\theoremstyle{plain}
\newtheorem{thm}{Theorem}[subsection]
\newtheorem{cor}[thm]{Corollary}
\newtheorem{lemma}[thm]{Lemma}
\newtheorem{rmk}[thm]{Remark}
\newtheorem{defs}[thm]{Definition}
\theoremstyle{remark}
\def\dated#1{\def\thedate{#1}}%
 \newdimen\xydashw@@
\newdimen\high%
\newdimen\ul%
\newdimen\wdth%
\def\ratchet#1#2{\ifnum#1<#2\global #1=#2\fi}%
\def\ifnextchar#1#2#3{\let\@tempe%
#1\def\@tempa{#2}\def\@tempb{#3}\futurelet%
    \@tempc\@ifnch}%
\def\@ifnch{\ifx \@tempc \@sptoken \let\@tempd\@xifnch%
      \else \ifx \@tempc \@tempe\let\@tempd\@tempa\else\let\@tempd\@tempb\fi%
      \fi \@tempd}%
\def\:{\let\@sptoken= } \:  
\def\:{\@xifnch} \expandafter\def\: {\futurelet\@tempc\@ifnch}%
\let\ifnextchar\@ifnextchar%
\newdimen\axis \axis=\fontdimen22\textfont2%
\def\scalefactor#1{\ul=#1\ul \X@xbase=#1\X@xbase \Y@ybase=#1\Y@ybase}%
\def\fontscale#1{%
\if#1h\relax%
\font\xydashfont=xydash10 scaled \magstephalf%
\font\xyatipfont=xyatip10 scaled \magstephalf%
\font\xybtipfont=xybtip10 scaled \magstephalf%
\font\xybsqlfont=xybsql10 scaled \magstephalf%
\font\xycircfont=xycirc10 scaled \magstephalf%
\else%
\font\xydashfont=xydash10 scaled \magstep#1%
\font\xyatipfont=xyatip10 scaled \magstep#1%
\font\xybtipfont=xybtip10 scaled \magstep#1%
\font\xybsqlfont=xybsql10 scaled \magstep#1%
\font\xycircfont=xycirc10 scaled \magstep#1%
\fi}%
\def\bfig{\vcenter\bgroup\xy}%
\def\efig{\endxy\egroup}%
\def\car#1#2\nil{#1}%
\def\morphism{\ifnextchar({\morphismp}{\morphismp(0,0)}}%
\def\morphismp(#1){\ifnextchar|{\morphismpp(#1)}{\morphismpp(#1)|a|}}%
\def\morphismpp(#1)|#2|{\ifnextchar/{\morphismppp(#1)|#2|}%
    {\morphismppp(#1)|#2|/>/}}%
\def\morphismppp(#1)|#2|/#3/{%
    \ifnextchar<{\morphismpppp(#1)|#2|/#3/}%
    {\morphismpppp(#1)|#2|/#3/<\default,0>}}%
\def\morphismpppp(#1,#2)|#3|/#4/<#5,#6>[#7`#8;#9]{%
\xend#1\advance \xend by #5%
\yend#2\advance \yend by #6%
\domorphism(#1,#2)|#3|/#4/<#5,#6>[{#7}`{#8};{#9}]}%
\def\domorphism(#1,#2)|#3|/#4/<#5,#6>[#7`#8;#9]{%
\def\next{\car#4.\nil}%
\if@\next\relax%
 \if#3l%
  \ifnum #6>0%
   \POS(#1,#2)*+!!<0ex,\axis>{#7}\ar#4^-{#9} (\xend,\yend)*+!!<0ex,\axis>{#8}%
  \else%
   \POS(#1,#2)*+!!<0ex,\axis>{#7}\ar#4_-{#9} (\xend,\yend)*+!!<0ex,\axis>{#8}%
  \fi%
 \else \if#3m%
    \setbox0\hbox{$#9$}%
   \ifdim \wd0=0pt%
     \POS(#1,#2)*+!!<0ex,\axis>{#7}\ar#4 (\xend,\yend)*+!!<0ex,\axis>{#8}%
   \else%
     \POS(#1,#2)*+!!<0ex,\axis>{#7}\ar#4|-*+<1pt,4pt>{\labelstyle#9}%
       (\xend,\yend)*+!!<0ex,\axis>{#8}%
   \fi%
 \else \if#3r%
  \ifnum #6<0%
   \POS(#1,#2)*+!!<0ex,\axis>{#7}\ar#4^-{#9} (\xend,\yend)*+!!<0ex,\axis>{#8}%
  \else%
   \POS(#1,#2)*+!!<0ex,\axis>{#7}\ar#4_-{#9} (\xend,\yend)*+!!<0ex,\axis>{#8}%
  \fi%
 \else \if#3a%
  \ifnum #5>0%
   \POS(#1,#2)*+!!<0ex,\axis>{#7}\ar#4^-{#9} (\xend,\yend)*+!!<0ex,\axis>{#8}%
  \else%
   \POS(#1,#2)*+!!<0ex,\axis>{#7}\ar#4_-{#9} (\xend,\yend)*+!!<0ex,\axis>{#8}%
  \fi%
 \else \if#3b%
  \ifnum #5<0%
   \POS(#1,#2)*+!!<0ex,\axis>{#7}\ar#4^-{#9} (\xend,\yend)*+!!<0ex,\axis>{#8}%
  \else%
   \POS(#1,#2)*+!!<0ex,\axis>{#7}\ar#4_-{#9} (\xend,\yend)*+!!<0ex,\axis>{#8}%
  \fi%
 \else%
   \POS(#1,#2)*+!!<0ex,\axis>{#7}\ar#4 (\xend,\yend)*+!!<0ex,\axis>{#8}%
 \fi\fi\fi\fi\fi%
\else%
 \if#3l%
  \ifnum #6>0%
   \POS(#1,#2)*+!!<0ex,\axis>{#7}\ar@{#4}^-{#9} (\xend,\yend)*+!!<0ex,\axis>{#8}%
  \else%
   \POS(#1,#2)*+!!<0ex,\axis>{#7}\ar@{#4}_-{#9} (\xend,\yend)*+!!<0ex,\axis>{#8}%
  \fi%
 \else \if#3m%
    \setbox0\hbox{$#9$}%
   \ifdim \wd0=0pt%
     \POS(#1,#2)*+!!<0ex,\axis>{#7}\ar@{#4} (\xend,\yend)*+!!<0ex,\axis>{#8}%
   \else%
     \POS(#1,#2)*+!!<0ex,\axis>{#7}\ar@{#4}|-*+<1pt,4pt>{\labelstyle#9}%
         (\xend,\yend)*+!!<0ex,\axis>{#8}%
   \fi%
 \else \if#3r%
  \ifnum #6<0%
   \POS(#1,#2)*+!!<0ex,\axis>{#7}\ar@{#4}^-{#9} (\xend,\yend)*+!!<0ex,\axis>{#8}%
  \else%
   \POS(#1,#2)*+!!<0ex,\axis>{#7}\ar@{#4}_-{#9} (\xend,\yend)*+!!<0ex,\axis>{#8}%
  \fi%
 \else \if#3a%
  \ifnum #5>0%
   \POS(#1,#2)*+!!<0ex,\axis>{#7}\ar@{#4}^-{#9} (\xend,\yend)*+!!<0ex,\axis>{#8}%
  \else%
   \POS(#1,#2)*+!!<0ex,\axis>{#7}\ar@{#4}_-{#9} (\xend,\yend)*+!!<0ex,\axis>{#8}%
  \fi%
 \else \if#3b%
  \ifnum #5<0%
   \POS(#1,#2)*+!!<0ex,\axis>{#7}\ar@{#4}^-{#9} (\xend,\yend)*+!!<0ex,\axis>{#8}%
  \else%
   \POS(#1,#2)*+!!<0ex,\axis>{#7}\ar@{#4}_-{#9} (\xend,\yend)*+!!<0ex,\axis>{#8}%
  \fi%
 \else%
   \POS(#1,#2)*+!!<0ex,\axis>{#7}\ar@{#4} (\xend,\yend)*+!!<0ex,\axis>{#8}%
 \fi\fi\fi\fi\fi%
\fi\ignorespaces}%
\def\vect(#1,#2)/#3/<#4,#5>{%
 \xend#1 \yend#2 \advance\xend by #4 \advance\yend by #5%
     \POS(#1,#2)\ar#3 (\xend,\yend)}%
\def\squarepppp(#1,#2)|#3|/#4`#5`#6`#7/<#8>[#9]{%
\xpos#1\ypos#2%
\def\next|##1##2##3##4|{%
 \def\xa{##1}\def\xb{##2}\def\xc{##3}\def\xd{##4}\ignorespaces}%
\next|#3|%
\def\next<##1,##2>{\deltax=##1\deltay=##2\ignorespaces}%
\next<#8>%
\def\next[##1`##2`##3`##4;##5`##6`##7`##8]{%
    \def\nodea{##1}\def\nodeb{##2}\def\nodec{##3}\def\noded{##4}%
    \def\labela{##5}\def\labelb{##6}\def\labelc{##7}\def\labeld{##8}\ignorespaces}%
\next[#9]%
\morphism(\xpos,\ypos)|\xd|/{#7}/<\deltax,0>[\nodec`\noded;\labeld]%
\advance \ypos by \deltay%
\morphism(\xpos,\ypos)|\xb|/{#5}/<0,-\deltay>[\nodea`\nodec;\labelb]%
\morphism(\xpos,\ypos)|\xa|/{#4}/<\deltax,0>[\nodea`\nodeb;\labela]%
 \advance \xpos by \deltax%
\morphism(\xpos,\ypos)|\xc|/{#6}/<0,-\deltay>[\nodeb`\noded;\labelc]%
\ignorespaces}%
\def\square{\ifnextchar({\squarep}{\squarep(0,0)}}%
\def\squarep(#1){\ifnextchar|{\squarepp(#1)}{\squarepp(#1)|alrb|}}%
\def\squarepp(#1)|#2|{\ifnextchar/{\squareppp(#1)|#2|}%
    {\squareppp(#1)|#2|/>`>`>`>/}}%
\def\squareppp(#1)|#2|/#3`#4`#5`#6/{%
    \ifnextchar<{\squarepppp(#1)|#2|/#3`#4`#5`#6/}%
    {\squarepppp(#1)|#2|/#3`#4`#5`#6/<\default,\default>}}%
\def\diamondpppp(#1,#2)|#3|/#4`#5`#6`#7/<#8>[#9]{%
\xpos#1\ypos#2%
\def\next|##1##2##3##4|{%
 \def\xa{##1}\def\xb{##2}\def\xc{##3}\def\xd{##4}\ignorespaces}%
\next|#3|%
\def\next<##1,##2>{\deltax=##1\deltay=##2\ignorespaces}%
\next<#8>%
\def\next[##1`##2`##3`##4;##5`##6`##7`##8]{%
    \def\nodea{##1}\def\nodeb{##2}\def\nodec{##3}\def\noded{##4}%
    \def\labela{##5}\def\labelb{##6}\def\labelc{##7}%
\def\labeld{##8}\ignorespaces}%
\next[#9]%
\advance\ypos\deltay
\morphism(\xpos,\ypos)|\xc|/{#6}/<\deltax,-\deltay>[\nodeb`\noded;\labelc]%
\advance\xpos \deltax
\advance\xpos \deltax
\morphism(\xpos,\ypos)|\xd|/{#7}/<-\deltax,-\deltay>[\nodec`\noded;\labeld]%
\advance\ypos\deltay \advance\xpos -\deltax
\morphism(\xpos,\ypos)|\xa|/{#4}/<-\deltax,-\deltay>[\nodea`\nodeb;\labela]%
\morphism(\xpos,\ypos)|\xb|/{#5}/<\deltax,-\deltay>[\nodea`\nodec;\labelb]%
}
\def\diamondp(#1){\ifnextchar|{\diamondpp(#1)}{\diamondpp(#1)|lrlr|}}%
\def\diamondpp(#1)|#2|{\ifnextchar/{\diamondppp(#1)|#2|}%
    {\diamondppp(#1)|#2|/>`>`>`>/}}%
\def\diamondppp(#1)|#2|/#3`#4`#5`#6/{%
    \ifnextchar<{\diamondpppp(#1)|#2|/#3`#4`#5`#6/}%
    {\diamondpppp(#1)|#2|/#3`#4`#5`#6/<400,400>}}%
\def\ptrianglepppp(#1,#2)|#3|/#4`#5`#6/<#7>[#8]{%
\xpos#1\ypos#2%
\def\next|##1##2##3|{\def\xa{##1}\def\xb{##2}\def\xc{##3}}%
\next|#3|%
\def\next<##1,##2>{\deltax=##1\deltay=##2\ignorespaces}%
\next<#7>%
\def\next[##1`##2`##3;##4`##5`##6]{%
    \def\nodea{##1}\def\nodeb{##2}\def\nodec{##3}%
    \def\labela{##4}\def\labelb{##5}\def\labelc{##6}}%
\next[#8]%
\advance\ypos by \deltay%
\morphism(\xpos,\ypos)|\xa|/{#4}/<\deltax,0>[\nodea`\nodeb;\labela]%
\morphism(\xpos,\ypos)|\xb|/{#5}/<0,-\deltay>[\nodea`\nodec;\labelb]%
\advance\xpos by \deltax%
\morphism(\xpos,\ypos)|\xc|/{#6}/<-\deltax,-\deltay>[\nodeb`\nodec;\labelc]%
\ignorespaces}%
\def\qtrianglepppp(#1,#2)|#3|/#4`#5`#6/<#7>[#8]{%
\xpos#1\ypos#2%
\def\next|##1##2##3|{\def\xa{##1}\def\xb{##2}\def\xc{##3}}%
\next|#3|%
\def\next<##1,##2>{\deltax=##1\deltay=##2\ignorespaces}%
\next<#7>%
\def\next[##1`##2`##3;##4`##5`##6]{%
    \def\nodea{##1}\def\nodeb{##2}\def\nodec{##3}%
    \def\labela{##4}\def\labelb{##5}\def\labelc{##6}}%
\next[#8]%
\advance\ypos by \deltay%
\morphism(\xpos,\ypos)|\xa|/{#4}/<\deltax,0>[\nodea`\nodeb;\labela]%
\morphism(\xpos,\ypos)|\xb|/{#5}/<\deltax,-\deltay>[\nodea`\nodec;\labelb]%
\advance\xpos by \deltax%
\morphism(\xpos,\ypos)|\xc|/{#6}/<0,-\deltay>[\nodeb`\nodec;\labelc]%
\ignorespaces}%
\def\dtrianglepppp(#1,#2)|#3|/#4`#5`#6/<#7>[#8]{%
\xpos#1\ypos#2%
\def\next|##1##2##3|{\def\xa{##1}\def\xb{##2}\def\xc{##3}}%
\next|#3|%
\def\next<##1,##2>{\deltax=##1\deltay=##2\ignorespaces}%
\next<#7>%
\def\next[##1`##2`##3;##4`##5`##6]{%
    \def\nodea{##1}\def\nodeb{##2}\def\nodec{##3}%
    \def\labela{##4}\def\labelb{##5}\def\labelc{##6}}%
\next[#8]%
\morphism(\xpos,\ypos)|\xc|/{#6}/<\deltax,0>[\nodeb`\nodec;\labelc]%
\advance\ypos by \deltay\advance \xpos by \deltax%
\morphism(\xpos,\ypos)|\xa|/{#4}/<-\deltax,-\deltay>[\nodea`\nodeb;\labela]%
\morphism(\xpos,\ypos)|\xb|/{#5}/<0,-\deltay>[\nodea`\nodec;\labelb]%
\ignorespaces}%
\def\btrianglepppp(#1,#2)|#3|/#4`#5`#6/<#7>[#8]{%
\xpos#1\ypos#2%
\def\next|##1##2##3|{\def\xa{##1}\def\xb{##2}\def\xc{##3}}%
\next|#3|%
\def\next<##1,##2>{\deltax=##1\deltay=##2\ignorespaces}%
\next<#7>%
\def\next[##1`##2`##3;##4`##5`##6]{%
    \def\nodea{##1}\def\nodeb{##2}\def\nodec{##3}%
    \def\labela{##4}\def\labelb{##5}\def\labelc{##6}}%
\next[#8]%
\morphism(\xpos,\ypos)|\xc|/{#6}/<\deltax,0>[\nodeb`\nodec;\labelc]%
\advance\ypos by \deltay%
\morphism(\xpos,\ypos)|\xa|/{#4}/<0,-\deltay>[\nodea`\nodeb;\labela]%
\morphism(\xpos,\ypos)|\xb|/{#5}/<\deltax,-\deltay>[\nodea`\nodec;\labelb]%
\ignorespaces}%
\def\Atrianglepppp(#1,#2)|#3|/#4`#5`#6/<#7>[#8]{%
\xpos#1\ypos#2%
\def\next|##1##2##3|{\def\xa{##1}\def\xb{##2}\def\xc{##3}}%
\next|#3|%
\def\next<##1,##2>{\deltax=##1\deltay=##2\ignorespaces}%
\next<#7>%
\def\next[##1`##2`##3;##4`##5`##6]{%
    \def\nodea{##1}\def\nodeb{##2}\def\nodec{##3}%
    \def\labela{##4}\def\labelb{##5}\def\labelc{##6}}%
\next[#8]%
\multiply\deltax by 2%
\morphism(\xpos,\ypos)|\xc|/{#6}/<\deltax,0>[\nodeb`\nodec;\labelc]%
\divide\deltax by 2%
\advance\ypos by \deltay\advance\xpos by \deltax%
\morphism(\xpos,\ypos)|\xa|/{#4}/<-\deltax,-\deltay>[\nodea`\nodeb;\labela]%
\morphism(\xpos,\ypos)|\xb|/{#5}/<\deltax,-\deltay>[\nodea`\nodec;\labelb]%
\ignorespaces}%
\def\Vtrianglepppp(#1,#2)|#3|/#4`#5`#6/<#7>[#8]{%
\xpos#1\ypos#2%
\def\next|##1##2##3|{\def\xa{##1}\def\xb{##2}\def\xc{##3}}%
\next|#3|%
\def\next<##1,##2>{\deltax=##1\deltay=##2\ignorespaces}%
\next<#7>%
\def\next[##1`##2`##3;##4`##5`##6]{%
    \def\nodea{##1}\def\nodeb{##2}\def\nodec{##3}%
    \def\labela{##4}\def\labelb{##5}\def\labelc{##6}}%
\next[#8]%
\advance\ypos by \deltay%
\morphism(\xpos,\ypos)|\xb|/{#5}/<\deltax,-\deltay>[\nodea`\nodec;\labelb]%
\multiply\deltax by 2%
\morphism(\xpos,\ypos)|\xa|/{#4}/<\deltax,0>[\nodea`\nodeb;\labela]%
\advance\xpos by \deltax \divide \deltax by 2%
\morphism(\xpos,\ypos)|\xc|/{#6}/<-\deltax,-\deltay>[\nodeb`\nodec;\labelc]%
\ignorespaces}%
\def\Ctrianglepppp(#1,#2)|#3|/#4`#5`#6/<#7>[#8]{%
\xpos#1\ypos#2%
\def\next|##1##2##3|{\def\xa{##1}\def\xb{##2}\def\xc{##3}}%
\next|#3|%
\def\next<##1,##2>{\deltax=##1\deltay=##2\ignorespaces}%
\next<#7>%
\def\next[##1`##2`##3;##4`##5`##6]{%
    \def\nodea{##1}\def\nodeb{##2}\def\nodec{##3}%
    \def\labela{##4}\def\labelb{##5}\def\labelc{##6}}%
\next[#8]%
\advance \ypos by \deltay%
\morphism(\xpos,\ypos)|\xc|/{#6}/<\deltax,-\deltay>[\nodeb`\nodec;\labelc]%
\advance\ypos by \deltay \advance \xpos by \deltax%
\morphism(\xpos,\ypos)|\xa|/{#4}/<-\deltax,-\deltay>[\nodea`\nodeb;\labela]%
\multiply\deltay by 2%
\morphism(\xpos,\ypos)|\xb|/{#5}/<0,-\deltay>[\nodea`\nodec;\labelb]%
\ignorespaces}%
\def\Dtrianglepppp(#1,#2)|#3|/#4`#5`#6/<#7>[#8]{%
\xpos#1\ypos#2%
\def\next|##1##2##3|{\def\xa{##1}\def\xb{##2}\def\xc{##3}}%
\next|#3|%
\def\next<##1,##2>{\deltax=##1\deltay=##2\ignorespaces}%
\next<#7>%
\def\next[##1`##2`##3;##4`##5`##6]{%
    \def\nodea{##1}\def\nodeb{##2}\def\nodec{##3}%
    \def\labela{##4}\def\labelb{##5}\def\labelc{##6}}%
\next[#8]%
\advance\xpos by \deltax \advance\ypos by \deltay%
\morphism(\xpos,\ypos)|\xc|/{#6}/<-\deltax,-\deltay>[\nodeb`\nodec;\labelc]%
\advance\xpos by -\deltax \advance\ypos by \deltay%
\morphism(\xpos,\ypos)|\xb|/{#5}/<\deltax,-\deltay>[\nodea`\nodeb;\labelb]%
\multiply \deltay by 2%
\morphism(\xpos,\ypos)|\xa|/{#4}/<0,-\deltay>[\nodea`\nodec;\labela]%
\ignorespaces}%
\def\ptrianglep(#1){\ifnextchar|{\ptrianglepp(#1)}{\ptrianglepp(#1)|alr|}}%
\def\ptrianglepp(#1)|#2|{\ifnextchar/{\ptriangleppp(#1)|#2|}%
    {\ptriangleppp(#1)|#2|/>`>`>/}}%
\def\ptriangleppp(#1)|#2|/#3`#4`#5/{%
    \ifnextchar<{\ptrianglepppp(#1)|#2|/#3`#4`#5/}%
    {\ptrianglepppp(#1)|#2|/#3`#4`#5/<\default,\default>}}%
\def\qtrianglep(#1){\ifnextchar|{\qtrianglepp(#1)}{\qtrianglepp(#1)|alr|}}%
\def\qtrianglepp(#1)|#2|{\ifnextchar/{\qtriangleppp(#1)|#2|}%
    {\qtriangleppp(#1)|#2|/>`>`>/}}%
\def\qtriangleppp(#1)|#2|/#3`#4`#5/{%
    \ifnextchar<{\qtrianglepppp(#1)|#2|/#3`#4`#5/}%
    {\qtrianglepppp(#1)|#2|/#3`#4`#5/<\default,\default>}}%
\def\dtrianglep(#1){\ifnextchar|{\dtrianglepp(#1)}{\dtrianglepp(#1)|lrb|}}%
\def\dtrianglepp(#1)|#2|{\ifnextchar/{\dtriangleppp(#1)|#2|}%
    {\dtriangleppp(#1)|#2|/>`>`>/}}%
\def\dtriangleppp(#1)|#2|/#3`#4`#5/{%
    \ifnextchar<{\dtrianglepppp(#1)|#2|/#3`#4`#5/}%
    {\dtrianglepppp(#1)|#2|/#3`#4`#5/<\default,\default>}}%
\def\btrianglep(#1){\ifnextchar|{\btrianglepp(#1)}{\btrianglepp(#1)|lrb|}}%
\def\btrianglepp(#1)|#2|{\ifnextchar/{\btriangleppp(#1)|#2|}%
    {\btriangleppp(#1)|#2|/>`>`>/}}%
\def\btriangleppp(#1)|#2|/#3`#4`#5/{%
    \ifnextchar<{\btrianglepppp(#1)|#2|/#3`#4`#5/}%
    {\btrianglepppp(#1)|#2|/#3`#4`#5/<\default,\default>}}%
\def\Atrianglep(#1){\ifnextchar|{\Atrianglepp(#1)}{\Atrianglepp(#1)|lrb|}}%
\def\Atrianglepp(#1)|#2|{\ifnextchar/{\Atriangleppp(#1)|#2|}%
    {\Atriangleppp(#1)|#2|/>`>`>/}}%
\def\Atriangleppp(#1)|#2|/#3`#4`#5/{%
    \ifnextchar<{\Atrianglepppp(#1)|#2|/#3`#4`#5/}%
    {\Atrianglepppp(#1)|#2|/#3`#4`#5/<\default,\default>}}%
\def\Vtrianglep(#1){\ifnextchar|{\Vtrianglepp(#1)}{\Vtrianglepp(#1)|alb|}}%
\def\Vtrianglepp(#1)|#2|{\ifnextchar/{\Vtriangleppp(#1)|#2|}%
    {\Vtriangleppp(#1)|#2|/>`>`>/}}%
\def\Vtriangleppp(#1)|#2|/#3`#4`#5/{%
    \ifnextchar<{\Vtrianglepppp(#1)|#2|/#3`#4`#5/}%
    {\Vtrianglepppp(#1)|#2|/#3`#4`#5/<\default,\default>}}%
\def\Ctrianglep(#1){\ifnextchar|{\Ctrianglepp(#1)}{\Ctrianglepp(#1)|arb|}}%
\def\Ctrianglepp(#1)|#2|{\ifnextchar/{\Ctriangleppp(#1)|#2|}%
    {\Ctriangleppp(#1)|#2|/>`>`>/}}%
\def\Ctriangleppp(#1)|#2|/#3`#4`#5/{%
    \ifnextchar<{\Ctrianglepppp(#1)|#2|/#3`#4`#5/}%
    {\Ctrianglepppp(#1)|#2|/#3`#4`#5/<\default,\default>}}%
\def\Dtrianglep(#1){\ifnextchar|{\Dtrianglepp(#1)}{\Dtrianglepp(#1)|lab|}}%
\def\Dtrianglepp(#1)|#2|{\ifnextchar/{\Dtriangleppp(#1)|#2|}%
    {\Dtriangleppp(#1)|#2|/>`>`>/}}%
\def\Dtriangleppp(#1)|#2|/#3`#4`#5/{%
    \ifnextchar<{\Dtrianglepppp(#1)|#2|/#3`#4`#5/}%
    {\Dtrianglepppp(#1)|#2|/#3`#4`#5/<\default,\default>}}%
\def\Atrianglepairpppp(#1)|#2|/#3`#4`#5`#6`#7/<#8>[#9]{%
\def\next(##1,##2){\xpos##1\ypos##2}%
\next(#1)%
\def\next|##1##2##3##4##5|{\def\xa{##1}\def\xb{##2}%
\def\xc{##3}\def\xd{##4}\def\xe{##5}}%
\next|#2|%
\def\next<##1,##2>{\deltax=##1\deltay=##2\ignorespaces}%
\next<#8>%
\def\next[##1`##2`##3`##4;##5`##6`##7`##8`##9]{%
 \def\nodea{##1}\def\nodeb{##2}\def\nodec{##3}\def\noded{##4}%
 \def\labela{##5}\def\labelb{##6}\def\labelc{##7}\def\labeld{##8}\def\labele{##9}}%
\next[#9]%
\morphism(\xpos,\ypos)|\xd|/{#6}/<\deltax,0>[\nodeb`\nodec;\labeld]%
\advance\xpos by \deltax%
\morphism(\xpos,\ypos)|\xe|/{#7}/<\deltax,0>[\nodec`\noded;\labele]%
\advance\ypos by \deltay%
\morphism(\xpos,\ypos)|\xa|/{#3}/<-\deltax,-\deltay>[\nodea`\nodeb;\labela]%
\morphism(\xpos,\ypos)|\xb|/{#4}/<0,-\deltay>[\nodea`\nodec;\labelb]%
\morphism(\xpos,\ypos)|\xc|/{#5}/<\deltax,-\deltay>[\nodea`\noded;\labelc]%
\ignorespaces}%
\def\Vtrianglepairpppp(#1)|#2|/#3`#4`#5`#6`#7/<#8>[#9]{%
\def\next(##1,##2){\xpos##1\ypos##2}%
\next(#1)%
\def\next|##1##2##3##4##5|{\def\xa{##1}\def\xb{##2}%
\def\xc{##3}\def\xd{##4}\def\xe{##5}}%
\next|#2|%
\def\next<##1,##2>{\deltax=##1\deltay=##2\ignorespaces}%
\next<#8>%
\def\next[##1`##2`##3`##4;##5`##6`##7`##8`##9]{%
 \def\nodea{##1}\def\nodeb{##2}\def\nodec{##3}\def\noded{##4}%
 \def\labela{##5}\def\labelb{##6}\def\labelc{##7}\def\labeld{##8}\def\labele{##9}}%
\next[#9]%
\advance\ypos by \deltay%
\morphism(\xpos,\ypos)|\xa|/{#3}/<\deltax,0>[\nodea`\nodeb;\labela]%
\morphism(\xpos,\ypos)|\xc|/{#5}/<\deltax,-\deltay>[\nodea`\noded;\labelc]%
\advance\xpos by \deltax%
\morphism(\xpos,\ypos)|\xb|/{#4}/<\deltax,0>[\nodeb`\nodec;\labelb]%
\morphism(\xpos,\ypos)|\xd|/{#6}/<0,-\deltay>[\nodeb`\noded;\labeld]%
\advance\xpos by \deltax%
\morphism(\xpos,\ypos)|\xe|/{#7}/<-\deltax,-\deltay>[\nodec`\noded;\labele]%
\ignorespaces}%
\def\Ctrianglepairpppp(#1)|#2|/#3`#4`#5`#6`#7/<#8>[#9]{%
\def\next(##1,##2){\xpos##1\ypos##2}%
\next(#1)%
\def\next|##1##2##3##4##5|{\def\xa{##1}\def\xb{##2}%
\def\xc{##3}\def\xd{##4}\def\xe{##5}}%
\next|#2|%
\def\next<##1,##2>{\deltax=##1\deltay=##2\ignorespaces}%
\next<#8>%
\def\next[##1`##2`##3`##4;##5`##6`##7`##8`##9]{%
 \def\nodea{##1}\def\nodeb{##2}\def\nodec{##3}\def\noded{##4}%
 \def\labela{##5}\def\labelb{##6}\def\labelc{##7}\def\labeld{##8}\def\labele{##9}}%
\next[#9]%
\advance\ypos by \deltay%
\morphism(\xpos,\ypos)|\xe|/{#7}/<0,-\deltay>[\nodec`\noded;\labele]%
\advance\xpos by -\deltax%
\morphism(\xpos,\ypos)|\xc|/{#5}/<\deltax,0>[\nodeb`\nodec;\labelc]%
\morphism(\xpos,\ypos)|\xd|/{#6}/<\deltax,-\deltay>[\nodeb`\noded;\labeld]%
\advance\ypos by \deltay%
\advance\xpos by \deltax%
\morphism(\xpos,\ypos)|\xa|/{#3}/<-\deltax,-\deltay>[\nodea`\nodeb;\labela]%
\morphism(\xpos,\ypos)|\xb|/{#4}/<0,-\deltay>[\nodea`\nodec;\labelb]%
\ignorespaces}%
\def\Dtrianglepairpppp(#1)|#2|/#3`#4`#5`#6`#7/<#8>[#9]{%
\def\next(##1,##2){\xpos##1\ypos##2}%
\next(#1)%
\def\next|##1##2##3##4##5|{\def\xa{##1}\def\xb{##2}%
\def\xc{##3}\def\xd{##4}\def\xe{##5}}%
\next|#2|%
\def\next<##1,##2>{\deltax=##1\deltay=##2\ignorespaces}%
\next<#8>%
\def\next[##1`##2`##3`##4;##5`##6`##7`##8`##9]{%
 \def\nodea{##1}\def\nodeb{##2}\def\nodec{##3}\def\noded{##4}%
 \def\labela{##5}\def\labelb{##6}\def\labelc{##7}\def\labeld{##8}\def\labele{##9}}%
\next[#9]%
\advance\ypos by \deltay%
\morphism(\xpos,\ypos)|\xc|/{#5}/<\deltax,0>[\nodeb`\nodec;\labelc]%
\morphism(\xpos,\ypos)|\xd|/{#6}/<0,-\deltay>[\nodeb`\noded;\labeld]%
\advance\ypos by \deltay%
\morphism(\xpos,\ypos)|\xa|/{#3}/<0,-\deltay>[\nodea`\nodeb;\labela]%
\morphism(\xpos,\ypos)|\xb|/{#4}/<\deltax,-\deltay>[\nodea`\nodec;\labelb]%
\advance\ypos by -\deltay%
\advance\xpos by \deltax%
\morphism(\xpos,\ypos)|\xe|/{#7}/<-\deltax,-\deltay>[\nodec`\noded;\labele]%
\ignorespaces}%
\def\Atrianglepairp(#1){\ifnextchar|{\Atrianglepairpp(#1)}%
{\Atrianglepairpp(#1)|lmrbb|}}%
\def\Atrianglepairpp(#1)|#2|{\ifnextchar/{\Atrianglepairppp(#1)|#2|}%
    {\Atrianglepairppp(#1)|#2|/>`>`>`>`>/}}%
\def\Atrianglepairppp(#1)|#2|/#3`#4`#5`#6`#7/{%
    \ifnextchar<{\Atrianglepairpppp(#1)|#2|/#3`#4`#5`#6`#7/}%
    {\Atrianglepairpppp(#1)|#2|/#3`#4`#5`#6`#7/<\default,\default>}}%
\def\Vtrianglepairp(#1){\ifnextchar|{\Vtrianglepairpp(#1)}%
{\Vtrianglepairpp(#1)|aalmr|}}%
\def\Vtrianglepairpp(#1)|#2|{\ifnextchar/{\Vtrianglepairppp(#1)|#2|}%
    {\Vtrianglepairppp(#1)|#2|/>`>`>`>`>/}}%
\def\Vtrianglepairppp(#1)|#2|/#3`#4`#5`#6`#7/{%
    \ifnextchar<{\Vtrianglepairpppp(#1)|#2|/#3`#4`#5`#6`#7/}%
    {\Vtrianglepairpppp(#1)|#2|/#3`#4`#5`#6`#7/<\default,\default>}}%
\def\Ctrianglepairp(#1){\ifnextchar|{\Ctrianglepairpp(#1)}%
{\Ctrianglepairpp(#1)|lrmlr|}}%
\def\Ctrianglepairpp(#1)|#2|{\ifnextchar/{\Ctrianglepairppp(#1)|#2|}%
    {\Ctrianglepairppp(#1)|#2|/>`>`>`>`>/}}%
\def\Ctrianglepairppp(#1)|#2|/#3`#4`#5`#6`#7/{%
    \ifnextchar<{\Ctrianglepairpppp(#1)|#2|/#3`#4`#5`#6`#7/}%
    {\Ctrianglepairpppp(#1)|#2|/#3`#4`#5`#6`#7/<\default,\default>}}%
\def\Dtrianglepairp(#1){\ifnextchar|{\Dtrianglepairpp(#1)}%
{\Dtrianglepairpp(#1)|lrmlr|}}%
\def\Dtrianglepairpp(#1)|#2|{\ifnextchar/{\Dtrianglepairppp(#1)|#2|}%
    {\Dtrianglepairppp(#1)|#2|/>`>`>`>`>/}}%
\def\Dtrianglepairppp(#1)|#2|/#3`#4`#5`#6`#7/{%
    \ifnextchar<{\Dtrianglepairpppp(#1)|#2|/#3`#4`#5`#6`#7/}%
    {\Dtrianglepairpppp(#1)|#2|/#3`#4`#5`#6`#7/<\default,\default>}}%
\def\pplace[#1](#2,#3)[#4]{\POS(#2,#3)*+!!<0ex,\axis>!#1{#4}\ignorespaces}%
\def\cplace(#1,#2)[#3]{\POS(#1,#2)*+!!<0ex,\axis>{#3}\ignorespaces}%
\def\place{\ifnextchar[{\pplace}{\cplace}}%
\def\pullback#1]#2]{\square#1]\trident#2]\ignorespaces}%
\def\tridentppp|#1#2#3|/#4`#5`#6/<#7,#8>[#9]{%
\def\next[##1;##2`##3`##4]{\def\nodee{##1}\def\labele{##2}%
   \def\labelf{##3}\def\labelg{##4}}%
\next[#9]%
\advance \xpos by -\deltax%
\advance \xpos by -#7\advance \ypos by #8%
\advance\deltax by #7%
\morphism(\xpos,\ypos)|#1|/{#4}/<\deltax,-#8>[\nodee`\nodeb;\labele]%
\advance\deltax by -#7%
\morphism(\xpos,\ypos)|#2|/{#5}/<#7,-#8>[\nodee`\nodea;\labelf]%
\advance\deltay by #8%
\morphism(\xpos,\ypos)|#3|/{#6}/<#7,-\deltay>[\nodee`\nodec;\labelg]%
\ignorespaces}%
\def\trident{\ifnextchar|{\tridentp}{\tridentp|amb|}}%
\def\tridentp|#1|{\ifnextchar/{\tridentpp|#1|}{\tridentpp|#1|/{>}`{>}`{>}/}}%
\def\tridentpp|#1|/#2/{\ifnextchar<{\tridentppp|#1|/#2/}%
  {\tridentppp|#1|/#2/<500,500>}}%
\def\setmorphismwidth#1#2#3#4{%
 \setbox0=\hbox{$#1{\labelstyle#3#3}#2$}#4=\wd0%
 \divide #4 by 2 \divide #4 by \ul%
 \advance #4 by 350 \ratchet{#4}{500}}%
\def\setSquarewidth[#1`#2`#3`#4;#5`#6`#7`#8]{%
 \setmorphismwidth{#1}{#2}{#5}{\topw}%
 \setmorphismwidth{#3}{#4}{#8}{\botw}%
\ratchet{\topw}{\botw}}%
\def\Squarepppp(#1)|#2|/#3/<#4>[#5]{%
 \setSquarewidth[#5]%
 \squarepppp(#1)|#2|/#3/<\topw,#4>[#5]%
\ignorespaces}%
\def\Squarep(#1){\ifnextchar|{\Squarepp(#1)}{\Squarepp(#1)|alrb|}}%
\def\Squarepp(#1)|#2|{\ifnextchar/{\Squareppp(#1)|#2|}%
    {\Squareppp(#1)|#2|/>`>`>`>/}}%
\def\Squareppp(#1)|#2|/#3`#4`#5`#6/{%
    \ifnextchar<{\Squarepppp(#1)|#2|/#3`#4`#5`#6/}%
    {\Squarepppp(#1)|#2|/#3`#4`#5`#6/<\default>}}%
\def\hsquarespppp(#1,#2)|#3|/#4/<#5>[#6;#7]{%
\Xpos=#1\Ypos=#2%
\def\next|##1##2##3##4##5##6##7|{%
 \def\Xa{##1}\def\Xb{##2}\def\Xc{##3}\def\Xd{##4}%
 \def\Xe{##5}\def\Xf{##6}\def\Xg{##7}}%
\next|#3|%
\def\next<##1,##2,##3>{\deltaX=##1\deltaXprime=##2\deltaY=##3}%
\next<#5>%
\def\next[##1`##2`##3`##4`##5`##6]{%
 \def\Nodea{##1}\def\Nodeb{##2}\def\Nodec{##3}%
 \def\Noded{##4}\def\Nodee{##5}\def\Nodef{##6}}%
\next[#6]%
\def\next[##1`##2`##3`##4`##5`##6`##7]{%
 \def\Labela{##1}\def\Labelb{##2}\def\Labelc{##3}\def\Labeld{##4}%
 \def\Labele{##5}\def\Labelf{##6}\def\Labelg{##7}}%
\next[#7]%
\dohsquares/#4/}%
\def\dohsquares/#1`#2`#3`#4`#5`#6`#7/{%
\squarepppp(\Xpos,\Ypos)|\Xa\Xc\Xd\Xf|/#1`#3`#4`#6/<\deltaX,\deltaY>%
 [\Nodea`\Nodeb`\Noded`\Nodee;\Labela`\Labelc`\Labeld`\Labelf]%
 \advance \Xpos by \deltaX%
\squarepppp(\Xpos,\Ypos)|\Xb\Xd\Xe\Xg|/#2``#5`#7/<\deltaXprime,\deltaY>%
[\Nodeb`\Nodec`\Nodee`\Nodef;\Labelb``\Labele`\Labelg]%
\ignorespaces}%
\def\hsquaresp(#1){\ifnextchar|{\hsquarespp(#1)}{\hsquarespp%
(#1)|aalmrbb|}}%
\def\hsquarespp(#1)|#2|{\ifnextchar/{\hsquaresppp(#1)|#2|}%
    {\hsquaresppp(#1)|#2|/>`>`>`>`>`>`>/}}%
\def\hsquaresppp(#1)|#2|/#3/{%
    \ifnextchar<{\hsquarespppp(#1)|#2|/#3/}%
    {\hsquarespppp(#1)|#2|/#3/<\default,\default,\default>}}%
\def\hSquarespppp(#1,#2)|#3|/#4/<#5>[#6;#7]{%
\Xpos=#1\Ypos=#2%
\def\next|##1##2##3##4##5##6##7|{%
 \def\Xa{##1}\def\Xb{##2}\def\Xc{##3}\def\Xd{##4}%
 \def\Xe{##5}\def\Xf{##6}\def\Xg{##7}}%
\next|#3|%
\deltaY=#5%
\def\next[##1`##2`##3`##4`##5`##6]{%
 \def\Nodea{##1}\def\Nodeb{##2}\def\Nodec{##3}%
 \def\Noded{##4}\def\Nodee{##5}\def\Nodef{##6}}%
\next[#6]%
\def\next[##1`##2`##3`##4`##5`##6`##7]{%
 \def\Labela{##1}\def\Labelb{##2}\def\Labelc{##3}\def\Labeld{##4}%
 \def\Labele{##5}\def\Labelf{##6}\def\Labelg{##7}}%
\next[#7]%
\dohSquares/#4/}%
\def\dohSquares/#1`#2`#3`#4`#5`#6`#7/{%
\Squarepppp(\Xpos,\Ypos)|\Xa\Xc\Xd\Xf|/#1`#3`#4`#6/<\deltaY>%
 [\Nodea`\Nodeb`\Noded`\Nodee;\Labela`\Labelc`\Labeld`\Labelf]%
 \advance \Xpos by \topw%
\Squarepppp(\Xpos,\Ypos)|\Xb\Xd\Xe\Xg|/#2``#5`#7/<\deltaY>%
[\Nodeb`\Nodec`\Nodee`\Nodef;\Labelb``\Labele`\Labelg]%
\ignorespaces}%
\def\hSquaresp(#1){\ifnextchar|{\hSquarespp(#1)}{\hSquarespp%
(#1)|aalmrbb|}}%
\def\hSquarespp(#1)|#2|{\ifnextchar/{\hSquaresppp(#1)|#2|}%
    {\hSquaresppp(#1)|#2|/>`>`>`>`>`>`>/}}%
\def\hSquaresppp(#1)|#2|/#3/{%
    \ifnextchar<{\hSquarespppp(#1)|#2|/#3/}%
    {\hSquarespppp(#1)|#2|/#3/<\default>}}%
\def\vsquarespppp(#1,#2)|#3|/#4/<#5>[#6;#7]{%
\Xpos=#1\Ypos=#2%
\def\next|##1##2##3##4##5##6##7|{%
 \def\Xa{##1}\def\Xb{##2}\def\Xc{##3}\def\Xd{##4}%
 \def\Xe{##5}\def\Xf{##6}\def\Xg{##7}}%
\next|#3|%
\def\next<##1,##2,##3>{\deltaX=##1\deltaY=##2\deltaYprime=##3}%
\next<#5>%
\def\next[##1`##2`##3`##4`##5`##6]{%
 \def\Nodea{##1}\def\Nodeb{##2}\def\Nodec{##3}%
 \def\Noded{##4}\def\Nodee{##5}\def\Nodef{##6}}%
\next[#6]%
\def\next[##1`##2`##3`##4`##5`##6`##7]{%
 \def\Labela{##1}\def\Labelb{##2}\def\Labelc{##3}\def\Labeld{##4}%
 \def\Labele{##5}\def\Labelf{##6}\def\Labelg{##7}}%
\next[#7]%
\dovsquares/#4/}%
\def\dovsquares/#1`#2`#3`#4`#5`#6`#7/{%
\squarepppp(\Xpos,\Ypos)|\Xd\Xe\Xf\Xg|/`#5`#6`#7/<\deltaX,\deltaYprime>%
[\Nodec`\Noded`\Nodee`\Nodef;`\Labele`\Labelf`\Labelg]%
 \advance\Ypos by \deltaYprime%
\squarepppp(\Xpos,\Ypos)|\Xa\Xb\Xc\Xd|/#1`#2`#3`#4/<\deltaX,\deltaY>%
 [\Nodea`\Nodeb`\Nodec`\Noded;\Labela`\Labelb`\Labelc`\Labeld]%
\ignorespaces}%
\def\vsquaresp(#1){\ifnextchar|{\vsquarespp(#1)}{\vsquarespp%
(#1)|aalmrbb|}}%
\def\vsquarespp(#1)|#2|{\ifnextchar/{\vsquaresppp(#1)|#2|}%
    {\vsquaresppp(#1)|#2|/>`>`>`>`>`>`>/}}%
\def\vsquaresppp(#1)|#2|/#3/{%
    \ifnextchar<{\vsquarespppp(#1)|#2|/#3/}%
    {\vsquarespppp(#1)|#2|/#3/<\default,\default,\default>}}%
\def\vSquarespppp(#1,#2)|#3|/#4/<#5,#6>[#7;#8]{%
\Xpos=#1\Ypos=#2%
\def\next|##1##2##3##4##5##6##7|{%
 \def\Xa{##1}\def\Xb{##2}\def\Xc{##3}\def\Xd{##4}%
 \def\Xe{##5}\def\Xf{##6}\def\Xg{##7}}%
\next|#3|%
\deltaX=#5%
\deltaY=#6%
\def\next[##1`##2`##3`##4`##5`##6]{%
 \def\Nodea{##1}\def\Nodeb{##2}\def\Nodec{##3}%
 \def\Noded{##4}\def\Nodee{##5}\def\Nodef{##6}}%
\next[#7]%
\def\next[##1`##2`##3`##4`##5`##6`##7]{%
 \def\Labela{##1}\def\Labelb{##2}\def\Labelc{##3}\def\Labeld{##4}%
 \def\Labele{##5}\def\Labelf{##6}\def\Labelg{##7}}%
\next[#8]%
\dovSquares/#4/\ignorespaces}%
\def\dovSquares/#1`#2`#3`#4`#5`#6`#7/{%
\setmorphismwidth{\Nodea}{\Nodeb}{\Labela}{\topw}%
\setmorphismwidth{\Nodec}{\Noded}{\Labeld}{\botw}%
\ratchet{\topw}{\botw}%
\setmorphismwidth{\Nodee}{\Nodef}{\Labelg}{\botw}%
\ratchet{\topw}{\botw}%
\square(\Xpos,\Ypos)|\Xd\Xe\Xf\Xg|/`#5`#6`#7/<\topw,\deltaY>%
 [\Nodec`\Noded`\Nodee`\Nodef;`\Labele`\Labelf`\Labelg]%
\advance \Ypos by \deltaY%
\square(\Xpos,\Ypos)|\Xa\Xb\Xc\Xd|/#1`#2`#3`#4/<\topw,\deltaX>%
 [\Nodea`\Nodeb`\Nodec`\Noded;\Labela`\Labelb`\Labelc`\Labeld]%
}%
\def\vSquaresp(#1){\ifnextchar|{\vSquarespp(#1)}{\vSquarespp%
(#1)|alrmlrb|}}%
\def\vSquarespp(#1)|#2|{\ifnextchar/{\vSquaresppp(#1)|#2|}%
    {\vSquaresppp(#1)|#2|/>`>`>`>`>`>`>/}}%
\def\vSquaresppp(#1)|#2|/#3/{%
    \ifnextchar<{\vSquarespppp(#1)|#2|/#3/}%
    {\vSquarespppp(#1)|#2|/#3/<\default,\default>}}%
\def\osquarepppp(#1)|#2|/#3`#4`#5`#6/<#7>[#8]{\squarepppp%
 (#1)|#2|/#3`#4`#5`#6/<#7>[#8]%
 \let\Nodea\nodea\let\Nodeb\nodeb%
\let\Nodec\nodec\let\Noded\noded\Xpos=\xpos\Ypos=\ypos%
\deltaX=\deltax \deltaY=\deltay \isquare}%
\def\osquarep(#1){\ifnextchar|{\osquarepp(#1)}{\osquarepp(#1)|alrb|}}%
\def\osquarepp(#1)|#2|{\ifnextchar/{\osquareppp(#1)|#2|}%
    {\osquareppp(#1)|#2|/>`>`>`>/}}%
\def\osquareppp(#1)|#2|/#3`#4`#5`#6/{%
    \ifnextchar<{\osquarepppp(#1)|#2|/#3`#4`#5`#6/}%
    {\osquarepppp(#1)|#2|/#3`#4`#5`#6/<1500,1500>}}%
\def\isquarepppp(#1)|#2|/#3`#4`#5`#6/<#7>[#8]{%
 \squarepppp(#1)|#2|/#3`#4`#5`#6/<#7>[#8]%
\ifnextchar|{\cubep}{\cubep|mmmm|}}%
\def\cubep|#1|{\ifnextchar/{\cubepp|#1|}{\cubepp|#1|/>`>`>`>/}}%
\def\isquare{\ifnextchar({\isquarep}{\isquarep(\default,\default)}}%
\def\isquarep(#1){\ifnextchar|{\isquarepp(#1)}{\isquarepp(#1)|alrb|}}%
\def\isquarepp(#1)|#2|{\ifnextchar/{\isquareppp(#1)|#2|}%
    {\isquareppp(#1)|#2|/>`>`>`>/}}%
\def\isquareppp(#1)|#2|/#3`#4`#5`#6/{%
    \ifnextchar<{\isquarepppp(#1)|#2|/#3`#4`#5`#6/}%
    {\isquarepppp(#1)|#2|/#3`#4`#5`#6/<500,500>}}%
\def\cubepp|#1#2#3#4|/#5`#6`#7`#8/[#9]{%
\def\next[##1`##2`##3`##4]{\gdef\Labela{##1}%
\gdef\Labelb{##2}\gdef\Labelc{##3}\gdef\Labeld{##4}}\next[#9]%
\xend\xpos \yend\ypos%
\Xend\xend\advance\Xend by -\Xpos%
\Yend\yend\advance\Yend by -\Ypos%
\domorphism(\Xpos,\Ypos)|#2|/#6/<\Xend,\Yend>[\Nodeb`\nodeb;\Labelb]%
\advance\Xpos by-\deltaX%
\advance\xend by-\deltax%
\Xend\xend\advance\Xend by -\Xpos%
\domorphism(\Xpos,\Ypos)|#1|/#5/<\Xend,\Yend>[\Nodea`\nodea;\Labela]%
\advance\Ypos by-\deltaY%
\advance\yend by-\deltay%
\Yend\yend\advance\Yend by -\Ypos%
\domorphism(\Xpos,\Ypos)|#3|/#7/<\Xend,\Yend>[\Nodec`\nodec;\Labelc]%
\advance\Xpos by\deltaX%
\advance\xend by\deltax%
\Xend\xend\advance\Xend by -\Xpos%
\domorphism(\Xpos,\Ypos)|#4|/#8/<\Xend,\Yend>[\Noded`\noded;\Labeld]%
\ignorespaces}%
\def\setwdth#1#2{\setbox0\hbox{$\labelstyle#1$}\wdth=\wd0%
\setbox0\hbox{$\labelstyle#2$}\ifnum\wdth<\wd0 \wdth=\wd0 \fi}%
\def\topppp/#1/<#2>^#3_#4{\:%
\ifnum#2=0%
   \setwdth{#3}{#4}\deltax=\wdth \divide \deltax by \ul%
   \advance \deltax by \defaultmargin  \ratchet{\deltax}{100}%
\else \deltax #2%
\fi%
\xy\ar@{#1}^{#3}_{#4}(\deltax,0) \endxy%
\:}%
\def\toppp/#1/<#2>^#3{\ifnextchar_{\topppp/#1/<#2>^{#3}}{\topppp/#1/<#2>^{#3}_{}}}%
\def\topp/#1/<#2>{\ifnextchar^{\toppp/#1/<#2>}{\toppp/#1/<#2>^{}}}%
\def\toop/#1/{\ifnextchar<{\topp/#1/}{\topp/#1/<0>}}%
\def\to{\ifnextchar/{\toop}{\toop/>/}}%
\def\rlimto{{%
\font\xyatipfont=xyatip10 scaled 800
\font\xybtipfont=xybtip10 scaled 800
\raise 2pt\hbox{\,\xy\ar@{->}(100,0) \endxy}\,}}
\def\llimto{{%
\font\xyatipfont=xyatip10 scaled 800
\font\xybtipfont=xybtip10 scaled 800
\raise 2pt\hbox{\,\xy\ar@{<-}(100,0) \endxy}\,}}
\def\twopppp/#1`#2/<#3>^#4_#5{\:%
\ifnum0=#3%
  \setwdth{#4}{#5}\deltax=\wdth \divide \deltax by \ul \advance \deltax%
  by \defaultmargin \ratchet{\deltax}{200}%
\else \deltax#3 \fi%
\xy\ar@{#1}@<2.5pt>^{#4}(\deltax,0)%
\ar@{#2}@<-2.5pt>_{#5}(\deltax,0)\endxy\:}%
\def\twoppp/#1`#2/<#3>^#4{\ifnextchar_{\twopppp/#1`#2/<#3>^{#4}}%
  {\twopppp/#1`#2/<#3>^{#4}_{}}}%
\def\twopp/#1`#2/<#3>{\ifnextchar^{\twoppp/#1`#2/<#3>}{\twoppp/#1`#2/<#3>^{}}}%
\def\twop/#1`#2/{\ifnextchar<{\twopp/#1`#2/}{\twopp/#1`#2/<0>}}%
\def\threeppppp/#1`#2`#3/<#4>^#5|#6_#7{\:%
\ifnum0=#4%
\setbox0\hbox{$\labelstyle#5$}\wdth=\wd0%
\setbox0\hbox{$\labelstyle#6$}\ifnum\wdth<\wd0 \wdth=\wd0 \fi%
\setbox0\hbox{$\labelstyle#7$}\ifnum\wdth<\wd0 \wdth=\wd0 \fi%
\deltax=\wdth \divide \deltax by \ul \advance \deltax by%
\defaultmargin \ratchet{\deltax}{300}%
\else\deltax#4 \fi%
    \xy \ifnum\wd0=0 \ar@{#2}(\deltax,0)%
    \else \ar@{#2}|{#6}(\deltax,0)\fi%
\ar@{#1}@<4.5pt>^{#5}(\deltax,0)%
\ar@{#3}@<-4.5pt>_{#7}(\deltax,0)\endxy\:}%
\def\threepppp/#1`#2`#3/<#4>^#5|#6{\ifnextchar_{\threeppppp%
  /#1`#2`#3/<#4>^{#5}|{#6}}{\threeppppp/#1`#2`#3/<#4>^{#5}|{#6}_{}}}%
\def\threeppp/#1`#2`#3/<#4>^#5{\ifnextchar|{\threepppp%
  /#1`#2`#3/<#4>^{#5}}{\threepppp/#1`#2`#3/<#4>^{#5}|{}}}%
\def\threepp/#1`#2`#3/<#4>{\ifnextchar^{\threeppp/#1`#2`#3/<#4>}%
  {\threeppp/#1`#2`#3/<#4>^{}}}%
\def\threep/#1`#2`#3/{\ifnextchar<{\threepp/#1`#2`#3/}%
  {\threepp/#1`#2`#3/<0>}}%
\def\twoar(#1,#2){{%
 \scalefactor{0.1}%
 \deltax#1\deltay#2%
 \deltaX=\ifnum\deltax<0-\fi\deltax%
 \deltaY=\ifnum\deltay<0-\fi\deltay%
 \Xend\deltax \multiply \Xend by \deltax%
 \Yend\deltay \multiply \Yend by \deltay%
 \advance\Xend by \Yend \multiply \Xend by 3%
 \ifnum \deltaX > \deltaY%
    \multiply \deltaX by 3 \advance \deltaX by \deltaY%
 \else%
    \multiply \deltaY by 3 \advance \deltaX by \deltaY%
 \fi%
 \multiply\deltax by 500%
 \multiply\deltay by 500%
 \xpos\deltax \multiply \xpos by 3 \divide\xpos by \deltaX%
 \Xpos\deltax \multiply \Xpos by \deltaX \divide \Xpos by \Xend%
 \advance \xpos by \Xpos%
 \ypos\deltay \multiply \ypos by 3 \divide\ypos by \deltaX%
 \Ypos\deltay \multiply \Ypos by \deltaX \divide \Ypos by \Xend%
 \advance \ypos by \Ypos%
 \xy \ar@{=>}(\xpos,\ypos) \endxy%
}\ignorespaces}%
\def\iiixiiipppppp(#1,#2)|#3|/#4/<#5>#6<#7>[#8;#9]{%
 \xpos#1\ypos#2\relax%
 \def\next|##1##2##3##4##5##6##7|{\def\xa{##1}\def\xb{##2}%
 \def\xc{##3}\def\xd{##4}\def\xe{##5}\def\xf{##6}\nextt|##7|}%
 \def\nextt|##1##2##3##4##5##6|{\def\xg{##1}\def\xh{##2}%
 \def\xi{##3}\def\xj{##4}\def\xk{##5}\def\xl{##6}}%
 \next|#3|%
 \def\next<##1,##2>{\deltax##1\deltay##2}%
 \next<#5>%
 \def\next<##1,##2>{\deltaX##1\deltaY##2}%
 \next<#7>%
 \def\next##1{\topw##1\relax%
 \ifodd\topw \def\za{}\else\def\za{\relax}\fi \divide\topw by 2
 \ifodd\topw \def\zb{}\else\def\zb{\relax}\fi \divide\topw by 2
 \ifodd\topw \def\zc{}\else\def\zc{\relax}\fi \divide\topw by 2
 \ifodd\topw \def\zd{}\else\def\zd{\relax}\fi \divide\topw by 2
 \ifodd\topw \def\ze{}\else\def\ze{\relax}\fi \divide\topw by 2
 \ifodd\topw \def\zf{}\else\def\zf{\relax}\fi \divide\topw by 2
 \ifodd\topw \def\zg{}\else\def\zg{\relax}\fi \divide\topw by 2
 \ifodd\topw \def\zh{}\else\def\zh{\relax}\fi \divide\topw by 2
 \ifodd\topw \def\zi{}\else\def\zi{\relax}\fi \divide\topw by 2
 \ifodd\topw \def\zj{}\else\def\zj{\relax}\fi \divide\topw by 2
 \ifodd\topw \def\zk{}\else\def\zk{\relax}\fi \divide\topw by 2
 \ifodd\topw \def\zl{}\else\def\zl{\relax}\fi}%
 \next{#6}%
 \def\next[##1`##2`##3`##4`##5`##6`##7`##8`##9]{%
 \def\nodeA{##1}\def\nodeB{##2}\def\nodeC{##3}%
 \def\nodeD{##4}\def\nodeE{##5}\def\nodeF{##6}%
 \def\nodeG{##7}\def\nodeH{##8}\def\nodeI{##9}}%
 \next[#8]%
 \def\next[##1`##2`##3`##4`##5`##6`##7]{%
 \def\labela{##1}\def\labelb{##2}\def\labelc{##3}%
 \def\labeld{##4}\def\labele{##5}\def\labelf{##6}\nextt[##7]}%
 \def\nextt[##1`##2`##3`##4`##5`##6]{%
 \def\labelg{##1}\def\labelh{##2}\def\labeli{##3}%
 \def\labelj{##4}\def\labelk{##5}\def\labell{##6}}%
 \next[#9]%
 \def\next/##1`##2`##3`##4`##5`##6`##7`##8/{%
 \advance\ypos\deltay
    \ifx\zf\empty \morphism(\xpos,\ypos)/<-/<-\deltaX,0>[\nodeD`0;]\fi
 \morphism(\xpos,\ypos)|\xf|/{##6}/<\deltax,0>[\nodeD`\nodeE;\labelf]%
    \advance \xpos\deltax
    \morphism(\xpos,\ypos)|\xg|/{##7}/<\deltax,0>[\nodeE`\nodeF;\labelg]%
    \ifx\zg\empty \advance\xpos \deltax
        \morphism(\xpos,\ypos)<\deltaX,0>[\nodeF`0;]\fi
    \xpos#1 \advance\ypos\deltay
    \ifx\zd\empty \morphism(\xpos,\ypos)/<-/<-\deltaX,0>[\nodeA`0;]\fi
    \ifx\za\empty \morphism(\xpos,\ypos)/<-/<0,\deltaY>[\nodeA`0;]\fi
    \morphism(\xpos,\ypos)|\xa|/{##1}/<\deltax,0>[\nodeA`\nodeB;\labela]%
 \morphism(\xpos,\ypos)|\xc|/{##3}/<0,-\deltay>[\nodeA`\nodeD;\labelc]%
    \advance \xpos\deltax
     \morphism(\xpos,\ypos)|\xb|/{##2}/<\deltax,0>[\nodeB`\nodeC;\labelb]%
     \morphism(\xpos,\ypos)|\xd|/{##4}/<0,-\deltay>[\nodeB`\nodeE;\labeld]%
     \ifx\zb\empty \morphism(\xpos,\ypos)/<-/<0,\deltaY>[\nodeB`0;]\fi
     \advance\xpos\deltax
 \morphism(\xpos,\ypos)|\xe|/{##5}/<0,-\deltay>[\nodeC`\nodeF;\labele]%
     \ifx\zc\empty \morphism(\xpos,\ypos)/<-/<0,\deltaY>[\nodeC`0;]\fi
     \ifx\ze\empty \morphism(\xpos,\ypos)<\deltaX,0>[\nodeC`0;]\fi
   \nextt/##8/}%
 \def\nextt/##1`##2`##3`##4`##5/{%
 \xpos#1\ypos#2\relax%
   \ifx\zh\empty \morphism(\xpos,\ypos)/<-/<-\deltaX,0>[\nodeG`0;]\fi
   \ifx\zj\empty \morphism(\xpos,\ypos)<0,-\deltaY>[\nodeG`0;]\fi
   \morphism(\xpos,\ypos)|\xk|/{##4}/<\deltax,0>[\nodeG`\nodeH;\labelk]%
   \advance\xpos\deltax
   \morphism(\xpos,\ypos)|\xl|/{##5}/<\deltax,0>[\nodeH`\nodeI;\labell]%
   \ifx\zk\empty \morphism(\xpos,\ypos)<0,-\deltaY>[\nodeH`0;]\fi
   \advance\xpos\deltax
   \ifx\zi\empty \morphism(\xpos,\ypos)<\deltaX,0>[\nodeI`0;]\fi
   \ifx\zl\empty \morphism(\xpos,\ypos)<0,-\deltaY>[\nodeI`0;]\fi
   \xpos#1 \advance\ypos\deltay
    \morphism(\xpos,\ypos)|\xh|/{##1}/<0,-\deltay>[\nodeD`\nodeG;\labelh]%
    \advance \xpos\deltax
    \morphism(\xpos,\ypos)|\xi|/{##2}/<0,-\deltay>[\nodeE`\nodeH;\labeli]%
    \advance \xpos\deltax
 \morphism(\xpos,\ypos)|\xj|/{##3}/<0,-\deltay>[\nodeF`\nodeI;\labelj]}%
 \next/#4/\ignorespaces}%
\def\iiixiiip(#1){\ifnextchar|{\iiixiiipp(#1)}%
  {\iiixiiipp(#1)|aalmrmmlmrbb|}}%
\def\iiixiiipp(#1)|#2|{\ifnextchar/{\iiixiiippp(#1)|#2|}%
    {\iiixiiippp(#1)|#2|/>`>`>`>`>`>`>`>`>`>`>`>/}}%
\def\iiixiiippp(#1)|#2|/#3/{%
    \ifnextchar<{\iiixiiipppp(#1)|#2|/#3/}%
    {\iiixiiipppp(#1)|#2|/#3/<\default,\default>}}%
\def\iiixiiipppp(#1)|#2|/#3/<#4>{\ifnextchar[{\iiixiiippppp(#1)|#2|/#3/%
   <#4>0<0,0>}{\iiixiiippppp(#1)|#2|/#3/<#4>}}%
\def\iiixiiippppp(#1)|#2|/#3/<#4>#5{\ifnextchar<%
   {\iiixiiipppppp(#1)|#2|/#3/<#4>{#5}}%
   {\iiixiiipppppp(#1)|#2|/#3/<#4>{#5}<400,400>}}%
\def\iiixiipppppp(#1,#2)|#3|/#4/<#5>#6<#7>[#8;#9]{%
 \xpos#1\ypos#2\relax%
 \def\next|##1##2##3##4##5##6##7|{\def\xa{##1}\def\xb{##2}%
 \def\xc{##3}\def\xd{##4}\def\xe{##5}\def\xf{##6}\def\xg{##7}}%
 \next|#3|%
 \def\next<##1,##2>{\deltax##1\deltay##2}%
 \next<#5>%
 \deltaX#7
 \topw#6
 \def\next{%
 \ifodd\topw \def\za{}\else\def\za{\relax}\fi \divide\topw by 2
 \ifodd\topw \def\zb{}\else\def\zb{\relax}\fi \divide\topw by 2
 \ifodd\topw \def\zc{}\else\def\zc{\relax}\fi \divide\topw by 2
 \ifodd\topw \def\zd{}\else\def\zd{\relax}\fi}%
 \next%
 \def\next[##1`##2`##3`##4`##5`##6]{%
 \def\nodea{##1}\def\nodeb{##2}\def\nodec{##3}%
 \def\noded{##4}\def\nodee{##5}\def\nodef{##6}}%
 \next[#8]%
 \def\next[##1`##2`##3`##4`##5`##6`##7]{%
 \def\labela{##1}\def\labelb{##2}\def\labelc{##3}%
 \def\labeld{##4}\def\labele{##5}\def\labelf{##6}\def\labelg{##7}}%
 \next[#9]%
 \def\next/##1`##2`##3`##4`##5`##6`##7/{%
 {\ifx\zc\empty\advance\xpos -\deltaX
\relax\morphism(\xpos,\ypos)<\deltaX,0>[0`\noded;]\fi}%
 \morphism(\xpos,\ypos)|\xf|/##6/<\deltax,0>[\noded`\nodee;\labelf]%
 \advance\xpos by \deltax%
 \morphism(\xpos,\ypos)|\xg|/##7/<\deltax,0>[\nodee`\nodef;\labelg]%
 {\ifx\zd\empty \advance\xpos by \deltax
\relax  \morphism(\xpos,\ypos)<\deltaX,0>[\nodef`0;]\fi}%
 \advance\xpos by -\deltax  \advance\ypos by \deltay
 {\ifx\za\empty\advance \xpos by -\deltaX
\relax\morphism(\xpos,\ypos)<\deltaX,0>[0`\nodea;]\fi}%
 \morphism(\xpos,\ypos)|\xa|/##1/<\deltax,0>[\nodea`\nodeb;\labela]%
 \morphism(\xpos,\ypos)|\xc|/##3/<0,-\deltay>[\nodea`\noded;\labelc]%
 \advance\xpos by \deltax%
 \morphism(\xpos,\ypos)|\xb|/##2/<\deltax,0>[\nodeb`\nodec;\labelb]%
 \morphism(\xpos,\ypos)|\xd|/##4/<0,-\deltay>[\nodeb`\nodee;\labeld]%
 \advance\xpos by \deltax%
 \morphism(\xpos,\ypos)|\xe|/##5/<0,-\deltay>[\nodec`\nodef;\labele]%
 \ifx\zb\empty\relax \morphism(\xpos,\ypos)<\deltaX,0>[\nodec`0;]\fi}%
 \next/#4/\ignorespaces}%
\def\iiixiip(#1){\ifnextchar|{\iiixiipp(#1)}%
  {\iiixiipp(#1)|aalmrbb|}}%
\def\iiixiipp(#1)|#2|{\ifnextchar/{\iiixiippp(#1)|#2|}%
    {\iiixiippp(#1)|#2|/>`>`>`>`>`>`>/}}%
\def\iiixiippp(#1)|#2|/#3/{%
    \ifnextchar<{\iiixiipppp(#1)|#2|/#3/}%
    {\iiixiipppp(#1)|#2|/#3/<\default,\default>}}%
\def\iiixiipppp(#1)|#2|/#3/<#4>{\ifnextchar[{\iiixiippppp(#1)|#2|/#3/%
   <#4>{0}<0>}{\iiixiippppp(#1)|#2|/#3/<#4>}}%
\def\iiixiippppp(#1)|#2|/#3/<#4>#5{\ifnextchar<%
   {\iiixiipppppp(#1)|#2|/#3/<#4>{#5}}%
   {\iiixiipppppp(#1)|#2|/#3/<#4>{#5}<400>}}%
\def\node#1(#2,#3)[#4]{%
\expandafter\gdef\csname x@#1\endcsname{#2}%
\expandafter\gdef\csname y@#1\endcsname{#3}%
\expandafter\gdef\csname ob@#1\endcsname{#4}%
\place(#2,#3)[#4]\ignorespaces}%
\def\arrow{\ifnextchar|{\arrowp}{\arrowp|a|}}%
\def\arrowp|#1|{\ifnextchar/{\arrowpp|#1|}{\arrowpp|#1|/>/}}%
\def\arrowpp|#1|/#2/[#3`#4;#5]{%
\xfinish=\csname x@#4\endcsname%
\yfinish=\csname y@#4\endcsname%
\advance\xfinish by -\csname x@#3\endcsname%
\advance\yfinish by -\csname y@#3\endcsname%
\morphism(\csname x@#3\endcsname,\csname y@#3\endcsname)|#1|/{#2}/%
<\xfinish,\yfinish>[\phantom{\csname ob@#3\endcsname}`\phantom{\csname
ob@#4\endcsname};#5]%
}%
\def\Loop(#1,#2)#3(#4,#5){\POS(#1,#2)*+!!<0ex,\axis>{#3}\ar@(#4,#5)}%
\def\iloop#1(#2,#3){\xy\Loop(0,0)#1(#2,#3)\endxy}%
     \let \PATHafterPOS\PATHafterPOS@default%
     \let \arsavedPATHafterPOS@@\relax%
     \let\afterar@@\relax%
\xydef@\endxyobj{\if\inxy@\else\xyerror@{Unexpected \string\endxy}{}\fi%
>  \relax%
>   \dimen@=\Y@max \advance\dimen@-\Y@min%
>   \ifdim\dimen@<\z@ \dimen@=\z@ \Y@min=\z@ \Y@max=\z@ \fi%
>   \dimen@=\X@max \advance\dimen@-\X@min%
>   \ifdim\dimen@<\z@ \dimen@=\z@ \X@min=\z@ \X@max=\z@ \fi%
>   \edef\tmp@{\egroup%
>     \setboxz@h{\kern-\the\X@min \boxz@}%
>     \ht\z@=\the\Y@max \dp\z@=-\the\Y@min \wdz@=\the\dimen@%
>     \noexpand\maybeunraise@ \raise\dimen@\boxz@%
>     \noexpand\recoverXyStyle@ \egroup \noexpand\xy@end%
>     \U@c=\the\Y@max \advance\U@c-\the\Y@c%
>     \D@c=-\the\Y@min \advance\D@c\the\Y@c%
>     \L@c=-\the\X@min  \advance\L@c\the\X@c%
>     \R@c=\the\X@max  \advance\R@c-\the\X@c%
>    }\tmp@}%
\gdef\xymerge@MinMax{}%
\xydef@\twocell{\hbox\bgroup\xysave@MinMax\@twocell}%
\xydef@\uppertwocell{\hbox\bgroup\xysave@MinMax\@uppertwocell}%
\xydef@\lowertwocell{\hbox\bgroup\xysave@MinMax\@lowertwocell}%
\xydef@\compositemap{\hbox\bgroup\xysave@MinMax\@compositemap}%
\xydef@\xysave@MinMax{\xdef\xymerge@MinMax{%
   \noexpand\ifdim\X@max<\the\X@max \X@max=\the\X@max\noexpand\fi%
   \noexpand\ifdim\X@min>\the\X@min \X@min=\the\X@min\noexpand\fi%
   \noexpand\ifdim\Y@max<\the\Y@max \Y@max=\the\Y@max\noexpand\fi%
   \noexpand\ifdim\Y@min>\the\Y@min \Y@min=\the\Y@min\noexpand\fi%
  }}%
\xydef@\drop@Twocell{\boxz@ \xymerge@MinMax}%
\xydef@\twocell@DONE{%
  \edef\tmp@{\egroup%
   \X@min=\the\X@min \X@max=\the\X@max%
   \Y@min=\the\Y@min \Y@max=\the\Y@max}\tmp@%
  \L@c=\X@c \advance\L@c-\X@min \R@c=\X@max \advance\R@c-\X@c%
  \D@c=\Y@c \advance\D@c-\Y@min \U@c=\Y@max \advance\U@c-\Y@c%
  \ht\z@=\U@c \dp\z@=\D@c \dimen@=\L@c \advance\dimen@\R@c \wdz@=\dimen@%
  \computeLeftUpness@%
  \setboxz@h{\kern-\X@p \raise-\Y@c\boxz@ }%
  \dimen@=\L@c \advance\dimen@\R@c \wdz@=\dimen@ \ht\z@=\U@c \dp\z@=\D@c%
  \Edge@c={\rectangleEdge}\Invisible@false \Hidden@false%
  \edef\Drop@@{\noexpand\drop@Twocell%
   \noexpand\def\noexpand\Leftness@{\Leftness@}%
   \noexpand\def\noexpand\Upness@{\Upness@}}%
  \edef\Connect@@{\noexpand\connect@Twocell%
   \noexpand\ifdim\X@max<\the\X@max \X@max=\the\X@max\noexpand\fi%
   \noexpand\ifdim\X@min>\the\X@min \X@min=\the\X@min\noexpand\fi%
   \noexpand\ifdim\Y@max<\the\Y@max \Y@max=\the\Y@max\noexpand\fi%
   \noexpand\ifdim\Y@min>\the\Y@min \Y@min=\the\Y@min\noexpand\fi }%
  \xymerge@MinMax%
}%
\newbox\anglebox 
\newbox\angleboxr 
\newbox\sanglebox 
\newbox\sangleboxr 
\newbox\sangleboxf 
\newbox\angleboxf 
\newbox\sangleboxfr 
\newbox\angleboxfr 
\newcommand{\alg}[1]{\ensuremath{\mathbf{#1}}}
\newcommand{\Sets}{\ensuremath{\mathbf{Set}}}
\newcommand{\theory}{\ensuremath{\mathbb{T}}}
\title{COMPLETENESS OF INFINITARY HETEROGENEOUS LOGIC}
\author{Christian Esp\'indola}
\begin{document}
\date{}
\maketitle

\begin{abstract}
Given a regular cardinal $\kappa$ such that $\kappa^{<\kappa}=\kappa$ (e.g. if the Generalized Continuum Hypothesis holds), we develop a proof system for classical infinitary logic that includes heterogeneous quantification (i.e., infinite alternate sequences of quantifiers) within the language $\mathcal{L}_{\kappa^+, \kappa}$, where there are conjunctions and disjunctions of at most $\kappa$ any formulas and quantification (including the heterogeneous one) is applied to less than $\kappa$ many variables. This type of quantification is interpreted in $\mathcal{S}et$ using the usual second-order formulation in terms of strategies for games, and the axioms are based on a stronger variant of the axiom of determinacy for game semantics. Within this axiom system we prove the soundness and completeness theorem with respect to a class of set-valued structures that we call well-determined. Although this class is more restrictive than the class of determined structures in Takeuti's determinate logic, the completeness theorem works in our case for a wider variety of formulas of $\mathcal{L}_{\kappa^+, \kappa}$, and the category of well-determined models of heterogeneous theories is accessible. Our system is formulated within the sequent style of categorical logic and we do not need to impose any specific requirements on the proof trees, disregarding thus the eigenvariable conditions needed in Takeuti's system. We also investigate intuitionistic systems with heterogeneous quantifiers for $\mathcal{L}_{\kappa^+, \kappa, \kappa}$ (when only conjunctions of less than $\kappa$ many formulas are allowed), and prove analogously a completeness theorem with respect to well-determined structures in categories in general, in $\kappa$-Grothendieck toposes in particular, and, when $\kappa^{<\kappa}=\kappa$, also in Kripke models. Finally, we consider an extension of our system in which heterogeneous quantification with bounded quantifiers is expressible, and extend our completeness results to that case.
\end{abstract}

\noindent $\mathbf{Keywords:}$ heterogeneous quantifiers, infinitary logics, game semantics, determinacy, completeness theorems.

\section{Introduction}

This paper is a continuation of the investigation begun in \cite{espindola} on infinitary categorical logic, focusing now on extending the completeness theorems for the homogeneous fragment of $\mathcal{L}_{\kappa^+, \kappa}$ to a system which includes heterogeneous quantification on less than $\kappa$ many variables.

Heterogeneous quantifiers (infinite alternations of universal and existential quantification) present a new kind of quantification in infinitary logic related to game semantics, in which two players successively chose elements of a structure and their goal is to satisfy (respectively falsify) a certain formula when evaluated in those elements. Classical proof systems for heterogeneous quantification have been introduced by Takeuti (see \cite{takeuti1} and chapter $4$ of \cite{takeuti2}) based on the axiom of determinacy, according to which in every such game one of the players has a winning strategy (for an account of this type of games see \cite{gs}). Takeuti's system is expressed via Gentzen-type sequents of the form $\Gamma \vdash \Lambda$, where $\Gamma$ and $\Lambda$ are sets of at most $\kappa$ many formulas in $\mathcal{L}_{\kappa, \kappa}$ (where $\kappa^{<\kappa}=\kappa$). This amounts to working with a special type of formulas of $\mathcal{L}_{\kappa^+, \kappa}$, namely the ones of the form:

$$\bigwedge_{i<\kappa} \phi_i \to \bigvee_{i<\kappa} \psi_i \qquad (1)$$
\noindent
where the $\phi_i$ and $\psi_i$ are arbitrary formulas in $\mathcal{L}_{\kappa, \kappa}$. 

As shown in \cite{takeuti1}, every such formula that is valid in all determinate structures (i.e., all structures that satisfy the axiom of determinacy), is provable. 
Our system differs from that of Takeuti essentially in the fact that, in principle, we will be able to derive provability from the validity of any formula of $\mathcal{L}_{\kappa^+, \kappa}$ rather than for the ones of the special form $(1)$. To do so, we will work within an axiom system that involves a stronger form of determinacy; more precisely, we will require to work in structures for which every game definable by formulas belonging to a certain class $\mathcal{C}$ satisfies the following two conditions:

\begin{itemize}
\item one of the players has a winning strategy;
\item if a given player, for every $n \in \alpha$ has still a winning strategy after having played the first $n$ moves of a given sequence $(x_n)_{n \in \alpha}$, then the sequence belongs to the set of winning games for that player.
\end{itemize}

Structures that satisfy both requirements above will be called \emph{well-determined}. If we take as $\mathcal{C}$ the class of all formulas in $\mathcal{L}_{\kappa^+, \kappa}$, the second condition above is too strong to be of use, since we will see that a structure that satisfies it for every possible definable game consists of just one element. This degenerate case is of little interest, since the meaning of universal and existential quantification agree, and hence no extra expressivity is gained with heterogeneous quantification. For this reason, it will be convenient to work with classes $\mathcal{C}$ for which the variety of structures satisfying both conditions above is richer. For such $\mathcal{C}$, we can consider the subclass of formulas in $\mathcal{L}_{\kappa^+, \kappa}$ where heterogeneous quantification is only applied to formulas of $\mathcal{C}$. This will include naturally some formulas of the form $(1)$, but also many others that do not take such a form. For all of them, we will see that validity in the class of structures well-determined for $\mathcal{C}$ will be equivalent to provability from our particular set of axioms. Moreover, the way we will achieve this completeness result will show that from the axiom schemata involving heterogeneous quantification that are used to prove valid formulas, only instantiations in formulas of $\mathcal{C}$ appear in such a proof. Therefore, for this class $\mathcal{C}$ we can set up an axiomatic system that is sound and complete with respect to well-determined structures for $\mathcal{C}$, as long as heterogeneous quantification is only applied to formulas in $\mathcal{C}$. Of course, to make this interesting, we will check that we have a good supply of structures well-determined for $\mathcal{C}$.

\subsection{$\kappa$-heterogeneous logic}

Let $\kappa$ be a regular cardinal such that $\kappa^{<\kappa}=\kappa$. The syntax of $\kappa$-heterogeneous logic consists of a (well-ordered) set of sorts and a set of function and relation symbols, these latter together with the corresponding type, which is a subset with less than $\kappa$ many sorts. Therefore, we assume that our signature may contain relation and function symbols on $\gamma<\kappa$ many variables, and we suppose there is a supply of $\kappa$ many fresh variables of each sort. Terms and atomic formulas are defined as usual, and general formulas are defined inductively according to the following:

\begin{defs} If $\phi, \psi, \{\phi_{\alpha}: \alpha<\delta\}$ (for each $\delta<\kappa^+$) are $\kappa$-heterogeneous formulas, then for each $\gamma<\kappa$ the following are also formulas: $(\forall\exists)_{\alpha<\gamma} \mathbf{x_{\alpha}} \phi$, $(\exists\forall)_{\alpha<\gamma} \mathbf{x_{\alpha}} \phi$, $\exists_{\alpha<\gamma} \mathbf{x_{\alpha}} \phi$, $\forall_{\alpha<\gamma} \mathbf{x_{\alpha}} \phi$, $\phi \to \psi$, and $\bigwedge_{\alpha<\eta}\phi_{\alpha}$ (where $\eta<\kappa$) and $\bigvee_{\alpha<\delta}\phi_{\alpha}$, these latter provided that $\cup_{\alpha<\delta}FV(\phi_{\alpha})$, the set of free variables of all $\phi_{\alpha}$, has cardinality less than $\kappa$.
\end{defs}

In this definition, the bold $\mathbf{x_{\alpha}}$ represents a block of variables, while $x$ represents a single variable. The intended meaning of a heterogeneous quantifier $(\forall\exists)_{\alpha<\gamma} \mathbf{x_{\alpha}} \phi$ is its formal expansion as $\forall \mathbf{x_0} \exists \mathbf{x_1} \forall \mathbf{x_2} ... \phi(\mathbf{x_0}, \mathbf{x_1}, \mathbf{x_2}, ...)$. Unlike homogeneous formulas, this intended meaning is non well-founded, as in particular we can have as a subformula of this expansion also the expansion of each $(\forall\exists)_{\beta \leq \alpha<\gamma} \mathbf{x_{\alpha}} \phi$ for $\beta<\gamma$, with the obvious meaning of the notation that we now make explicit in the following convention. Call an ordinal $\beta$ even (respectively, odd) if it is of the form $\alpha+n$ with $\alpha$ limit and $n \in \omega$ and $n$ is even (respectively, odd); we also denote $\gamma - \beta$ the unique ordinal isomorphic to $\gamma \setminus \beta$. We adopt the following notation:

\begin{enumerate}
\item $(\forall\exists)_{\beta \leq \alpha<\gamma} \mathbf{x_{\alpha}} \phi :=\begin{cases}
(\forall\exists)_{\alpha<\gamma - \beta} \mathbf{x_{\beta+\alpha}} \phi & \text{ if $\beta$ is even}\\
(\exists\forall)_{\alpha<\gamma - \beta} \mathbf{x_{\beta+\alpha}} \phi & \text{ if $\beta$ is odd}
\end{cases}$
\item $(\forall\exists)_{\beta<\alpha<\gamma} \mathbf{x_{\alpha}} \phi := (\forall\exists)_{\beta+1 \leq \alpha<\gamma} \mathbf{x_{\alpha}} \phi \text{ and }(\forall\exists)_{\emptyset} \mathbf{x_{\alpha}} \phi :=\phi$.
\end{enumerate}
 \noindent
as well as the dual definitions for $(\exists\forall)_{\beta \leq \alpha<\gamma} \mathbf{x_{\alpha}} \phi$, etc.

We use sequent style calculus to formulate the axioms of $\kappa$-heterogeneous logic, as can be found, e.g., in \cite{johnstone}, D1.3. Each sequent $\phi \vdash_{\mathbf{x}} \psi$ has a context $\mathbf{x}$ consisting of less than $\kappa$ many variables. The system for $\kappa$-heterogeneous logic is described in the following definition. Besides the specific axioms for heterogeneous quantification, it features the transfinite transitivity rule, which was explained in detail in \cite{espindola1} (under the name ``rule T"). The axioma schemata for the heterogeneous quantification axiomatize the structures satisfying the two game-theoretic conditions imposed in the introduction.

\begin{defs}\label{sfol}
 The system of axioms and rules for $\kappa$-heterogeneous logic consists of

\begin{enumerate}
 \item Structural rules:
 \begin{enumerate}
 \item Identity axiom:
\begin{mathpar}
\phi \vdash_{\mathbf{x}} \phi 
\end{mathpar}
\item Substitution rule:
\begin{mathpar}
\inferrule{\phi \vdash_{\mathbf{x}} \psi}{\phi[\mathbf{s}/\mathbf{x}] \vdash_{\mathbf{y}} \psi[\mathbf{s}/\mathbf{x}]} 
\end{mathpar}

where $\mathbf{y}$ is a string of variables including all variables occurring in the string of terms $\mathbf{s}$.
\item Cut rule:
\begin{mathpar}
\inferrule{\phi \vdash_{\mathbf{x}} \psi \\ \psi \vdash_{\mathbf{x}} \theta}{\phi \vdash_{\mathbf{x}} \theta} 
\end{mathpar}
\end{enumerate}

\item Equality axioms:

\begin{enumerate}
\item 

\begin{mathpar}
\top \vdash_{x} x=x 
\end{mathpar}

\item 

\begin{mathpar}
(\mathbf{x}=\mathbf{y}) \wedge \phi[\mathbf{x}/\mathbf{w}] \vdash_{\mathbf{z}} \phi[\mathbf{y}/\mathbf{w}]
\end{mathpar}

where $\mathbf{x}$, $\mathbf{y}$ are contexts of the same length and type and $\mathbf{z}$ is any context containing $\mathbf{x}$, $\mathbf{y}$ and the free variables of $\phi$.
\end{enumerate}

\item Conjunction axioms and rules:

$$\bigwedge_{i<\gamma} \phi_i \vdash_{\mathbf{x}} \phi_j$$

\begin{mathpar}
\inferrule{\{\phi \vdash_{\mathbf{x}} \psi_i\}_{i<\gamma}}{\phi \vdash_{\mathbf{x}} \bigwedge_{i<\gamma} \psi_i}
\end{mathpar}

for each cardinal $\gamma<\kappa^+$.

\item Disjunction axioms and rules:

$$\phi_j \vdash_{\mathbf{x}} \bigvee_{i<\gamma} \phi_i$$

\begin{mathpar}
\inferrule{\{\phi_i \vdash_{\mathbf{x}} \theta\}_{i<\gamma}}{\bigvee_{i<\gamma} \phi_i \vdash_{\mathbf{x}} \theta}
\end{mathpar}

for each cardinal $\gamma<\kappa^+$.

\item Implication rule:
\begin{mathpar}
\mprset{fraction={===}}
\inferrule{\phi \wedge \psi \vdash_{\mathbf{x}} \eta}{\phi \vdash_{\mathbf{x}} \psi \to \eta}
\end{mathpar}

\item Existential rule:
\begin{mathpar}
\mprset{fraction={===}}
\inferrule{\phi \vdash_{\mathbf{x} \mathbf{y}} \psi}{\exists \mathbf{y}\phi \vdash_{\mathbf{x}} \psi}
\end{mathpar}

where no variable in $\mathbf{y}$ is free in $\psi$.

\item Universal rule:
\begin{mathpar}
\mprset{fraction={===}}
\inferrule{\phi \vdash_{\mathbf{x} \mathbf{y}} \psi}{\phi \vdash_{\mathbf{x}} \forall \mathbf{y} \psi}
\end{mathpar}

where no variable in $\mathbf{y}$ is free in $\phi$.

\item Transfinite transitivity rule:

\begin{mathpar}
\inferrule{\phi_{f} \vdash_{\mathbf{y}_{f}} \bigvee_{g \in \gamma^{\beta+1}, g|_{\beta}=f} \exists \mathbf{x}_{g} \phi_{g} \\ \beta<\kappa, f \in \gamma^{\beta} \\\\ \phi_{f} \dashv \vdash_{\mathbf{y}_{f}} \bigwedge_{\alpha<\beta}\phi_{f|_{\alpha}} \\ \beta < \kappa, \text{ limit }\beta, f \in \gamma^{\beta}}{\phi_{\emptyset} \vdash_{\mathbf{y}_{\emptyset}} \bigvee_{f \in B}  \exists_{\beta<\delta_f}\mathbf{x}_{f|_{\beta +1}} \bigwedge_{\beta<\delta_f}\phi_{f|_{\beta+1}}}
\end{mathpar}
\\
for each cardinal $\gamma<\kappa^+$, where $\mathbf{y}_{f}$ is the canonical context of $\phi_{f}$, provided that, for every $f \in \gamma^{\beta+1}$,  $FV(\phi_{f}) = FV(\phi_{f|_{\beta}}) \cup \mathbf{x}_{f}$ and $\mathbf{x}_{f|_{\beta +1}} \cap FV(\phi_{f|_{\beta}})= \emptyset$ for any $\beta<\gamma$, as well as $FV(\phi_{f}) = \bigcup_{\alpha<\beta} FV(\phi_{f|_{\alpha}})$ for limit $\beta$. Here $B \subseteq \gamma^{< \kappa}$ consists of the minimal elements of a given bar over the tree $\gamma^{\kappa}$, and the $\delta_f$ are the levels of the corresponding $f \in B$.
\\
The following axioms are intended to be instantiated only on formulas $\phi$ belonging to a certain subclass $\mathcal{C}$.

\item Heterogeneous axioms:

$$(\forall\exists)_{\alpha<\gamma} \mathbf{x_{\alpha}} \phi \vdash_{\mathbf{x}}\forall \mathbf{x_0} (\forall\exists)_{0< \alpha<\gamma} \mathbf{x_{\alpha}} \phi$$
$$(\exists\forall)_{\alpha<\gamma} \mathbf{x_{\alpha}} \neg \phi \vdash_{\mathbf{x}}\exists \mathbf{x_0} (\exists\forall)_{0<\alpha<\gamma} \mathbf{x_{\alpha}} \neg \phi$$

\noindent
for each limit ordinal $\gamma<\kappa$, and

$$(\forall\exists)_{\alpha<\gamma} \mathbf{x_{\alpha}} \phi \vdash_{\mathbf{x}}(\forall\exists)_{\alpha<\beta} \mathbf{x_{\alpha}} (\forall\exists)_{\beta \leq \alpha<\gamma} \mathbf{x_{\alpha}} \phi$$
$$(\exists\forall)_{\alpha<\gamma} \mathbf{x_{\alpha}} \neg \phi \vdash_{\mathbf{x}}(\exists\forall)_{\alpha<\beta} \mathbf{x_{\alpha}} (\exists\forall)_{\beta \leq \alpha<\gamma} \mathbf{x_{\alpha}} \neg \phi$$

\noindent
for each pair of limit ordinals $\beta<\gamma<\kappa$.

\item Preservation axioms:

$$\bigwedge_{\beta<\gamma} (\forall\exists)_{\beta \leq \alpha<\gamma} \mathbf{x_{\alpha}} \phi (\mathbf{x}, \mathbf{y_0}, \mathbf{y_1}, ..., \mathbf{x_{\beta}}, \mathbf{x_{\beta+1}}, ...) \vdash_{\mathbf{x} \cup \mathbf{y}_{\gamma}} \phi (\mathbf{x}, \mathbf{y_0}, \mathbf{y_1}, ...)$$
$$\bigwedge_{\beta<\gamma} (\exists\forall)_{\beta \leq \alpha<\gamma} \mathbf{x_{\alpha}} \neg \phi (\mathbf{x}, \mathbf{y_0}, \mathbf{y_1}, ..., \mathbf{x_{\beta}}, \mathbf{x_{\beta+1}}, ...) \vdash_{\mathbf{x} \cup \mathbf{y}_{\gamma}} \neg \phi (\mathbf{x}, \mathbf{y_0}, \mathbf{y_1}, ...)$$

\noindent
for each limit ordinal $\gamma<\kappa$, where $\mathbf{y}_{\gamma}=\cup_{\alpha<\gamma}\mathbf{y}_{\alpha}$.

\item Axiom of determinacy

$$\top \vdash_{\mathbf{x}} (\forall\exists)_{\alpha<\gamma} \mathbf{x_{\alpha}} \phi \vee (\exists\forall)_{\alpha<\gamma} \mathbf{x_{\alpha}} \neg \phi$$
\noindent
for each limit ordinal $\gamma<\kappa$.

\end{enumerate}
\end{defs}

Note that we are not assuming the axiom of excluded middle in general, which would be implied by the relevant instances of the axiom of determinacy for an appropriate choice of the subclass $\mathcal{C}$. Also, the axiom of determinacy and the preservation axioms are the formalizations in the language of the two conditions imposed on well-determined structures.

\section{$\kappa$-heterogeneous categories}

\subsection{Heterogeneous quantification in $\kappa$-Grothendieck toposes}

We define $\kappa$-Grothendieck toposes as sheaf toposes over a site whose underlying category is regular, has $\kappa$-limits and stable unions of less than $\kappa^+$ many subobjects, and where the coverage satisfies the transfinite transitivity property, i.e., transfinite composites of covering families are covering (we refer to \cite{espindola} and \cite{espindola1} for the terminology).

In the category of sets, heterogeneous quantification is defined through the usual second-order interpretation; more precisely, the category of sets satisfies the formula (in the language of set theory) $(\forall\exists)_{\alpha<\gamma} \mathbf{x_{\alpha}} \phi(\mathbf{x_0}, \mathbf{x_1}, ...)$ if and only if:

\begin{itemize}
\item there are functions $\mathbf{x_1}=f_1(\mathbf{x_0}), \mathbf{x_3}=f_3(\mathbf{x_0}, \mathbf{x_1}, \mathbf{x_2})$, ..., such that we have $\forall \mathbf{x_0}, \mathbf{x_2}, ... \phi(\mathbf{x_0}, f_1(\mathbf{x_0}), \mathbf{x_2}, f_3(\mathbf{x_0}, \mathbf{x_1}, \mathbf{x_2}), ...) \qquad (2)$
\end{itemize}

It is clear that the set-theoretic axioms then allow to transform $(2)$ in a finitary formula of the language of set theory, and we say that (2) holds if this finitary formula holds. Note that the restating $(2)$ is possible since the category of sets satisfies the axiom of choice, but in a general topos heterogeneous quantification cannot be defined in terms of its second-order interpretation (even if the internal language of the topos supports this latter).

Nevertheless, any $\kappa$-Grothendieck topos $\mathcal{E}$ admits a natural interpretation of heterogeneous quantification. Given a limit ordinal $\gamma<\kappa$ and a $\kappa$-chain of length $\gamma$ (i.e., a diagram $F: \gamma^{op} \to \mathcal{E})$, with $F(\alpha)=A \times \Pi_{i < \alpha}A_i$ , its limit projection $f: A \times A_{\gamma}:=A \times \Pi_{i<\gamma}A_i \to A$ and a subobject $S \rightarrowtail A \times A_{\gamma}$, we proceed to define the subobject $(\forall\exists)_f(S) \rightarrowtail A$ in a functorial way with respect to the relevant subobject lattices. In the same way as the definition of the interpretation of quantifiers in the topos makes use of the corresponding type of quantification at the meta-theoretical level, so the definition of the interpretation of heterogeneous quantification is going to use a heterogeneous quantification in the metatheory (which in our case, being $ZFC$, is strong enough to support such quantification). Actually, we will need an even stronger type of quantification in the metatheory than just heterogeneous; given sets $(A_i)_{i<\gamma}$, we would need to express a quantification of the form $\forall \mathbf{x_0} \in A_0 \exists \mathbf{x_1} \in A_1(\mathbf{x_0}) \forall \mathbf{x_2} \in A_2(\mathbf{x_0}, \mathbf{x_1}) ... \phi (\mathbf{x_0}, \mathbf{x_1}, \mathbf{x_2}, ...)$, in which all quantifiers appear bounded by sets that depend on previous variables. This can simply be done by stipulating that:

\begin{itemize}
\item there are choice functions $f_1$ on the family $\{A_1(\mathbf{x_0})\}_{\mathbf{x_0} \in A_0}$, $f_3$ on the family $\{A_3(\mathbf{x_0}, \mathbf{x_1}, \mathbf{x_2})\}_{\mathbf{x_0} \in A_0, \mathbf{x_1} \in A_1(\mathbf{x_0}), \mathbf{x_2} \in A_2(\mathbf{x_0}, \mathbf{x_1})}$, ...,  such that we have $\forall \mathbf{x_0} \in A_0, \mathbf{x_2} \in A_2(\mathbf{x_0}, f_1(\mathbf{x_0})), ...\phi(\mathbf{x_0}, f_1(\mathbf{x_0}), \mathbf{x_2}, f_3(\mathbf{x_0}, f_1(\mathbf{x_0}), \mathbf{x_2}), ...) \qquad (3)$
\end{itemize}

Here, the set-theoretic axioms and the process of transfinite recursion allow to transform $(3)$ into a finitary formula of the language of set theory.

We will nevertheless express this formula in words in the form: ``for all $\mathbf{x_0}$ in $A_0$ there is a $\mathbf{x_1}$ in $A_1(\mathbf{x_0})$ such that for all $\mathbf{x_2}$ in $A_2(\mathbf{x_0}, \mathbf{x_1})$ there is a $\mathbf{x_3}$ in $A_3(\mathbf{x_0}, \mathbf{x_1}, \mathbf{x_2})$ such that [...] such that $\phi$ holds at $\mathbf{x_0}, \mathbf{x_1}, \mathbf{x_2}, \mathbf{x_3}, ...$. With this in mind, we now give the following:

\begin{defs}\label{hett}
The subsheaf of $(\forall\exists)_f(S) \rightarrowtail A$ is defined by the following specification: $c \in (\forall\exists)_f(S) (C)$ if and only if for every arrow $D_0 \to C$ and element $e_0 \in A_0(D_0)$ there is a covering family $\{D_1^{i_1} \to D_0\}_{i_1 \in I_1}$ and elements $e_{i_1} \in A_1(D_1^{i_1})$ such that for every $i_1 \in I_1$, every arrow $D_2^{i_1} \to D_1^{i_1}$ and every element $e_2 \in A_2(D_2^{i_1})$ there is a covering family $\{D_3^{i_3} \to D_2^{i_1}\}_{i_3 \in I_3}$ and elements $e_{i_3} \in A_3(D_3^{i_3})$ such that $[...]$ such that if $f_b$ is the limit projection $D_{\gamma}^b \to C$ along a branch $b: ...D_3^{i_3} \to D_2^{i_1} \to D_1^{i_1} \to D_0 \to C$ of the corresponding tree over $C$, then $A(f_b)(c) \in S(D_{\gamma}^b)$.
\end{defs} 

It is clear that Definition \ref{hett} provides a presheaf, but to check that it is good, we first need to verify that it gives actually a subsheaf. Given a covering family $\{C_i \to C\}$ and a matching family of elements $c_i \in (\forall\exists)_f(S) (C_i) \subseteq A(C_i)$, since $A$ is a sheaf we get an element $c \in A(C)$; we claim that $c \in (\forall\exists)_f(S) (C)$. Indeed, given any arrow $l: D_0 \to C$, and element $e_0 \in A_0(D_0)$, the pullbacks $\{{D'}_0^{i_0}=l^*(C_i) \to D_0\}$ give a covering family and $e_0$ induces elements $e'_{i_0} \in A_0({D'}_0^{i_0})$. Since the $c_i$ are in $(\forall\exists)_f(S) (C_i)$, for each arrow $\{{D'}_0^{i_0}=l^*(C_i) \to D_0\}$ together with the element $e'_{i_0}$ there is a covering family $\{D_1^{i_0j} \to {D'}_0^{i_0} \}$ and elements $e'_{i_0j} \in A_1(D_1^{i_0j})$ such that $[...]$ such that the limit projection $f_b: D_{\gamma}^b \to C_i$ along the branch $b$ satisfies $A(f_b)(c_i) \in S(D_{\gamma}^b)$. Then the composite covering $\{D_1^{i_0j} \to {D'}_0^{i_0} \to D_0 \}$ together with the elements $e'_{i_0j}$ and the rest of the data obtained in the definition of the branch $b$, witness that $c \in (\forall\exists)_f(S) (C)$. Finally, it is clear that the definition is functorial between subobject lattices.

Definition \ref{hett} immediately provides the following forcing condition within Kripke-Joyal semantics: for $\mathbf{\beta} \in A(C)$ and variables $\mathbf{x_{\alpha}}$ of type $A_{\alpha}$, we have the following: $C \Vdash (\forall\exists)_{\alpha<\gamma} \mathbf{x_{\alpha}} \phi(\mathbf{\beta}, \mathbf{x_0}, \mathbf{x_1}, ...)$ if and only if for every arrow $D_0 \to C$ and element $e_0 \in A_0(D_0)$ there is a covering family $\{D_1^{i_1} \to D_0\}_{i_1 \in I_1}$ and elements $e_{i_1} \in A_1(D_1^{i_1})$ such that for every $i_1 \in I_1$, every arrow $D_2^{i_1} \to D_1^{i_1}$ and every element $e_2 \in A_2(D_2^{i_1})$ there is a covering family $\{D_3^{i_3} \to D_2^{i_1}\}_{i_3 \in I_3}$ and elements $e_{i_3} \in A_3(D_3^{i_3})$ such that $[...]$ such that if $\pi_{\alpha}: D_{\gamma}^b \to D_{\alpha}$ are the limit projections, $D_{\gamma}^b \Vdash \phi(\mathbf{\beta} f_b, e_0 \pi_0, e_{i_1} \pi_1, e_2 \pi_2, e_{i_3} \pi_3, ...)$. 

In an entirely analogous way, we can define the subsheaf $(\exists\forall)_f(S) \rightarrowtail A$ and obtain an analogous forcing statement.

\subsection{Heterogeneous quantification in $\kappa$-Heyting categories}

When the topology on the site of the topos is subcanonical, so that we have a full and faithful embedding of the underlying category of the site into the topos, the latter forcing condition can be expressed entirely in terms of the underlying category of the site. Indeed, via Yoneda lemma we can identify elements of $A_{\alpha}(D_{\alpha})$ with arrows $D_{\alpha} \to A_{\alpha}$ in the underlying category. On the other hand, any $\kappa$-Heyting category $\mathcal{C}$ (a category with a structure corresponding to axioms $1-9$) admits an embedding into its topos of sheaves $\mathcal{S}h(\mathcal{C}, \tau)$ with the subcanonical topology given by jointly epic arrows of cardinality less than $\kappa^+$. This motivates the following:

\begin{defs}\label{heti}
Given a $\kappa$-Heyting category, a sequence of objects $(A_{\alpha})_{\alpha<\kappa}$, a limit ordinal $\gamma<\kappa$ and a subobject $S \rightarrowtail A \times A_{\gamma}=A \times \Pi_{i<\gamma}A_i$, we define the sets of subobjects of $A$, $T_{(\forall\exists)}^{\gamma, S}$ and $T_{(\exists\forall)}^{\gamma, S}$ as follows:

\begin{enumerate}
\item for $l: C \rightarrowtail A$, $C \in T_{(\forall\exists)}^{\gamma, S} \subseteq \mathcal{S}ub(A)$ if and only if for every arrow $D_0 \to C$ and $e_0: D_0 \to A_0$ there is a covering family $\{D_1^{i_1} \to D_0\}_{i_1 \in I_1}$ and arrows $ e_{i_1}: D_1^{i_1} \to A_1$ such that for every $i_1 \in I_1$, every arrow $D_2^{i_1} \to D_1^{i_1}$ and every arrow $e_2: D_2^{i_1} \to A_2$ there is a covering family $\{D_3^{i_3} \to D_2^{i_1}\}_{i_3 \in I_3}$ and arrows $e_{i_3}: D_3^{i_3} \to A_3$ such that $[...]$ such that if $\pi_{\alpha}: D_{\gamma}^b \to D_{\alpha}$ are the limit projections, then the arrow $(lf_b, e_0\pi_0, e_{i_1}\pi_1, e_2\pi_2, e_{i_3}\pi_3, ...): D_{\gamma}^b \to A \times A_{\gamma}$ factors through $S$.

\item for $l: C \rightarrowtail A$, $C \in T_{(\exists\forall)}^{\gamma, S} \subseteq \mathcal{S}ub(A)$ if and only if there is a covering family $\{D_0^{i_0} \to C\}_{i_0 \in I_0}$ and arrows $e_{i_0}: D_0^{i_0} \to A_0$ such that for every $i_0 \in I_0$, every arrow $D_1^{i_0} \to D_0^{i_0}$ and every arrow $e_1: D_1^{i_0} \to A_1$ there is a covering family $\{D_2^{i_2} \to D_1^{i_0}\}_{i_2 \in I_2}$ and arrows $e_{i_2}: D_2^{i_2} \to A_2$ such that for every $i_2 \in I_2$, every arrow $D_3^{i_2} \to D_2^{i_2}$ and $e_3: D_3^{i_2} \to A_3$ $[...]$ such that if $\pi_{\alpha}: D_{\gamma}^b \to D_{\alpha}$ are the limit projections, then the arrow $(lf_b, e_{i_0}\pi_0, e_1\pi_1, e_{i_2}\pi_2, e_3\pi_3, ...): D_{\gamma}^b \to A \times  A_{\gamma}$ factors through $S$.
\end{enumerate}

\end{defs}

\begin{defs}\label{het0}
A $\kappa$-Heyting category $\mathcal{H}$ is called $\kappa$-heterogeneous if there are objects $(A_{\alpha})_{\alpha<\kappa}$ such that whenever we have a limit ordinal $\gamma<\kappa$, a diagram $F: \gamma^{op} \to \mathcal{C}$ with $F(\alpha)=A \times \Pi_{i < \alpha}A_i$, the limit projection $f: A \times A_{\gamma} \to A$ and a subobject $S \rightarrowtail A \times A_{\gamma}$ corresponding to the interpretation in $\mathcal{H}$ of a formula in the class $\mathcal{C}$, the joins:

$$\bigvee_{C \in T_{(\forall\exists)}^{\gamma, S}}C$$

$$\bigvee_{C \in T_{(\exists\forall)}^{\gamma, \neg S}}C$$

exist in $\mathcal{S}ub(A)$.
\end{defs}

A $\kappa$-heterogeneous category supports, hence, the interpretation of heterogeneous quantification. More precisely, if the interpretation in a $\kappa$-heterogeneous category of the formulas in context $(\mathbf{x_{\alpha}}, \top)$ is given by $A_{\alpha}$, while the interpretation of a formula in context $(\mathbf{\mathbf{x} x_{\gamma}}, \phi)$ (where $\mathbf{x_{\gamma}}= \cup_{\alpha<\gamma}\mathbf{x_{\alpha}})$ is given by $[[\phi]]$, and $\phi \in \mathcal{C}$,  then we proceed to interpret the formulas $(\forall\exists)_{\alpha<\gamma}\mathbf{x_{\alpha}} \phi$ and $(\exists\forall)_{\alpha<\gamma}\mathbf{x_{\alpha}} \phi$ respectively as the joins $\bigvee_{C \in T_{(\forall\exists)}^{\gamma, [[\phi]]}}C$ and $\bigvee_{C \in T_{(\exists\forall)}^{\gamma, [[\phi]]}}C$ of Definition \ref{het0}.

\begin{rmk}
It is easy to check that in the case of $\mathcal{S}et$, the interpretation of heterogeneous quantification just given coincides with that of (2) above.
\end{rmk}

The interpretation just given for heterogeneous quantification commutes with pullback functors, as can be seen through the following:

\begin{lemma}
\textbf{(Beck-Chevalley condition for heterogeneous quantification)} Suppose that the objects $(A_{\alpha})_{\alpha<\kappa}$ witness that a given category is $\kappa$-heterogeneous for a certain class $\mathcal{C}$. If all squares in the diagram:

\begin{displaymath}
\xymatrix{
B \times B_{\gamma} \ar@{->}[dd]_{h} \ar@{->}@/^2.0pc/[rrrrrr]^{g} & ... \qquad \ar@{->}[rr]_{\pi_{B, B_0, B_1}}&  & B \times B_0 \times B_1 \ar@{->}[dd]^{k_1} \ar@{->}[rr]_{\pi_{B, B_0}} & & B \times B_0 \ar@{->}[dd]^{k_0} \ar@{->}[r]_{\pi_B} & B \ar@{->}[dd]^{k}\\
 & & & & & & \\
A \times A_{\gamma} \ar@{->}@/_2.0pc/[rrrrrr]_{f} & ... \qquad \ar@{->}[rr]^{\pi_{A, A_0, A_1}} & & A \times A_0 \times A_1 \ar@{->}[rr]^{\pi_{A, A_0}} & & A \times A_0 \ar@{->}[r]^{\pi_A} & A \\
}
\end{displaymath}
\noindent
are pullbacks, then the diagram:

\begin{displaymath}
\xymatrix{
\mathcal{S}ub(B \times B_{\gamma})  \ar@{->}[rr]^{(\forall\exists)_g} &  & \mathcal{S}ub(B) \\
  & & \\
\mathcal{S}ub_{\mathcal{C}}(A \times A_{\gamma}) \ar@{->}[uu]^{h^{-1}} \ar@{->}[rr]_{(\forall\exists)_f} & & \mathcal{S}ub(A) \ar@{->}[uu]_{k^{-1}}\\
}
\end{displaymath}
\noindent
commutes, where $\mathcal{S}ub_{\mathcal{C}}(A \times A_{\gamma})$ consists of those $S \rightarrowtail A \times A_{\gamma}$ corresponding to the interpretation of formulas in $\mathcal{C}$.
\end{lemma}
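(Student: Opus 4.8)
The plan is to reduce the identity of subobjects $k^{-1}\bigl((\forall\exists)_f(S)\bigr)=(\forall\exists)_g\bigl(h^{-1}(S)\bigr)$ in $\mathcal{S}ub(B)$ to a comparison of two explicit Kripke--Joyal forcing clauses. First I would pass to the sheaf topos $\mathcal{S}h(\mathcal{H},\tau)$, where $\tau$ is the subcanonical topology of jointly epic families of cardinality less than $\kappa^+$; the embedding $\mathcal{H}\hookrightarrow\mathcal{S}h(\mathcal{H},\tau)$ is full and faithful, preserves $\kappa$-limits and the $\kappa$-Heyting structure, and by the way the interpretation was set up (Definitions \ref{heti}, \ref{het0}, \ref{hett}) it carries $\bigvee_{C\in T^{\gamma,S}_{(\forall\exists)}}C$ to the subsheaf $(\forall\exists)_f(S)\rightarrowtail A$ of Definition \ref{hett}. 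Since in a topos the inverse-image functors $h^{-1}$, $k^{-1}$ on subobjects preserve arbitrary joins of subobjects (both adjoints exist), it suffices to prove the corresponding identity of subsheaves, and for this I would use the forcing clause displayed immediately after Definition \ref{hett}, which characterises membership in $(\forall\exists)_f(S)$ completely.

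Then I would unwind both sides. For a generalized element $c\colon C\to B$, the left-hand side asserts that $k\circ c\colon C\to A$ forces $(\forall\exists)_{\alpha<\gamma}\mathbf{x_{\alpha}}\phi$ relative to the $A$-chain $F_A\colon\gamma^{op}\to\mathcal{H}$ and the subobject $S$; the right-hand side asserts that $c$ forces the same heterogeneous formula relative to the $B$-chain $F_B$ and the subobject $h^{-1}(S)$. The internal clauses ``for every arrow $D_j\to D_{j-1}$ and every element $e_j$ of the $j$-th coordinate sheaf over $D_j$'' and ``there is a covering family $\{D_j^{i}\to D_{j-1}\}$ and elements over the $D_j^{i}$'' are indexed by arrows and covering families in the \emph{same} category $\mathcal{H}$ with the \emph{same} topology $\tau$; so once the coordinate objects on the two sides are identified, these clauses are literally identical, and there is nothing to transport.

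The crux is to extract from the hypothesis that \emph{all} squares of the ladder are pullbacks --- including the one with vertical leg $h$ --- the statement that the natural transformation $(k,k_0,k_1,\dots)\colon F_B\Rightarrow F_A$ exhibits $F_B$ as the levelwise pullback of $F_A$ along $k$, so that the $j$-th coordinate object is the fixed $A_j$ in both chains, the transition maps agree, and, passing to the limit, $h\colon B\times B_{\gamma}\to A\times A_{\gamma}$ acts as $k$ on the base factor $A$ and as the identity on each coordinate factor $A_j$; equivalently $f\circ h=k\circ g$ and $h$ commutes with the limit projections to all the $A_j$. Granting this, the bottom clause matches: along a branch $b$ with limit projections $\pi_{\alpha}\colon D^b_{\gamma}\to D_{\alpha}$ and $f_b\colon D^b_{\gamma}\to C$, post-composing $(c f_b,e_0\pi_0,e_{i_1}\pi_1,\dots)\colon D^b_{\gamma}\to B\times B_{\gamma}$ with $h$ yields exactly $(k c f_b,e_0\pi_0,e_{i_1}\pi_1,\dots)\colon D^b_{\gamma}\to A\times A_{\gamma}$, so the former factors through $h^{-1}(S)$ if and only if the latter factors through $S$. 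A transfinite induction up to $\gamma$ --- using at limit stages the $FV$-compatibility of the chain to identify the tree of branches over $C$ with the inverse limit of its truncations --- then shows that the two nested forcing conditions are equivalent stage by stage, which gives the desired identity of subsheaves, hence the commutativity of the square.

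I expect the main obstacle to be precisely the coherence step of the preceding paragraph: showing that a $\kappa$-chain of pullback squares has a pullback square as its limit and that the comparison map $h$ has the stated coordinatewise description --- this is where regularity of the site, the existence of $\kappa$-limits, and the exactness of $\kappa$-Grothendieck toposes enter --- together with the (conceptually routine but transfinitely long) bookkeeping needed to align the two nested forcing clauses branch by branch so that the induction is clean. The analogous identity for $(\exists\forall)_f$, with $S$ replaced by $\neg S$ and the dual forcing clause, follows by the same argument.
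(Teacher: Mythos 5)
The heart of your argument --- transporting the witnessing data $(D_0, e_0, \{D_1^{i_1}\}, e_{i_1}, \dots)$ back and forth across the ladder of pullback squares, level by level and through the limit stage, and observing that the terminal condition ``factors through $h^{-1}(S)$'' matches ``factors through $S$'' because the two branch morphisms differ by composition with $h$ --- is exactly what the paper does. The paper carries out this transport directly in $\mathcal{H}$ using Definition \ref{heti}: given $C \le k^{-1}((\forall\exists)_f(S))$ one builds the $B$-side data from the $A$-side data by the universal property of each pullback square (compose with $k_0, k_1, \dots$ in one direction, lift in the other), and conversely; this shows that $T^{\gamma,h^{-1}(S)}_{(\forall\exists)}$ is precisely the set of subobjects of $B$ lying below $k^{-1}((\forall\exists)_f(S))$, whence the joins agree.

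The genuine weak point is your opening reduction to the sheaf topos. You assert that the embedding of $\mathcal{H}$ into $\mathcal{S}h(\mathcal{H},\tau)$ carries the join $\bigvee_{C \in T^{\gamma,S}_{(\forall\exists)}} C$ of Definition \ref{het0} to the subsheaf $(\forall\exists)_f(S)$ of Definition \ref{hett}. This is not established anywhere: the paper only obtains preservation of heterogeneous quantification by Yoneda for the \emph{syntactic} category, as a remark following Theorem \ref{funct}, and that derivation uses the determinacy and preservation axioms, which are not hypotheses of the present lemma. Proving your assertion in general would require knowing that the family $\{C \to \bigvee_{C\in T} C\}_{C \in T}$ is $\tau$-covering --- a size issue, since $T$ may have more than $\kappa$ elements and the embedding only preserves those joins that the topology sees --- together with a separate check that a generalized element of $A$ satisfies the forcing clause of Definition \ref{hett} if and only if its image lies in $T$. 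None of this is needed: Definition \ref{heti} already states the membership condition for each individual $C \in \mathcal{S}ub(A)$ in terms of covers and arrows of $\mathcal{H}$, so your level-by-level transport can, and in the paper does, run entirely inside $\mathcal{H}$, proving the two inclusions between $k^{-1}((\forall\exists)_f(S))$ and $(\forall\exists)_g(h^{-1}(S))$ one subobject at a time. If you excise the topos detour and keep the rest, your argument coincides with the paper's.
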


\begin{proof}
Let $S \in \mathcal{S}ub_{\mathcal{C}}(A \times A_{\gamma})$, let $P$ be the subobject $(\forall\exists)_f(S) \rightarrowtail A$, let $Q$ be the subobject $k^{-1}(P) \rightarrowtail B$ and let $R$ be the subobject $h^{-1}(S) \rightarrowtail B \times B_{\gamma}$. We need to prove that $Q=(\forall\exists)_g(R)$.

Assume first that $C \leq Q$ in $\mathcal{S}ub(B)$, and let us show that $C \in T_{(\forall\exists)}^{\gamma, R}$. Since the composite $kl: C \to B \to A$ factors through $P$, by definition we have:

\begin{itemize}
\item for every arrow $f_0: D_0 \to C$ and $e_0: D_0 \to A \times A_0$ with $\pi_A e_0=klf_0$, there is a covering family $\{f_1^{i_1}: D_1^{i_1} \to D_0\}_{i_1 \in I_1}$ and arrows $e_{i_1}: D_1^{i_1} \to A \times A_0 \times A_1$ with $\pi_{A, A_0}e_{i_1}=e_0f_1^{i_1}$ such that for every $i_1 \in I_1$, every arrow $f_2^{i_1}: D_2^{i_1} \to D_1^{i_1}$ and every arrow $e_2: D_2^{i_1} \to A_2$ with $\pi_{A, A_0, A_1}e_{2}=e_0f_1^{i_1}f_2^{i_1}$, $[...]$ such that if $\pi_{\alpha}: D_{\gamma}^b \to D_{\alpha}$ are the limit projections, then the arrow $(klf_b, e_0\pi_0, e_{i_1}\pi_1, e_2\pi_2, ...): D_{\gamma}^b \to A \times A_{\gamma}$ factors through $S. \qquad (4)$
\end{itemize}
 
What we need to prove is actually that:

\begin{itemize}
\item for every arrow $f_0: D_0 \to C$ and $e'_0: D_0 \to B \times B_0$ with $\pi_B e'_0=lf_0$, there is a covering family $\{f_1^{i_1}: D_1^{i_1} \to D_0\}_{i_1 \in I_1}$ and arrows $e'_{i_1}: D_1^{i_1} \to B \times B_0 \times B_1$ with $\pi_{B, B_0}e'_{i_1}=e'_0f_1^{i_1}$ such that for every $i_1 \in I_1$, every arrow $f_2^{i_1}: D_2^{i_1} \to D_1^{i_1}$ and every arrow $e'_2: D_2^{i_1} \to A_2$ with $\pi_{A, A_0, A_1}e'_{2}=e'_0f_1^{i_1}f_2^{i_1}$, $[...]$ such that if $\pi_{\alpha}: D_{\gamma}^b \to D_{\alpha}$ are the limit projections, then the arrow $(lf_b, e'_0\pi_0, e'_{i_1}\pi_1, e'_2\pi_2, ...): D_{\gamma}^b \to B \times B_{\gamma}$ factors through $R. \qquad (5)$
\end{itemize}

But it is easy to see that $(5)$ follows directly from $(4)$. Indeed, given $f_0: D_0 \to C$ and $e'_0: D_0 \to B \times B_0$, we can define  $e_0: D_0 \to A \times A_0$ as the composite $e_0=k_0e'_0$, and then by $(4)$  there is a covering family $\{f_1^{i_1}: D_1^{i_1} \to D_0\}_{i_1 \in I_1}$ and arrows $e_{i_1}: D_1^{i_1} \to A \times A_0 \times A_1$ with $\pi_{A, A_0}e_{i_1}=e_0f_1^{i_1}=k_0e'_0f_1^{i_1}$. At this point we invoke the universal property of the pullback and get hence induced morphisms $e'_{i_1}: D_1^{i_1} \to B \times B_0 \times B_1$ with $\pi_{B, B_0}e'_{i_1}=e'_0f_1^{i_1}$. Continuing in this manner we obtain successively covering families from $(4)$, and morphisms $ e'_0, e'_{i_1}, e'_2, ...$ that induce the morphism $e'_{\gamma} := (lf_b, e'_0\pi_0, e'_{i_1}\pi_1, e'_2\pi_2, ...): D_{\gamma}^b \to B \times B_{\gamma}$. Now by $(4)$ the morphism $e_{\gamma} := (klf_b, e_0\pi_0, e_{i_1}\pi_1, e_2\pi_2, ...): D_{\gamma}^b \to A \times A_{\gamma}$ factors through $S$. But $e_{\gamma}=he'_{\gamma}$ (since both give the same morphisms when composed with the projections $\pi_{\alpha}$). Therefore, by the universal property of the pullback, $e'_{\gamma}$ must factor through $R$, which proves that $C \in T_{(\forall\exists)}^{\gamma, R}$ as we wanted.

Conversely, let $C \in T_{(\forall\exists)}^{\gamma, R}$ in $\mathcal{S}ub(B)$ and let us prove that $C \leq Q$. By the universal property of the pullback, this will follow as soon as we prove that the composite $kl: C \to B \to A$ factors through $P$. By hypothesis, the following holds:

\begin{itemize}
\item for every arrow $f_0: D_0 \to C$ and $e'_0: D_0 \to B \times B_0$ with $\pi_B e'_0=lf_0$, there is a covering family $\{f_1^{i_1}: D_1^{i_1} \to D_0\}_{i_1 \in I_1}$ and arrows $e'_{i_1}: D_1^{i_1} \to B \times B_0 \times B_1$ with $\pi_{B, B_0}e'_{i_1}=e'_0f_1^{i_1}$ such that for every $i_1 \in I_1$, every arrow $f_2^{i_1}: D_2^{i_1} \to D_1^{i_1}$ and every arrow $e'_2: D_2^{i_1} \to A_2$ with $\pi_{A, A_0, A_1}e'_{2}=e'_0f_1^{i_1}f_2^{i_1}$, $[...]$ such that if $\pi_{\alpha}: D_{\gamma}^b \to D_{\alpha}$ are the limit projections, then the arrow $(lf_b, e'_0\pi_0, e'_{i_1}\pi_1, e'_2\pi_2, ...): D_{\gamma}^b \to B \times B_{\gamma}$ factors through $R. \qquad (6)$
\end{itemize}

We need to prove, instead, that:

\begin{itemize}
\item for every arrow $f_0: D_0 \to C$ and $e_0: D_0 \to A \times A_0$ with $\pi_A e_0=klf_0$, there is a covering family $\{f_1^{i_1}: D_1^{i_1} \to D_0\}_{i_1 \in I_1}$ and arrows $e_{i_1}: D_1^{i_1} \to A \times A_0 \times A_1$ with $\pi_{A, A_0}e_{i_1}=e_0f_1^{i_1}$ such that for every $i_1 \in I_1$, every arrow $f_2^{i_1}: D_2^{i_1} \to D_1^{i_1}$ and every arrow $e_2: D_2^{i_1} \to A_2$ with $\pi_{A, A_0, A_1}e_{2}=e_0f_1^{i_1}f_2^{i_1}$, $[...]$ such that if $\pi_{\alpha}: D_{\gamma}^b \to D_{\alpha}$ are the limit projections, then the arrow $(klf_b, e_0\pi_0, e_{i_1}\pi_1, e_2\pi_2, ...): D_{\gamma}^b \to A \times A_{\gamma}$ factors through $S. \qquad (7)$
\end{itemize}

Once more, a similar argument than before shows that $(7)$ follows from $(6)$. Indeed, given $f_0: D_0 \to C$ and $e_0: D_0 \to A \times A_0$ with $\pi_A e_0=klf_0$, by the universal property of the pullback there is an induced morphism $e'_0: D_0 \to B \times B_0$ with $\pi_B e'_0=lf_0$, and then by $(6)$  there is a covering family $\{f_1^{i_1}: D_1^{i_1} \to D_0\}_{i_1 \in I_1}$ and arrows $e'_{i_1}: D_1^{i_1} \to B \times B_0 \times B_1$ with $\pi_{B, B_0}e'_{i_1}=e'_0f_1^{i_1}$. Then we take $e_{i_1}: D_1^{i_1} \to A \times A_0 \times A_1$ as the composite $e_{i_1}=k_1e'_{i_1}$, since with this choice we get $\pi_{A, A_0}e_{i_1}=e_0f_1^{i_1}$. Continuing in this manner we obtain successively covering families from $(6)$, and morphisms $ e_0, e_{i_1}, e_2, ...$ that induce the morphism $e_{\gamma} := (klf_b, e_0\pi_0, e_{i_1}\pi_1, e_2\pi_2, ...): D_{\gamma}^b \to A \times A_{\gamma}$. Now by $(6)$ the morphism $e'_{\gamma} := (lf_b, e'_0\pi_0, e'_{i_1}\pi_1, e'_2\pi_2, ...): D_{\gamma}^b \to B \times B_{\gamma}$ factors through $R$. Since $e_{\gamma}=he'_{\gamma}$ (both give the same morphisms when composed with the projections $\pi_{\alpha}$), it follows that $e_{\gamma}$ factors through $S$, as we wanted. This concludes the proof.
\end{proof}

Dually, in an entirely analogous way we can prove a similar statement for the quantifier $(\exists\forall)_f$. As a consequence, we have:

\begin{cor}\label{pullback}
Given a morphism $f: A \to B$ in a $\kappa$-heterogeneous category $\mathcal{H}$ with respect to a class $\mathcal{C}$, the slices $\mathcal{H}/A$, $\mathcal{H}/B$ are $\kappa$-heterogeneous with respect to $\mathcal{C}$ and the pullback functor $f^*: \mathcal{H}/B \to \mathcal{H}/A$ preserves heterogeneous quantification.
\end{cor}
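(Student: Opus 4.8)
The strategy is to reduce the corollary to the preceding Beck--Chevalley lemma (and its $(\exists\forall)$-dual), together with the routine fact that slices of $\kappa$-Heyting categories are again $\kappa$-Heyting. For the latter: regularity, $\kappa$-limits (connected limits computed as in $\mathcal{H}$, products of fewer than $\kappa$ objects becoming fibered products over the base), stable unions of fewer than $\kappa^+$ subobjects, and the Heyting implication all descend to $\mathcal{H}/A$, and the subcanonical coverage of jointly epic families of cardinality less than $\kappa^+$ restricts to the analogous coverage on $\mathcal{H}/A$. The dictionary I will use repeatedly is that for an object $p\colon P\to A$ of $\mathcal{H}/A$ one has a canonical isomorphism $\mathcal{S}ub_{\mathcal{H}/A}(p)\cong\mathcal{S}ub_{\mathcal{H}}(P)$, under which a family is covering for $p$ in $\mathcal{H}/A$ precisely when its underlying family is covering for $P$ in $\mathcal{H}$.

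To see that $\mathcal{H}/A$ is $\kappa$-heterogeneous I would take as witnessing objects the ``constant'' objects $A_\alpha^*:=(\pi_A\colon A\times A_\alpha\to A)$. Then, for $p\colon P\to A$, the product in $\mathcal{H}/A$ of $p$ with the $A_i^*$ for $i<\alpha$ is the object $P\times\Pi_{i<\alpha}A_i$ equipped with its projection to $A$, so the diagram $F\colon\gamma^{op}\to\mathcal{H}/A$ of Definition \ref{het0} is, after applying the dictionary, exactly the diagram over $P$ in $\mathcal{H}$ with $F(\alpha)=P\times\Pi_{i<\alpha}A_i$; a subobject of the $\gamma$-th such object is a subobject $S\rightarrowtail P\times\Pi_{i<\gamma}A_i$ in $\mathcal{H}$; and an arrow $D_0\to A_\alpha^*$ over $A$ is simply an arrow $D_0\to A_\alpha$ in $\mathcal{H}$. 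With these identifications the clause of Definition \ref{heti} defining $C\in T_{(\forall\exists)}^{\gamma,S}$ in $\mathcal{H}/A$ becomes verbatim the same clause computed in $\mathcal{H}$ with base object $P$: every ingredient --- arrows into $C$, the arrows to the $A_i$, the covering families, the limit projections $\pi_\alpha$ and $f_b$ along a branch, and the factorization of $(pf_b,e_0\pi_0,e_{i_1}\pi_1,\dots)$ through $S$ --- is preserved. Hence $T_{(\forall\exists)}^{\gamma,S}$ and $T_{(\exists\forall)}^{\gamma,\neg S}$ are the same subsets of $\mathcal{S}ub_{\mathcal{H}/A}(p)=\mathcal{S}ub_{\mathcal{H}}(P)$ under either reading, so the two joins of Definition \ref{het0} exist in $\mathcal{S}ub_{\mathcal{H}/A}(p)$ because they exist in $\mathcal{S}ub_{\mathcal{H}}(P)$ by $\kappa$-heterogeneity of $\mathcal{H}$ --- provided one has checked that a subobject of $P\times\Pi_{i<\gamma}A_i$ arising as the interpretation of a formula of $\mathcal{C}$ in $\mathcal{H}/A$ also arises so in $\mathcal{H}$, so that Definition \ref{het0} for $\mathcal{H}$ is genuinely applicable. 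The same reasoning with $A$ replaced by $B$ shows that $\mathcal{H}/B$ is $\kappa$-heterogeneous.

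For preservation by $f^*\colon\mathcal{H}/B\to\mathcal{H}/A$, note first that $f^*$ carries the witnessing objects $B\times A_i\to B$ of $\mathcal{H}/B$ to the witnessing objects $A\times A_i\to A$ of $\mathcal{H}/A$, that for $p\colon P\to B$ the object $f^*p$ is the projection $A\times_B P\to A$, and that on subobjects of $P$ (respectively of $P\times\Pi_{i<\gamma}A_i$) the functor $f^*$ is pullback along $A\times_B P\to P$ (respectively along its base change $(A\times_B P)\times\Pi_{i<\gamma}A_i\to P\times\Pi_{i<\gamma}A_i$). By the identifications of the previous step, the heterogeneous quantifier of $\mathcal{H}/B$ for $p$ is the quantifier of $\mathcal{H}$ along the limit projection $P\times\Pi_{i<\gamma}A_i\to P$, and that of $\mathcal{H}/A$ for $f^*p$ is the quantifier of $\mathcal{H}$ along $(A\times_B P)\times\Pi_{i<\gamma}A_i\to A\times_B P$. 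Since the ladder of projections over $A\times_B P$ sits over the ladder of projections over $P$ through a chain of pullback squares, each a base change of the pullback square defining $f^*p$, the Beck--Chevalley lemma --- taken with the objects called $A$ and $B$ there equal to $P$ and $A\times_B P$, the witnessing objects there both equal to $A_i$, and the vertical map $k$ there equal to $A\times_B P\to P$ --- together with its $(\exists\forall)$-dual gives exactly $f^*\big((\forall\exists)^{\mathcal{H}/B}(S)\big)=(\forall\exists)^{\mathcal{H}/A}(f^*S)$ and the analogous identity for $(\exists\forall)$. By the interpretation of heterogeneous quantification recalled after Definition \ref{het0}, this is the assertion that $f^*$ preserves heterogeneous quantification.

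The step I expect to be the main obstacle is the bookkeeping in the second paragraph: one must pin down precisely how the class $\mathcal{C}$ of formulas is interpreted in a slice and verify that these interpretations land among the $\mathcal{C}$-subobjects of $P\times\Pi_{i<\gamma}A_i$ that Definition \ref{het0} invokes for $\mathcal{H}$ --- otherwise $\kappa$-heterogeneity of $\mathcal{H}$ cannot be cited --- and one must confirm that the constant objects $A_i^*$, after forming fibered products over the base, really do reconstruct the diagrams $F\colon\gamma^{op}\to\mathcal{H}$ of Definition \ref{het0} with $F(\alpha)=P\times\Pi_{i<\alpha}A_i$ together with the correct limit projections, so that Definition \ref{heti} transfers without change. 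Everything else is a direct application of the Beck--Chevalley lemma already proved.
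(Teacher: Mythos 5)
Your proposal is correct and follows exactly the route the paper intends: the paper offers no proof of this corollary, presenting it as an immediate consequence of the Beck--Chevalley lemma and its $(\exists\forall)$-dual, and your argument is a careful unwinding of that derivation (slices of $\kappa$-Heyting categories are $\kappa$-Heyting, the witnessing objects $A\times A_\alpha\to A$ make the $T$-sets in the slice coincide with those computed in $\mathcal{H}$ over the domain object, and the lemma applied to the pullback ladder over $A\times_B P\to P$ gives preservation under $f^*$). The bookkeeping caveat you flag about interpretations of $\mathcal{C}$-formulas in slices is real but is glossed over by the paper as well.
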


Finally, we have:

\begin{defs}\label{het}
 A structure in a $\kappa$-heterogeneous category is called well-determined with respect to a class of formulas $\mathcal{C}$ if the interpretation of each $(\forall\exists)_{\alpha<\gamma}\mathbf{x_{\alpha}} \phi$ for $\phi \in \mathcal{C}$ satisfies the axioms $10$ and $11$ of Definition \ref{sfol}, that is, the preservation axioms and the axiom of determinacy.
\end{defs}

We immediately get now:

\begin{lemma}\label{soundness}
$\kappa$-heterogeneous logic (for a class $\mathcal{C}$) is sound with respect to well-determined models in $\kappa$-heterogeneous categories (for the same class $\mathcal{C}$).
\end{lemma}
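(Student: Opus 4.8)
The plan is to argue by induction on the height of derivations: I would check that every axiom of Definition~\ref{sfol} holds in an arbitrary well-determined model $M$ in a $\kappa$-heterogeneous category $\mathcal{H}$ (for the class $\mathcal{C}$), and that every rule preserves validity, a sequent $\phi\vdash_{\mathbf{x}}\psi$ being valid in $M$ precisely when the interpretation of $\phi$ is below that of $\psi$ in the subobject lattice attached to the context $\mathbf{x}$. The first block of cases---the structural rules, the equality axioms, and the conjunction, disjunction, implication, existential, universal and transfinite transitivity rules (items $1$--$8$ of Definition~\ref{sfol})---carries over verbatim: a $\kappa$-heterogeneous category is in particular a $\kappa$-Heyting category, and the soundness of precisely these axioms and rules with respect to such categories was established in \cite{espindola} and \cite{espindola1}; the possible occurrence of heterogeneous subformulas, interpreted by the joins of Definition~\ref{het0}, plays no role in those arguments.

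The one classical rule that genuinely interacts with heterogeneous quantification is the substitution rule, since substituting terms into a formula $(\forall\exists)_{\alpha<\gamma}\mathbf{x_{\alpha}}\phi$ must be modelled by pulling back the join of Definition~\ref{het0} along the induced morphism of contexts. Here I would appeal to the Beck--Chevalley condition for heterogeneous quantification proved above, together with Corollary~\ref{pullback}: these say precisely that the interpretation of a heterogeneous formula commutes with pullback functors, which both makes the interpretation in a given context well-defined and validates the substitution rule.

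It then remains to treat items $9$--$11$. The preservation axioms (item $10$) and the axiom of determinacy (item $11$) hold in $M$ by the very definition of a well-determined structure for $\mathcal{C}$ (Definition~\ref{het}), so there is nothing to verify. For the heterogeneous axioms (item $9$) I would unwind the tree-indexed descriptions of the sets $T^{\gamma,S}_{(\forall\exists)}$ and $T^{\gamma,S}_{(\exists\forall)}$ of Definition~\ref{heti}. The axiom $(\forall\exists)_{\alpha<\gamma}\mathbf{x_{\alpha}}\phi\vdash_{\mathbf{x}}\forall\mathbf{x_0}(\forall\exists)_{0<\alpha<\gamma}\mathbf{x_{\alpha}}\phi$ is the ``one step unfolding'' of that recursion: using the adjunction between $\forall\mathbf{x_0}$ and pullback along the projection $A\times A_0\to A$, it suffices to check that if $C\in T^{\gamma,S}_{(\forall\exists)}$ then $C\times A_0$ lies in the $T$-set attached to the tail diagram (whose join interprets $(\forall\exists)_{0<\alpha<\gamma}\mathbf{x_{\alpha}}\phi$), and the witnessing data for $C$---read off by letting the first arrow $D_0\to C$ together with the element $e_0$ range over arrows into $C\times A_0$---provide exactly that. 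The nesting axiom $(\forall\exists)_{\alpha<\gamma}\mathbf{x_{\alpha}}\phi\vdash_{\mathbf{x}}(\forall\exists)_{\alpha<\beta}\mathbf{x_{\alpha}}(\forall\exists)_{\beta\leq\alpha<\gamma}\mathbf{x_{\alpha}}\phi$, for limit ordinals $\beta<\gamma$, I would obtain by cutting each branch of the tree over $C$ at level $\beta$: the covering families and elements chosen strictly below level $\beta$ witness membership of $C$ in the outer $T$-set at stage $\beta$ (relative to the subobject interpreting the inner heterogeneous formula), while along each branch the data chosen from level $\beta$ onward witness that the corresponding fibre lies in the inner $T$-set at stage $\gamma-\beta$; the hypothesis that $\beta$ is a limit is precisely what makes the even/odd conventions of the notational setup line up with the alternation of quantifiers at that stage. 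The two dual axioms for $(\exists\forall)$ follow from the evidently dual arguments applied to part~$(2)$ of Definition~\ref{heti}. Assembling the cases completes the induction.

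I expect the main obstacle to be item~$9$: matching the bookkeeping of the tree-indexed joins to the syntactic unfolding, and in particular getting the limit-ordinal even/odd conventions to interact correctly with the quantifier alternation at stage $\beta$ in the nesting axiom. Everything else is either imported from the predecessor papers or immediate from the definition of well-determinedness.
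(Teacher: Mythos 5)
Your proposal is correct and follows essentially the same decomposition as the paper's (one-sentence) proof: items $1$--$8$ are inherited from soundness of $\kappa$-Heyting logic, items $10$--$11$ hold by the very definition of well-determinedness (Definition~\ref{het}), and item $9$ requires a direct verification against Definition~\ref{heti}, which the paper calls ``straightforward'' and you actually sketch. Your additional remark that the substitution rule applied to heterogeneous formulas rests on the Beck--Chevalley lemma and Corollary~\ref{pullback} is a point the paper leaves implicit, and is a welcome clarification rather than a deviation.
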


\begin{proof}
 This follows from soundness of $\kappa$-Heyting logic together with Definition \ref{het} and the straightforward verification that the heterogeneous axioms $10$ of Definition \ref{sfol} are satisfied in any $\kappa$-heterogeneous category.
\end{proof}

From now on we will often omit the reference to the class $\mathcal{C}$, which will be always understood to contain at least the subformulas of the non-logical axioms of the theory to which heterogeneous quantification is applied, and also to be the same in the syntax and the semantic sides.

\subsection{Syntactic $\kappa$-heterogeneous categories and functorial semantics}

The reason why we added the preservation axioms and the axiom of determinacy to the system of $\kappa$-heterogeneous logic (which forced us to restrict the semantics to keep the soundness property) will now become clear when we introduce the syntactic category for that system, since those axioms will enable a functorial interpretation of the semantics in $\kappa$-heterogeneous categories. In other words, we would like to realize the interpretation in $\kappa$-heterogeneous categories functorially by using the usual syntactic construction. Consider, thus, the syntactic category of a theory in $\kappa$-heterogeneous logic, defined as usual but including also formulas involving heterogeneous quantification. We have:

\begin{thm}\label{synt}
The syntactic category $\mathcal{C}_{\mathbb{T}}$ corresponding to a theory in $\kappa$-heterogeneous logic is a $\kappa$-heterogeneous category and its canonical model is well-determined.
\end{thm}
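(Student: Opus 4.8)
The plan is to reduce the whole statement to a single computation in $\mathcal{C}_{\mathbb{T}}$, namely that the join defining heterogeneous quantification (Definition \ref{het0}) exists and is realised on the nose by the corresponding syntactic subobject. First I would recall the construction of the syntactic category: objects are the $\kappa$-heterogeneous formulas-in-context $\{\mathbf{x}\mid\phi\}$ up to renaming, morphisms are the $\mathbb{T}$-provably functional relations, and, by the standard construction (cf.\ \cite{espindola1}), this is a $\kappa$-Heyting category with $\kappa$-limits carrying the subcanonical topology $\tau$ whose covers are the jointly epic families of cardinality $<\kappa^+$; in $\mathcal{C}_{\mathbb{T}}$ such a cover corresponds exactly to a provable disjunction of existential formulas. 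I would take the witnessing objects to be $A_{\alpha}:=\{\mathbf{x_{\alpha}}\mid\top\}$, the object attached to the block of variables $\mathbf{x_{\alpha}}$; since a heterogeneous quantifier of length $\gamma<\kappa$ mentions fewer than $\kappa$ variables in total, the products $A\times\Pi_{i<\alpha}A_i$ exist, are again objects of the form $\{\mathbf{x}\,\mathbf{x_0}\cdots\mid\top\}$, and assemble via projections into the required $\kappa$-chain $F:\gamma^{op}\to\mathcal{C}_{\mathbb{T}}$.

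The key claim is: for a limit ordinal $\gamma<\kappa$ and $\phi\in\mathcal{C}$ with associated subobject $S=\{(\mathbf{x},\mathbf{x_0},\mathbf{x_1},\dots)\mid\phi\}\rightarrowtail A\times A_{\gamma}$, one has $\bigvee_{C\in T_{(\forall\exists)}^{\gamma,S}}C=\{\mathbf{x}\mid(\forall\exists)_{\alpha<\gamma}\mathbf{x_{\alpha}}\phi\}$ in $\mathcal{S}ub(A)$, and dually for $(\exists\forall)$ and $\neg S$. Granting this, the joins of Definition \ref{het0} exist, so $\mathcal{C}_{\mathbb{T}}$ is $\kappa$-heterogeneous with the $A_{\alpha}$ as chosen; and this identity says precisely that the canonical model — which already interprets each formula-in-context by its own syntactic subobject for the $\kappa$-Heyting connectives and quantifiers — does so also for the heterogeneous quantifiers, hence is the tautological model. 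Therefore a sequent holds in the canonical model iff it is provable, and since the preservation axioms and the axiom of determinacy (axioms $10$ and $11$ of Definition \ref{sfol}) are among the axioms of the calculus, they hold in the canonical model, which is thus well-determined in the sense of Definition \ref{het}.

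For the key claim I would argue both inequalities. For $\{\mathbf{x}\mid(\forall\exists)_{\alpha<\gamma}\mathbf{x_{\alpha}}\phi\}\le\bigvee_{C\in T_{(\forall\exists)}^{\gamma,S}}C$ it suffices to check that $P:=\{\mathbf{x}\mid(\forall\exists)_{\alpha<\gamma}\mathbf{x_{\alpha}}\phi\}$ itself satisfies the forcing condition of Definition \ref{heti}, i.e.\ lies in $T_{(\forall\exists)}^{\gamma,S}$: working over $P$ one has $(\forall\exists)_{\alpha<\gamma}\mathbf{x_{\alpha}}\phi$; the first heterogeneous axiom and its residual forms absorb each adversary move $e_{2n}\in A_{2n}$, the resulting formula is an existential and so yields the required (single-element) cover together with the witness $e_{i_{2n+1}}\in A_{2n+1}$, the decomposition (second heterogeneous) axioms guarantee that after the first $\beta$ moves the residual formula $(\forall\exists)_{\beta\le\alpha<\gamma}\mathbf{x_{\alpha}}\phi$ holds, and the preservation axiom $\bigwedge_{\beta<\gamma}(\forall\exists)_{\beta\le\alpha<\gamma}\mathbf{x_{\alpha}}\phi\vdash\phi$ forces the generic element through $S$ at the limit stage. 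For the reverse inequality, given $C=\{\mathbf{x}\mid\theta\}\in T_{(\forall\exists)}^{\gamma,S}$, I would invoke the axiom of determinacy to reduce to proving $\theta\wedge(\exists\forall)_{\alpha<\gamma}\mathbf{x_{\alpha}}\neg\phi\vdash\bot$; the dual heterogeneous axioms extract the adversary's moves from $(\exists\forall)_{\alpha<\gamma}\mathbf{x_{\alpha}}\neg\phi$ and feed them into the forcing condition of $T_{(\forall\exists)}^{\gamma,S}$, which at each odd stage returns a cover of size $<\kappa^+$ with the player's witnesses; assembling this transfinite tree of covers and witnesses is exactly the role of the transfinite transitivity rule (instantiated with branching cardinal $\le\kappa$ and the bar consisting of the level-$\gamma$ nodes), and along each branch the forcing condition forces $\phi$ while the dual preservation axiom forces $\neg\phi$, so rule T delivers the desired $\bot$. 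The case of $(\exists\forall)$ and $\neg S$ is entirely dual.

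I expect the main obstacle to be this key equation, and within it the reverse inequality: one must line up the transfinite tree of covering families appearing in Definition \ref{heti} with the family of premises of the transfinite transitivity rule, carefully tracking the even/odd alternation convention and the canonical contexts $\mathbf{y}_f$, checking that the cover/disjunction dictionary survives transfinitely many alternations, and verifying that the preservation axioms are exactly what bridges the passage to the limit stage $\gamma$ in both directions. By contrast the reduction of everything else — that $\mathcal{C}_{\mathbb{T}}$ is $\kappa$-Heyting, that the $A_{\alpha}$ serve, and that well-determinedness of the canonical model follows — is routine once this equation is in hand.
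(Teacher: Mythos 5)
Your proposal follows essentially the same route as the paper: the same witnessing objects $A_{\alpha}=(\mathbf{x_{\alpha}},\top)$, the same key identity $\bigvee_{C\in T_{(\forall\exists)}^{\gamma,S}}C=(\mathbf{x},(\forall\exists)_{\alpha<\gamma}\mathbf{x_{\alpha}}\phi)$ proved by the same two inequalities — heterogeneous and preservation axioms for the forward inclusion, and the axiom of determinacy plus the transfinite transitivity rule (interleaving the $(\forall\exists)$-condition for $\phi$ with the $(\exists\forall)$-condition for $\neg\phi$ to force $\bot$ along every branch of a jointly epic tree) for the reverse. The only cosmetic difference is that the paper verifies membership in $T_{(\forall\exists)}^{\gamma,[[\phi]]}$ for every $C\leq(\forall\exists)_{\alpha<\gamma}\mathbf{x_{\alpha}}\phi$ rather than just for the subobject itself.
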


\begin{proof}
Let $A=(\mathbf{x}, \top)$, $A_{\alpha}=(\mathbf{x_{\alpha}}, \top)$. We will show that if the interpretation $[[\phi]]$ of a formula in context $(\mathbf{x} \mathbf{x_{\gamma}}, \phi)$ (where $\phi \in \mathcal{C}$ and $\mathbf{x_{\gamma}}= \cup_{\alpha<\gamma}\mathbf{x_{\alpha}})$ is given by the subobject $(\mathbf{x} \mathbf{x_{\gamma}}, \phi) \rightarrowtail A \times A_{\gamma}$, then the joins $\bigvee_{C \in T_{(\forall\exists)}^{\gamma, [[\phi]]}}C$ and $\bigvee_{C \in T_{(\exists\forall)}^{\gamma, [[\neg \phi]]}}C$ of Definition \ref{het0} exist and are precisely the subobjects $(\mathbf{x}, (\forall\exists)_{\alpha<\gamma}\mathbf{x_{\alpha}} \phi)$ and $(\mathbf{x}, (\exists\forall)_{\alpha<\gamma}\mathbf{x_{\alpha}} \neg \phi)$ of $A$. Then an easy inductive argument shows that these interpretations satisfy axioms $10$ and $11$ of Definition \ref{sfol} with respect to $\mathcal{C}$.

To prove that $\bigvee_{C \in T_{(\forall\exists)}^{\gamma, [[\phi]]}}C = (\forall\exists)_{\alpha<\gamma}\mathbf{x_{\alpha}} \phi$ and $\bigvee_{C \in T_{(\exists\forall)}^{\gamma, [[\neg \phi]]}}C = (\exists\forall)_{\alpha<\gamma}\mathbf{x_{\alpha}} \neg \phi$ as subobjects of $(\mathbf{x}, \top)$, first we show that given $l: C \rightarrowtail (\mathbf{x}, \top)$, if $C \rightarrowtail (\mathbf{x}, (\forall\exists)_{\alpha<\gamma}\mathbf{x_{\alpha}} \phi)$, it is the case that $C \in T_{(\forall\exists)}^{\gamma, [[\phi]]}$. According to Definition \ref{heti}, it is enough to show that:

\begin{itemize}
\item for every arrow $D_0 \to C$ and $e_0: D_0 \to A_0$ there is a cover $D_1 \twoheadrightarrow D_0$ and an arrow $ e_1: D_1 \to A_1$ such that for every arrow $D_2 \to D_1$ and every arrow $e_2: D_2 \to A_2$ there is a cover $D_3 \twoheadrightarrow D_2$ and an arrow $e_3: D_3 \to A_3$ such that $[...]$ such that if $\pi_{\alpha}: D_{\gamma}^b \to D_{\alpha}$ are the limit projections, then the arrow $(lf_b, e_0\pi_0, e_1\pi_1, e_2\pi_2, e_3\pi_3, ...): D_{\gamma}^b \to A \times A_{\gamma}$ factors through $[[\phi]]. \qquad (8)$
\end{itemize}

Let $\pi'_{\alpha}: A \times \Pi_{i<\alpha+1}A_i \to  A \times \Pi_{i<\alpha}A_i$ be the projection, and consider the following statement:

\begin{itemize}
\item Given the pullback $D_0 \to C$ of $\pi'_0$ along $C \rightarrowtail A$ and the composite $e_0: D_0 \to A \times A_0 \to A_0$, there is a cover $D_1 \twoheadrightarrow D_0$ and an arrow $ e_1: D_1 \to A_1$ such that, given the pullback $D_2 \to D_1$ of $\pi'_1$ along $D_1 \rightarrowtail A \times A_0 \times A_1$ and the composite $e_2: D_2 \to A \times A_0 \times A_1 \times A_2 \to A_2$, there is a cover $D_3 \twoheadrightarrow D_2$ and an arrow $e_3: D_3 \to A_3$ such that $[...]$ such that if $\pi_{\alpha}: D_{\gamma}^b \to D_{\alpha}$ are the limit projections, then the arrow $(lf_b, e_0\pi_0, e_1\pi_1, e_2\pi_2, e_3\pi_3, ...): D_{\gamma}^b \to A \times A_{\gamma}$ factors through $[[\phi]]. \qquad (9)$
\end{itemize}

It is not difficult to see (though we leave out the straightforward details) that $(9)$ implies immediately $(8)$, due to the universal property of the pullback, and so we are reduced to verify that $(9)$ holds in $\mathcal{C}_{\mathbb{T}}$. Now, since $C \leq (\forall\exists)_{\alpha<\gamma}\mathbf{x_{\alpha}} \phi$ in $\mathcal{S}ub(A)$, it is the case that the pullback $D_0 \leq (\forall\exists)_{0<\alpha<\gamma}\mathbf{x_{\alpha}} \phi$ in $\mathcal{S}ub(A \times A_0)$, as can be seen by adjunction, after using the first heterogeneous axiom. By the definition of the interpretation of existential quantification, the epi-mono factorization of the composite $\pi'_1 \circ (\forall\exists)_{1<\alpha<\gamma}\mathbf{x_{\alpha}} \phi$ gives precisely the subobject $(\forall\exists)_{0<\alpha<\gamma}\mathbf{x_{\alpha}} \phi$ in $\mathcal{S}ub(A \times A_0)$, and so the pullback of $D_0 \rightarrowtail (\forall\exists)_{0<\alpha<\gamma}\mathbf{x_{\alpha}} \phi$ along the epimorphism of the previous factorization gives a cover $D_1 \twoheadrightarrow D_0$ and and arrow $D_1 \to A \times A_0 \times A_1$ factoring through $(\forall\exists)_{1<\alpha<\gamma}\mathbf{x_{\alpha}} \phi$. We take as $e_1: D_1 \to A_1$ the composite of the arrow $D_1 \to A \times A_0 \times A_1$ with the projection $\pi_{A_1}$. 

We have arrived now at a situation entirely analogous to the beginning: we have $D_1 \leq (\forall\exists)_{1<\alpha<\gamma}\mathbf{x_{\alpha}} \phi$ in $\mathcal{S}ub(A \times A_0 \times A_1)$, and the first two quantifiers of the heterogeneous formula have disappeared. Proceeding in an entirely similar way, we can successively define corresponding covers $D_{\alpha+1} \twoheadrightarrow D_{\alpha}$ for odd $\alpha$ and arrows $E_{\alpha+1}: D_{\alpha+1} \to A_{\alpha+1}$. When reaching a limit ordinal $\beta$, we take the limit $D_{\beta}^l \to C$ along the chain $l: ...D_2 \to D_1 \to D_0 \to C$; then an easy computation shows that the arrow $(f_l, e_0\pi_0, e_1\pi_1, e_2\pi_2,...): D_{\beta} \to A \times A_{\beta} := A \times \Pi_{\alpha<\beta}A_{\alpha}$ is precisely the meet $\bigwedge_{\delta<\beta} (\forall\exists)_{\delta \leq \alpha <\gamma} \mathbf{x_{\alpha}} \phi (\mathbf{x}, \mathbf{x_0}, \mathbf{x_1}, ..., \mathbf{x_{\delta}}, \mathbf{x_{\delta+1}}, ...)$, and the third heterogeneous axiom applied to the limit ordinal $\beta$,  together with the first preservation axiom, imply that this factors through $(\forall\exists)_{\beta \leq \alpha<\gamma} \mathbf{x_{\alpha}} \phi (\mathbf{x}, \mathbf{x_0}, \mathbf{x_1}, ..., \mathbf{x_{\beta}}, \mathbf{x_{\beta+1}}, ...)$. We claim that with this choice of $D_{\alpha}$, for every $\alpha<\gamma$, the conclusion of (5) is satisfied. Indeed, it follows by a similar computation that the arrow $(lf_b, e_0\pi_0, e_1\pi_1, e_2\pi_2, e_3\pi_3, ...): D_{\gamma} \to A \times A_{\gamma}$ is precisely $\bigwedge_{\beta<\gamma} (\forall\exists)_{\beta \leq \alpha<\gamma} \mathbf{x_{\alpha}} \phi (\mathbf{x}, \mathbf{x_0}, \mathbf{x_1}, ..., \mathbf{x_{\beta}}, \mathbf{x_{\beta+1}}, ...)$, and the first preservation axiom now guarantees that this factors through $[[\phi]]$.

Dually, in an entirely analogous way we can prove that given $C \rightarrowtail (\mathbf{x}, (\exists\forall)_{\alpha<\gamma}\mathbf{x_{\alpha}} \neg \phi)$, it is the case that $C \in T_{(\exists\forall)}^{\gamma, [[\neg \phi]]}$. To finish the proof, we need to verify that any $C \in T_{(\forall\exists)}^{\gamma, [[\phi]]}$ must factor through $(\mathbf{x}, (\forall\exists)_{\alpha<\gamma}\mathbf{x_{\alpha}} \phi)$, and an analogous statement for $T_{(\exists\forall)}^{\gamma, [[\neg \phi]]}$. We prove the first, since the argument for the second is similar. Suppose, then that $C \in T_{(\forall\exists)}^{\gamma, [[\phi]]}$ and let us prove that $C \wedge (\mathbf{x}, (\forall\exists)_{\alpha<\gamma}\mathbf{x_{\alpha}} \phi) = C$. By the axiom of determinacy, $C$ must be the join $[C \wedge (\mathbf{x}, (\forall\exists)_{\alpha<\gamma}\mathbf{x_{\alpha}} \phi)] \vee [C \wedge (\mathbf{x}, (\exists\forall)_{\alpha<\gamma}\mathbf{x_{\alpha}} \neg \phi)]$, so we need to prove that $E=C \wedge (\mathbf{x}, (\exists\forall)_{\alpha<\gamma}\mathbf{x_{\alpha}} \neg \phi) = \bot$. But by what we have proved so far, it follows that $E \in T_{(\exists\forall)}^{\gamma, [[\neg \phi]]}$. Since we also have $E \in T_{(\forall\exists)}^{\gamma, [[\phi]]}$, applying both items of Definition \ref{heti} simultaneously, we get the following:

\begin{itemize}
\item there is a covering family $\{D_0^{i_0} \to E\}_{i_0 \in I_0}$ and arrows $e_{i_0}: D_0^{i_0} \to A_0$ such that, for every $i_0 \in I_0$, there is a covering family $\{D_1^{i_0, i_1} \to D_0^{i_0}\}_{i_1 \in I_1}$ and arrows $ e_{i_1}: D_1^{i_0, i_1} \to A_1$ such that, for every $i_0 \in I_0$ and $i_1 \in I_1$ there is a covering family $\{D_2^{i_0, i_1, i_2} \to D_1^{i_0, i_1}\}_{i_2 \in I_2}$ and arrows $e_{i_2}: D_2^{i_0, i_1, i_2} \to A_2$ such that $[...]$ such that if $\pi_{\alpha}: D_{\gamma}^b \to D_{\alpha}^{i_0, i_1, ...}$ are the limit projections and $r: E \rightarrowtail (\mathbf{x}, \top)$, then the arrow $(rf_b, e_{i_0}\pi_0, e_{i_1}\pi_1, e_{i_2}\pi_2, e_{i_3}\pi_3, ...): D_{\gamma}^b \to A \times  A_{\gamma}$ factors through both $[[\phi]]$ and $[[\neg \phi]]. \qquad (10)$
\end{itemize}

Now, by the transfinite transitivity property, it follows that the family of all arrows $(f_b, e_{i_0}\pi_0, e_{i_1}\pi_1, e_{i_2}\pi_2, e_{i_3}\pi_3, ...): D_{\gamma}^b \to A \times  A_{\gamma}$, when $b$ runs over all possible branches of the tree above $E$, is jointly epic. Since each $D_\gamma^b$ forces $\bot$, we conclude that $E=\bot$, as we wanted to prove.
\end{proof}

As an immediate consequence we get:

\begin{cor}\label{compc}
$\kappa$-heterogeneous logic is complete with respect to well-determined models in $\kappa$-heterogeneous categories.
\end{cor}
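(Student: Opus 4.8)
The plan is to run the standard syntactic-category (term-model) argument for completeness, with Theorem \ref{synt} supplying the only non-routine ingredient. Fix a theory $\mathbb{T}$ in $\kappa$-heterogeneous logic over some signature, together with its associated class $\mathcal{C}$ of formulas (which, by the convention adopted at the end of Section 2.2, contains at least the subformulas of the non-logical axioms of $\mathbb{T}$ and is taken to be the same on the syntactic and the semantic side). Suppose a sequent $\phi \vdash_{\mathbf{x}} \psi$ is valid in every well-determined model of $\mathbb{T}$ (with respect to $\mathcal{C}$) in every $\kappa$-heterogeneous category; the goal is to show that $\phi \vdash_{\mathbf{x}} \psi$ is provable from $\mathbb{T}$.

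First I would invoke Theorem \ref{synt}: the syntactic category $\mathcal{C}_{\mathbb{T}}$ is a $\kappa$-heterogeneous category, and its canonical model $M_{\mathbb{T}}$ is well-determined with respect to $\mathcal{C}$. Moreover $M_{\mathbb{T}}$ is a model of $\mathbb{T}$, since each non-logical axiom of $\mathbb{T}$ is in particular provable and hence holds in the canonical model. Therefore $M_{\mathbb{T}}$ is among the structures over which the validity hypothesis quantifies, so $\phi \vdash_{\mathbf{x}} \psi$ holds in $M_{\mathbb{T}}$.

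Next I would unwind what holding in $M_{\mathbb{T}}$ means. By the construction of the syntactic category for the $\kappa$-Heyting fragment (as in \cite{johnstone}, D1.4), extended by the interpretation of heterogeneous quantifiers fixed just before the Remark following Definition \ref{het0}, the interpretation of a formula-in-context $(\mathbf{x},\chi)$ in $M_{\mathbb{T}}$ is the subobject $(\mathbf{x},\chi) \rightarrowtail (\mathbf{x},\top)$, and for two such subobjects one has $(\mathbf{x},\chi) \leq (\mathbf{x},\chi')$ in $\mathcal{S}ub((\mathbf{x},\top))$ if and only if $\chi \vdash_{\mathbf{x}} \chi'$ is provable in $\mathbb{T}$. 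The only point going beyond the purely $\kappa$-Heyting situation is that this bookkeeping is compatible with the formation of heterogeneous quantifiers, i.e. that $\sem{(\forall\exists)_{\alpha<\gamma}\mathbf{x_{\alpha}}\phi} = (\mathbf{x},(\forall\exists)_{\alpha<\gamma}\mathbf{x_{\alpha}}\phi)$ and dually for $(\exists\forall)$, whenever $\phi \in \mathcal{C}$; but this is exactly what Theorem \ref{synt} establishes (via the identification of the defining joins $\bigvee_{C \in T_{(\forall\exists)}^{\gamma,\sem{\phi}}}C$ and $\bigvee_{C \in T_{(\exists\forall)}^{\gamma,\sem{\neg\phi}}}C$ of Definition \ref{heti} with the syntactic subobjects). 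With that in hand, $M_{\mathbb{T}} \models (\phi \vdash_{\mathbf{x}} \psi)$ yields $(\mathbf{x},\phi) \leq (\mathbf{x},\psi)$ in $\mathcal{S}ub((\mathbf{x},\top))$, hence $\mathbb{T} \vdash (\phi \vdash_{\mathbf{x}} \psi)$, as required. Combined with the soundness of Lemma \ref{soundness}, this gives the full soundness-and-completeness biconditional.

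I do not expect a genuine obstacle at this stage: all the substantive work has already been carried out in Lemma \ref{soundness} and, above all, in Theorem \ref{synt}, which exhibits $\mathcal{C}_{\mathbb{T}}$ as a well-determined model whose satisfaction relation reflects provability. The only thing requiring care is the handling of the class $\mathcal{C}$ — one must keep $\mathcal{C}$ fixed and the same on both sides, and large enough (closed under the subformulas of the axioms of $\mathbb{T}$) so that $M_{\mathbb{T}}$ is genuinely a model of $\mathbb{T}$ and the heterogeneous axioms $10$ and $11$ are applicable where needed; this is precisely the content of the convention stated after Lemma \ref{soundness}.
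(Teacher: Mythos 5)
Your proof is correct and follows exactly the route the paper intends: the corollary is stated there as an immediate consequence of Theorem \ref{synt}, and your write-up simply makes explicit the standard syntactic-category argument (the canonical model is well-determined, is a model of $\mathbb{T}$, and reflects provability, so validity in all well-determined models implies provability). Nothing is missing; you have just spelled out what the paper leaves implicit.
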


The construction of the $\kappa$-heterogeneous syntactic category enables a functorial interpretation of semantics. Given a well-determined model $M$ of a theory $\mathbb{T}$ in a $\kappa$-heterogeneous category $\mathcal{D}$, there is an evident functor $F_M: \mathcal{C}_{\mathbb{T}} \to \mathcal{D}$ whose object part send the formula in context $(\mathbf{x}, \phi)$ to the interpretation $[[\phi(\mathbf{x})]]^{M}$ of $\phi(\mathbf{x})$ in $M$. Then $F_M$ is clearly a $\kappa$-Heyting functor that preserves heterogeneous quantification, that is: 

$$[[(\forall\exists)_{\alpha<\gamma}\mathbf{x_{\alpha}} \phi(\mathbf{x}, \mathbf{x_{\alpha}})]]^M = \bigvee_{C \in T_{(\forall\exists)}^{\gamma, [[\phi]]^M}}C$$

$$[[(\exists\forall)_{\alpha<\gamma}\mathbf{x_{\alpha}} \neg \phi(\mathbf{x}, \mathbf{x_{\alpha}})]]^M = \bigvee_{C \in T_{(\exists\forall)}^{\gamma, [[\neg \phi]]^M}}C$$
\noindent

But more is true: we shall prove that any $\kappa$-Heyting functor $F_M: \mathcal{C}_{\mathbb{T}} \to \mathcal{D}$ to a $\kappa$-heterogeneous category must automatically preserve heterogeneous quantification, as a direct consequence of the axiom of determinacy and the preservations axioms (whence the name of the latter). This is the non-trivial part of the following:

\begin{thm}\label{funct}
Let $(A_{\alpha})_{\alpha<\kappa}$ witness that a category $\mathcal{D}$ is $\kappa$-heterogeneous. Then well-determined models $M$  in $\mathcal{D}$ of a $\kappa$-heterogeneous theory $\mathbb{T}$ correspond precisely to $\kappa$-Heyting functors $F_M: \mathcal{C}_{\mathbb{T}} \to \mathcal{D}$ such that $F((\mathbf{x_{\alpha}}, \top))=A_{\alpha}$.
\end{thm}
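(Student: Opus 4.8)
The plan is to set up the correspondence in both directions; the only nontrivial point is that every $\kappa$-Heyting functor $F\colon\mathcal{C}_{\mathbb{T}}\to\mathcal{D}$ with $F((\mathbf{x_{\alpha}},\top))=A_{\alpha}$ automatically preserves heterogeneous quantification, and hence transports the canonical (well-determined) model of $\mathcal{C}_{\mathbb{T}}$ to a well-determined model in $\mathcal{D}$. For the easy direction, given a well-determined model $M$ in $\mathcal{D}$ I would define $F_M$ on the enlarged syntactic category by $(\mathbf{x},\phi)\mapsto[[\phi]]^M$ on objects and by $[[\theta]]^M$ on the morphism presented by a provably functional relation $\theta$. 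By Lemma \ref{soundness}, $M$ validates every sequent provable in $\mathbb{T}$, so the functionality conditions on each $\theta$ hold in $M$ and $F_M$ is a well-defined functor; since $M$ is in particular a model of the $\kappa$-Heyting reduct of $\mathbb{T}$, $F_M$ is a $\kappa$-Heyting functor, and $F_M((\mathbf{x_{\alpha}},\top))=A_{\alpha}$ by the choice of witnessing objects. This is the usual functorial semantics of $\kappa$-Heyting logic (cf.\ \cite{espindola}).

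For the substantial direction, given such an $F$ let $M$ be the structure in $\mathcal{D}$ interpreting the blocks $\mathbf{x_{\alpha}}$ as $A_{\alpha}$ and the function and relation symbols as the $F$-images of their interpretations in the canonical model of $\mathcal{C}_{\mathbb{T}}$; since $F$ is $\kappa$-Heyting, $[[\psi]]^M=F((\mathbf{x},\psi))$ for every $\kappa$-Heyting formula $\psi$. Write $A=(\mathbf{x},\top)$, $S=[[\phi]]=(\mathbf{x}\mathbf{x_{\gamma}},\phi)$, $P=(\mathbf{x},(\forall\exists)_{\alpha<\gamma}\mathbf{x_{\alpha}}\phi)$ and $N=(\mathbf{x},(\exists\forall)_{\alpha<\gamma}\mathbf{x_{\alpha}}\neg\phi)$. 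Everything reduces to the claim that, for $\phi\in\mathcal{C}$, the subobjects $\bigvee_{C\in T^{\gamma,F(S)}_{(\forall\exists)}}C$ and $\bigvee_{C\in T^{\gamma,\neg F(S)}_{(\exists\forall)}}C$ of $F(A)$ (the sets $T$ being those of Definition \ref{heti} computed in $\mathcal{D}$ with ambient object $F(A)$) exist and equal $F(P)$ and $F(N)$ respectively. From Theorem \ref{synt} and its proof we know, in $\mathcal{C}_{\mathbb{T}}$: $P\in T^{\gamma,S}_{(\forall\exists)}$, $N\in T^{\gamma,\neg S}_{(\exists\forall)}$, and moreover $P\vee N=A$ and $P\wedge N=\bot$. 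A $\kappa$-Heyting functor preserves pullbacks, regular epimorphisms and jointly epic families of cardinality ${<}\kappa^{+}$, images, Heyting implication (hence negation) and limits of chains of length ${<}\kappa$; applying $F$ to the canonical tower of pullbacks and covers built in the proof of Theorem \ref{synt} to witness $P\in T^{\gamma,S}_{(\forall\exists)}$ produces the corresponding canonical tower with respect to $F(S)$ in $\mathcal{D}$, and the ``canonical data implies arbitrary test'' reduction of that proof (valid in any category with the requisite limits) upgrades it to $F(P)\in T^{\gamma,F(S)}_{(\forall\exists)}$; dually $F(N)\in T^{\gamma,\neg F(S)}_{(\exists\forall)}$, using $F(\neg S)=\neg F(S)$. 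Hence $F(P)$ and $F(N)$ are below the respective joins.

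For the reverse inequalities, since $F$ preserves $\wedge$, $\vee$ and $\bot$, from $P\vee N=A$ and $P\wedge N=\bot$ we obtain $F(P)\vee F(N)=F(A)$ and $F(P)\wedge F(N)=\bot$, so $F(P)=\neg F(N)$ and $F(N)=\neg F(P)$ in the Heyting algebra $\mathcal{S}ub_{\mathcal{D}}(F(A))$. Given any $C\in T^{\gamma,F(S)}_{(\forall\exists)}$, both $T^{\gamma,F(S)}_{(\forall\exists)}$ and $T^{\gamma,\neg F(S)}_{(\exists\forall)}$ are downward closed, so $C\wedge F(N)$ lies in both; interleaving the covers provided by one membership with the universal quantifications of the other — precisely the argument of item $(10)$ in the proof of Theorem \ref{synt}, now carried out in $\mathcal{D}$ — produces for $C\wedge F(N)$ a tree of covers every branch of which yields an arrow $D_{\gamma}^{b}\to F(A)\times A_{\gamma}$ factoring through $F(S)\wedge\neg F(S)=\bot$. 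By strictness of the initial object each $D_{\gamma}^{b}$ is initial, and by the transfinite transitivity property of the jointly-epic ${<}\kappa^{+}$ topology on $\mathcal{D}$ the family $\{D_{\gamma}^{b}\to C\wedge F(N)\}_{b}$ is jointly epic, whence $C\wedge F(N)=\bot$ and $C\le\neg F(N)=F(P)$. Taking joins over all such $C$ gives $\bigvee_{C\in T^{\gamma,F(S)}_{(\forall\exists)}}C\le F(P)$, hence equality; dually for $N$. Consequently $[[\psi]]^M=F((\mathbf{x},\psi))$ for every formula $\psi$ of the language, by induction on its construction, the heterogeneous-quantifier step being the claim just proved, legitimately applied because heterogeneous quantification in $\mathbb{T}$ is used only on formulas of $\mathcal{C}$.

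It follows that $M$ is a $\kappa$-heterogeneous model of $\mathbb{T}$ in $\mathcal{D}$; and since the canonical model of $\mathcal{C}_{\mathbb{T}}$ satisfies the preservation axioms and the axiom of determinacy (Theorem \ref{synt}), and $F$ preserves all the $\kappa$-Heyting operations and the heterogeneous quantifiers occurring in those axioms, $M$ is well-determined. One checks immediately that $F_{M}=F$ and that the two passages are mutually inverse, establishing the correspondence. I expect the main obstacle to be the bookkeeping in transporting the tree of covers through $F$ and splicing it with the ``canonical-test'' reduction, together with invoking the transfinite transitivity argument in $\mathcal{D}$ at objects such as $C\wedge F(N)$ that need not come from $\mathcal{C}_{\mathbb{T}}$ — this last relies on transfinite transitivity holding for the canonical topology of every $\kappa$-Heyting category (cf.\ \cite{espindola1}).
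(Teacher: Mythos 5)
Your proposal is correct and follows essentially the same route as the paper: the inequality $F(P)\le\bigvee_{C\in T^{\gamma,F(S)}_{(\forall\exists)}}C$ (and its dual) via the heterogeneous and preservation axioms, then the axiom of determinacy plus the disjointness of the two joins — established by the interleaving/transfinite-transitivity argument of statement $(10)$ in Theorem \ref{synt} — to force equality. The only differences are cosmetic: you transport the canonical tower from $\mathcal{C}_{\mathbb{T}}$ and invoke the ``canonical implies arbitrary'' reduction in $\mathcal{D}$ where the paper reruns the forcing argument directly for arbitrary generalized elements, and you phrase the reverse inequality as $C\le\neg F(N)=F(P)$ rather than as disjointness of the two joins covering $A$.
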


\begin{proof}
Suppose we have a $\kappa$-Heyting functor $F: \mathcal{C}_{\mathbb{T}} \to \mathcal{D}$ to a $\kappa$-Heyting category such that $A_{\alpha}=F((\mathbf{x_{\alpha}}, \top))$, and let $a: C \to A$. Using successively the heterogeneous axioms together with the fact that $F_M$ is $\kappa$-Heyting, we get the following implications:

$$C \Vdash F((\forall\exists)_{\alpha<\gamma}\mathbf{x_{\alpha}} \phi (a, \mathbf{x_0}, \mathbf{x_1}, ...))$$

$$\implies \text{for all $D_0 \to C, e_0: D_0 \to A_0$ we have that $D_0 \Vdash F((\forall\exists)_{0<\alpha<\gamma}\mathbf{x_{\alpha}} \phi (a, e_0, \mathbf{x_1}, ...))$}$$

$$\implies \text{there is a covering family $\{D_1^{i_1} \to D_0\}_{i_1 \in I_1}$ and arrows $ e_{i_1}: D_1^{i_1} \to A_1$} $$
$$\text{such that for every $i_1 \in I_1$ we have $D_1^{i_1} \Vdash F((\forall\exists)_{1<\alpha<\gamma}\mathbf{x_{\alpha}} \phi (a, e_0, e_{i_1}, ...))$}$$

$$\implies [...]$$

$$\implies \text{the limit $D_{\gamma}^b$ satisfies $D_{\gamma}^b \Vdash F\left(\bigwedge_{\beta<\gamma} (\forall\exists)_{\beta \leq \alpha<\gamma} \mathbf{x_{\alpha}} \phi (a f_b, e_0 \pi_0, e_{i_1} \pi_1, ..., \mathbf{x_{\beta}}, ...)\right)$}$$

By the first preservation axiom, it follows that $D_{\gamma}^b \Vdash F(\phi (a f_b, e_0 \pi_0, e_{i_1} \pi_1, ...))$, and so, by definition, that $C \in T_{(\forall\exists)}^{\gamma, F(\phi (a, ...))}$. Whence, $C \Vdash \bigvee_{C \in T_{(\forall\exists)}^{\gamma, F(\phi(a, ...))}}C$. Therefore, since $a: C \to A$ was arbitrary, it follows that $F((\forall\exists)_{\alpha<\gamma}\mathbf{x_{\alpha}} \phi(\mathbf{x}, \mathbf{x_{\alpha}})) \leq \bigvee_{C \in T_{(\forall\exists)}^{\gamma, F(\phi)}}C$.

Dually, an analogous argument shows that $F((\exists\forall)_{\alpha<\gamma}\mathbf{x_{\alpha}} \neg \phi(\mathbf{x}, \mathbf{x_{\alpha}})) \leq \bigvee_{C \in T_{(\exists\forall)}^{\gamma, F(\neg \phi)}}C$. By the axiom of determinacy, the union of the subobjects $F((\forall\exists)_{\alpha<\gamma}\mathbf{x_{\alpha}} \phi(\mathbf{x}, \mathbf{x_{\alpha}}))$ and  $F((\exists\forall)_{\alpha<\gamma}\mathbf{x_{\alpha}} \neg \phi(\mathbf{x}, \mathbf{x_{\alpha}}))$ is the whole of $A$. But the subobjects $\bigvee_{C \in T_{(\forall\exists)}^{\gamma, F(\phi)}}C$ and $\bigvee_{C \in T_{(\exists\forall)}^{\gamma, F(\neg \phi)}}C$ are disjoint, as can be seen by applying to their intersection $E$ the same argument as in statement $(10)$ in the proof of Theorem \ref{synt}. Hence, this forces the sign $\leq$ to be an equality in both cases, as we wanted to prove.
\end{proof}

It follows in particular that when considering the topology $\tau$ on the syntactic category $\mathcal{C}_{\mathbb{T}}$ given by jointly epic morphisms of cardinality less than $\kappa^+$, Yoneda embedding $Y: \mathcal{C}_{\mathbb{T}} \to \mathcal{S}h(\mathcal{C}_{\mathbb{T}}, \tau)$, being $\kappa$-Heyting (see \cite{espindola}), preserves heterogeneous quantification. This gives us the following:

\begin{cor}\label{compt}
$\kappa$-heterogeneous logic is complete with respect to well-determined models in $\kappa$-Grothendieck toposes.
\end{cor}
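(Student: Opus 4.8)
The plan is to prove completeness by transporting the canonical model along the Yoneda embedding, as the remark preceding the statement already signals. First I would recall from Theorem \ref{synt} that the syntactic category $\mathcal{C}_{\mathbb{T}}$ of a $\kappa$-heterogeneous theory $\mathbb{T}$ is a $\kappa$-heterogeneous category whose canonical model $M_{\mathbb{T}}$ is well-determined, and that in $M_{\mathbb{T}}$ a formula in context $(\mathbf{x}, \phi)$ is interpreted as the subobject $(\mathbf{x}, \phi) \rightarrowtail (\mathbf{x}, \top)$. Hence if a sequent $\phi \vdash_{\mathbf{x}} \psi$ is not provable from the axioms of $\mathbb{T}$, then $(\mathbf{x}, \phi) \not\leq (\mathbf{x}, \psi)$ in $\mathcal{S}ub((\mathbf{x}, \top))$, so $M_{\mathbb{T}}$ is a well-determined model refuting the sequent; the only gap is that it lives in $\mathcal{C}_{\mathbb{T}}$ rather than, a priori, in a $\kappa$-Grothendieck topos.

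Second, I would take the subcanonical coverage $\tau$ on $\mathcal{C}_{\mathbb{T}}$ given by jointly epic families of cardinality less than $\kappa^+$ and invoke \cite{espindola} (and \cite{espindola1}): the site $(\mathcal{C}_{\mathbb{T}}, \tau)$ satisfies all the requirements in the definition of a $\kappa$-Grothendieck topos (it is regular, has $\kappa$-limits and stable unions of less than $\kappa^+$ many subobjects, and $\tau$ is transfinitely transitive), so $\mathcal{S}h(\mathcal{C}_{\mathbb{T}}, \tau)$ is one, and the Yoneda embedding $Y \colon \mathcal{C}_{\mathbb{T}} \to \mathcal{S}h(\mathcal{C}_{\mathbb{T}}, \tau)$ is a full and faithful $\kappa$-Heyting functor. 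I would then note that every $\kappa$-Grothendieck topos is $\kappa$-heterogeneous: for any sequence of objects the subsheaves $(\forall\exists)_f(S)$ and $(\exists\forall)_f(S)$ of Definition \ref{hett} exist, and by inspecting Definition \ref{heti} they are exactly the joins $\bigvee_{C \in T_{(\forall\exists)}^{\gamma, S}}C$ and $\bigvee_{C \in T_{(\exists\forall)}^{\gamma, \neg S}}C$ demanded by Definition \ref{het0}; in particular $\mathcal{S}h(\mathcal{C}_{\mathbb{T}}, \tau)$ is $\kappa$-heterogeneous with witnessing objects $A_{\alpha} := Y((\mathbf{x_{\alpha}}, \top))$. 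Now Theorem \ref{funct} applies to the $\kappa$-Heyting functor $Y$, which sends $(\mathbf{x_{\alpha}}, \top)$ to $A_{\alpha}$, and tells us that $Y$ preserves heterogeneous quantification; a routine induction on the complexity of formulas then shows that $Y \circ M_{\mathbb{T}}$ interprets $(\mathbf{x}, \phi)$ as $Y((\mathbf{x}, \phi))$ and is a well-determined model of $\mathbb{T}$ in the $\kappa$-Grothendieck topos $\mathcal{S}h(\mathcal{C}_{\mathbb{T}}, \tau)$.

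Finally, since $Y$ is full and faithful and $\tau$ is subcanonical, $Y$ reflects the ordering of subobjects, so $(\mathbf{x}, \phi) \not\leq (\mathbf{x}, \psi)$ yields $Y((\mathbf{x}, \phi)) \not\leq Y((\mathbf{x}, \psi))$; thus $Y \circ M_{\mathbb{T}}$ still refutes the unprovable sequent, which is the desired countermodel in a $\kappa$-Grothendieck topos, and completeness follows. The transport argument itself is purely formal; the facts requiring care are the two imported from \cite{espindola} --- that $(\mathcal{C}_{\mathbb{T}}, \tau)$ is genuinely a $\kappa$-Grothendieck site and that $Y$ is $\kappa$-Heyting --- together with the one step specific to this paper, namely using Theorem \ref{funct} to upgrade ``$Y$ is $\kappa$-Heyting'' to ``$Y$ preserves heterogeneous quantification''. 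I expect this last upgrade to be the main conceptual point, since it is exactly where the preservation axioms and the axiom of determinacy are used; everything else is bookkeeping.
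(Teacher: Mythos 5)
Your proposal is correct and follows essentially the same route as the paper, which derives the corollary in the sentence preceding it: equip $\mathcal{C}_{\mathbb{T}}$ with the topology of jointly epic families of size less than $\kappa^+$, note that the Yoneda embedding into $\mathcal{S}h(\mathcal{C}_{\mathbb{T}}, \tau)$ is $\kappa$-Heyting by \cite{espindola}, and invoke Theorem \ref{funct} to upgrade this to preservation of heterogeneous quantification. You have merely made explicit the bookkeeping the paper leaves implicit (conservativity of $Y$ via full faithfulness, and that the sheaf topos is $\kappa$-heterogeneous with witnesses $Y((\mathbf{x_{\alpha}}, \top))$), and you correctly identify Theorem \ref{funct} as the one nontrivial ingredient.
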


\section{Completeness}

The techniques so far have exploited the preservation axiom and the axiom of determinacy to deal with heterogeneous quantification. The same arguments, that should be by now familiar to the reader, can be used to prove several completeness results, as we now proceed to do.

\subsection{Completeness of classical $\kappa$-heterogeneous logic}

We have so far established completeness theorems for $\kappa$-heterogeneous logic in terms of well-determined models in $\kappa$-heterogeneous categories, and even in (more specifically) $\kappa$-Grothendieck toposes. We will now proceed to study completeness in terms of well-determined set-valued models. Naturally, it is not possible to have such a theorem since the law of excluded middle (i.e., the sequents $\top \vdash_{\mathbf{x}} \phi(\mathbf{x}) \vee \neg \phi(\mathbf{x})$) is valid in set models, though is not part of the axioms. However, we will see that this is the only constraint. As soon as we add all instances of excluded middle, getting thus classical $\kappa$-heterogeneous logic, valid sentences in well-determined structures are provable from the axioms. 

In the same way as $\kappa$-first-order logic can be \emph{Morleyized} (i.e., we can define a Morita-equivalent theory in a less expressive fragment) to get a theory in $\kappa$-coherent logic (see \cite{espindola}), so $\kappa$-heterogeneous logic has its own Morleyization. The formulas of this latter are obtained inductively by allowing conjunctions of less than $\kappa$ many subformulas, disjunctions of less than $\kappa^+$ many subformulas, and existential quantification on less than $\kappa$ many variables. Given a theory $\mathbb{T}$, we denote by $S$ the set of subformulas of antecedents and consequents of the axioms of $\mathbb{T}$ (we include in $S$ even subformulas of each $(\forall\exists)_{\alpha<\gamma}\mathbf{x_{\alpha}} \phi$ of the form $(\forall\exists)_{\beta \leq \alpha<\gamma}\mathbf{x_{\alpha}} \phi$, and analogously with quantifiers of the type $(\exists\forall)$). The corresponding Morleyized heterogeneous theory is defined in the following:

\begin{defs}
Given a classical $\kappa$-heterogeneous theory $\mathbb{T}$, we define its Morleyized theory $\theory^m$, over a signature $\Sigma^m$ that extends the original signature $\Sigma$ by adding for each $\kappa$-first-order formula $\phi \in S$ over $\Sigma$ with free variables \alg{x} two new relation symbols $C_{\phi}(\alg{x})$ and $D_{\phi}(\alg{x})$, and whose axioms are:

\begin{enumerate}[(i)]

\item $C_{\phi} \wedge D_{\phi} \vdash_{\alg{x}} \bot$;
\item $\top \vdash_{\alg{x}} C_{\phi} \vee D_{\phi}$;
\item $C_{\phi}\dashv\vdash_{\alg{x}}\phi$ for every atomic formula $\phi$;
\item $C_{\phi}\vdash_{\alg{x}}C_{\psi}$ for every axiom $\phi\vdash_{\alg{x}}{\psi}$ of \theory (including logical axioms);
\item $D_{\bigwedge_{i<\gamma}\phi_i}\dashv\vdash_{\alg{x}}\bigvee_{i<\gamma}D_{\phi_i}$;
\item $C_{\bigvee_{i<\gamma}\phi_i}\dashv\vdash_{\alg{x}}\bigvee_{i<\gamma}C_{\phi_i}$;
\item $C_{\phi \rightarrow \psi}\dashv\vdash_{\alg{x}}D_{\phi} \vee C_{\psi}$
\item $C_{\exists{\mathbf{y}}\phi}\dashv\vdash_{\alg{x}}\exists{\mathbf{y}}C_{\phi}$;
\item $D_{\forall{\mathbf{y}}\phi}\dashv\vdash_{\alg{x}}\exists{\mathbf{y}}D_{\phi}$;
\item $D_{(\forall\exists)_{\alpha<\gamma}{\mathbf{x_{\alpha}} \phi}}\dashv\vdash_{\alg{x}}\exists{\mathbf{x_{0}}}D_{(\forall\exists)_{0<\alpha<\gamma}{\mathbf{x_{\alpha}} \phi}}$ \qquad for each $\phi \in \mathcal{C}$;
\item $C_{(\exists\forall)_{\alpha<\gamma}{\mathbf{x_{\alpha}} \neg \phi}}\dashv\vdash_{\alg{x}}\exists{\mathbf{x_{0}}}C_{(\exists\forall)_{0<\alpha<\gamma}{\mathbf{x_{\alpha}} \neg \phi}}$ \qquad for each $\phi \in \mathcal{C}$.
\end{enumerate} 
\end{defs}

Note that these axioms are all $\kappa$-coherent and they ensure that the interpretations of $(\alg{x}, C_{\phi}(\alg{x}))$ and $(\alg{x}, D_{\phi}(\alg{x}))$ in any Boolean $\kappa$-heterogeneous category (including \Sets) will coincide with those of $(\alg{x}, \phi(\alg{x}))$ and $(\alg{x}, \neg \phi(\alg{x}))$, respectively, and that, moreover, $\theory^m$-models coincide with $\theory$-models in such categories. Furthermore, the syntactic categories $\mathcal{C}_{\mathbb{T}}$ and $\mathcal{C}_{\theory^m}$ are equivalent, as can be easily checked, and hence $\mathcal{C}_{\theory^m}$ will be a classical $\kappa$-heterogeneous category. We now have:

\begin{thm}
Classical $\kappa$-heterogeneous logic is complete with respect to set-valued models.
\end{thm}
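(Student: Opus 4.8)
The plan is to deduce the theorem from the completeness of $\kappa$-coherent logic with respect to $\Sets$-valued models --- which is available from \cite{espindola} under the standing hypothesis $\kappa^{<\kappa}=\kappa$ --- by passing to the Morleyized theory $\mathbb{T}^m$. Soundness is immediate from Lemma \ref{soundness}, since $\Sets$ is a Boolean $\kappa$-heterogeneous category whose interpretation of heterogeneous quantification is the second-order one $(2)$ by the remark after Definition \ref{het0}; so the content is completeness. Fix a classical $\kappa$-heterogeneous theory $\mathbb{T}$ and a sequent $\sigma=(\phi\vdash_{\mathbf{x}}\psi)$ valid in every well-determined $\Sets$-model of $\mathbb{T}$; after enlarging $S$ so that it contains the subformulas of $\phi$ and $\psi$ (equivalently, adjoining the trivial axioms $\phi\vdash_{\mathbf{x}}\phi$, $\psi\vdash_{\mathbf{x}}\psi$ to $\mathbb{T}$, which changes neither provability nor the class of models), write $\sigma^m := (C_{\phi}\vdash_{\mathbf{x}}C_{\psi})$, a $\kappa$-coherent sequent over $\Sigma^m$. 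The goal is $\mathbb{T}\vdash\sigma$.

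First I would use the equivalence of syntactic categories $\mathcal{C}_{\mathbb{T}}\simeq\mathcal{C}_{\mathbb{T}^m}$, under which the object $(\mathbf{x},\chi)$ is identified with $(\mathbf{x},C_{\chi})$, to obtain $\mathbb{T}\vdash\sigma \iff \mathbb{T}^m\vdash\sigma^m$: provability of a sequent in a theory is equivalent to the corresponding inclusion of subobjects holding in its syntactic category (the standard property of the syntactic construction; cf.\ also Corollary \ref{compc} applied to the canonical model, which is well-determined by Theorem \ref{synt}), and the equivalence preserves these inclusions. Second, since $\mathbb{T}^m$ is presented by $\kappa$-coherent axioms, the cited completeness theorem gives $\mathbb{T}^m\vdash\sigma^m \iff \sigma^m$ holds in every $\Sets$-model of $\mathbb{T}^m$. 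Third, I would invoke the fact recorded after the definition of $\mathbb{T}^m$ that in $\Sets$ the models of $\mathbb{T}^m$ are exactly the expansions of the well-determined $\Sets$-models of $\mathbb{T}$, with $[[C_{\chi}]]^N=[[\chi]]^M$ and $[[D_{\chi}]]^N=[[\neg\chi]]^M$ for all $\chi\in S$, so that an $\Sets$-model $N$ of $\mathbb{T}^m$ validates $\sigma^m$ precisely when its $\Sigma$-reduct $M$ validates $\sigma$. (Well-determinedness of $M$ is forced because axiom (iv) of $\mathbb{T}^m$ includes the logical axioms of $\mathbb{T}$ --- in particular the preservation axioms and the axiom of determinacy --- which, via axioms (i), (ii), (vi), translate exactly into the assertion that the second-order interpretation in $M$ satisfies axioms $10$ and $11$ of Definition \ref{sfol}; conversely a well-determined $M$ expands to a model of $\mathbb{T}^m$, axioms (x), (xi) being satisfied in $\Sets$ by the axiom of choice.) Chaining the three equivalences,
$$\mathbb{T}\vdash\sigma \;\iff\; \mathbb{T}^m\vdash\sigma^m \;\iff\; \sigma^m\text{ valid in all }\Sets\text{-models of }\mathbb{T}^m \;\iff\; \sigma\text{ valid in all well-determined }\Sets\text{-models of }\mathbb{T},$$
and the last statement is the hypothesis, whence $\mathbb{T}\vdash\sigma$.

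The one genuinely heavy ingredient is the imported completeness of $\kappa$-coherent logic for $\Sets$-valued models; this is where $\kappa^{<\kappa}=\kappa$ is really used, via the existence of enough $\Sets$-points for the pertinent $\kappa$-Grothendieck toposes. Everything else is bookkeeping, and the only subtle spot in it --- already absorbed into the cited facts $\mathcal{C}_{\mathbb{T}}\simeq\mathcal{C}_{\mathbb{T}^m}$ and ``$\mathbb{T}^m$-models $=$ well-determined $\mathbb{T}$-models in $\Sets$'' --- is that the $\kappa$-coherent axioms of $\mathbb{T}^m$, built from the ``one quantifier at a time'' unfolding axioms (x), (xi) together with the limit-stage instances of the preservation and heterogeneous axioms carried by axiom (iv), do correctly reconstruct the game-theoretic meaning of $(\forall\exists)_{\alpha<\gamma}\mathbf{x_{\alpha}}\phi$ from that of its tails $(\forall\exists)_{\beta\leq\alpha<\gamma}\mathbf{x_{\alpha}}\phi$ (which lie in $S$ as well); granted that, nothing further is required.
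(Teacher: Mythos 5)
Your proof is correct and follows essentially the same route as the paper: reduce to the Morleyized $\kappa$-coherent theory $\mathbb{T}^m$, invoke the completeness theorem for the $\kappa$-coherent fragment from \cite{espindola}, and use the identification of $\Sets$-models of $\mathbb{T}^m$ with well-determined $\Sets$-models of $\mathbb{T}$. The only (immaterial) divergence is in transferring provability back from $\mathbb{T}^m$ to $\mathbb{T}$: you route this through the equivalence $\mathcal{C}_{\mathbb{T}}\simeq\mathcal{C}_{\mathbb{T}^m}$, whereas the paper performs an explicit substitution of $\phi$ for $C_{\phi}$ and $\neg\phi$ for $D_{\phi}$ throughout the proof tree --- both facts being recorded in the paragraph preceding the theorem, and your explicit handling of the requirement that $\phi,\psi$ lie in $S$ is a point the paper leaves implicit.
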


\begin{proof}
Suppose now that $\mathbb{T}$ is a theory with at most $\kappa$ many axioms, and let $\phi \vdash_{\alg{x}} \psi$ be a sequent valid in well-determined $\mathbb{T}$-models in \Sets; it follows that $C_{\phi}\vdash_{\alg{x}}C_{\psi}$ will be valid in every $\theory^m$-model in \Sets. We shall show that then the sequent will be provable in $\theory^m$. Assuming we have done that, now replace in this proof every subformula of the form $C_{\phi}(t_1, ..., t_{\alpha}, ...)$ by the corresponding substitution instance of $\phi(\alg{t}/\alg{x})$, and every subformula of the form $D_{\phi}(t_1, ..., t_{\alpha}, ...)$ by the corresponding substitution instance of $\neg \phi(\alg{t}/\alg{x})$. We claim that this way we will get a proof in $\theory$ of the sequent $\phi \vdash_{\alg{x}} \psi$ using the rules of $\kappa$-first-order systems. Indeed, the effect of the transformation just described on the axioms of $\theory^m$ produces either axioms of $\theory$ or sequents that are classically provable from the axioms of $\theory$ and the logical axioms of classical $\kappa$-heterogeneous logic. Therefore, to prove completeness of classical $\kappa$-heterogeneous logic it is enough to show that there is a jointly conservative family of well-determined $\theory^m$-models in \Sets. But this is a consequence of the completeness theorem for $\kappa$-coherent theories (the $(\kappa^+, \kappa, \kappa)$-coherent fragment of \cite{espindola}). This finishes the proof.
\end{proof}

\begin{rmk}
It follows from the axiomatization of the Morleyized theory that the category of well-determined models of the theory is accessible, being equivalent to the category of models of a $\kappa$-coherent theory.
\end{rmk}

\subsection{Completeness of intuitionistic $\kappa$-heterogeneous logic over $\mathcal{L}_{\kappa^+, \kappa, \kappa}$}

The completeness theorem for intuitionistic first-order logic over $\mathcal{L}_{\kappa^+, \kappa, \kappa}$ (see \cite{espindola2}) can be adapted to the heterogeneous setting, since instances of excluded middle are not necessarily part of the axioms, except for those instances that correspond to instances of determinacy. In particular, any intuitionistic first-order heteerogeneous theory of cardinality at most $\kappa$ over $\mathcal{L}_{\kappa^+, \kappa, \kappa}$ admits the following Morleyization:

\begin{enumerate}[(i)]

\item $C_{(\forall\exists)_{\alpha<\gamma}\mathbf{x_{\alpha}} \phi} \wedge D_{(\forall\exists)_{\alpha<\gamma}\mathbf{x_{\alpha}} \phi} \vdash_{\alg{x}} \bot$;
\item $C_{(\exists\forall)_{\alpha<\gamma}\mathbf{x_{\alpha}} \neg \phi} \wedge D_{(\exists\forall)_{\alpha<\gamma}\mathbf{x_{\alpha}} \neg \phi} \vdash_{\alg{x}} \bot$;
\item $\top \vdash_{\alg{x}} C_{(\forall\exists)_{\alpha<\gamma}\mathbf{x_{\alpha}} \phi} \vee D_{(\forall\exists)_{\alpha<\gamma}\mathbf{x_{\alpha}} \phi}$;
\item $\top \vdash_{\alg{x}} C_{(\exists\forall)_{\alpha<\gamma}\mathbf{x_{\alpha}} \neg \phi} \vee D_{(\exists\forall)_{\alpha<\gamma}\mathbf{x_{\alpha}} \neg \phi}$;
\item $C_{\phi}\dashv\vdash_{\alg{x}}\phi$ for every atomic formula $\phi$;
\item $C_{\phi}\vdash_{\alg{x}}C_{\psi}$ for every axiom $\phi\vdash_{\alg{x}}{\psi}$ of \theory (including logical axioms);
\item $D_{\bigwedge_{i<\gamma}\phi_i}\dashv\vdash_{\alg{x}}\bigvee_{i<\gamma}D_{\phi_i}$;
\item $C_{\bigvee_{i<\gamma}\phi_i}\dashv\vdash_{\alg{x}}\bigvee_{i<\gamma}C_{\phi_i}$;
\item $C_{\exists{\mathbf{y}}\phi}\dashv\vdash_{\alg{x}}\exists{\mathbf{y}}C_{\phi}$;
\item $D_{(\forall\exists)_{\alpha<\gamma}{\mathbf{x_{\alpha}} \phi}}\dashv\vdash_{\alg{x}}\exists{\mathbf{x_{0}}}D_{(\forall\exists)_{0<\alpha<\gamma}{\mathbf{x_{\alpha}} \phi}}$ \qquad for each $\phi \in \mathcal{C}$;
\item $C_{(\exists\forall)_{\alpha<\gamma}{\mathbf{x_{\alpha}} \neg \phi}}\dashv\vdash_{\alg{x}}\exists{\mathbf{x_{0}}}C_{(\exists\forall)_{0<\alpha<\gamma}{\mathbf{x_{\alpha}} \neg \phi}}$ \qquad for each $\phi \in \mathcal{C}$.
\end{enumerate}

Set-valued models of the theory above correspond to $\kappa$-coherent functors from $\mathcal{C}_{\mathbb{T}}$ and are therefore well-determined structures in $\mathcal{S}et$. Considering now the evaluation functor $ev: \mathcal{C}_{\mathbb{T}} \to \mathcal{S}et^{\mathcal{C}oh(\mathbb{T})}$, where $\mathcal{C}oh(\mathbb{T})$ is a suitably full subcategory of coherent models and homomorphisms, Theorem \ref{funct} guarantes that $ev$ preserves heterogeneous quantification, while by the same arguments used in the proof of Theorem 2.3.2 from \cite{espindola2} we can now conclude the following:

\begin{thm}
 If $\kappa$ is regular and $\kappa^{<\kappa}=\kappa$, intuitionistic heterogeneous (with respect to a class $\mathcal{C}$) theories over $\mathcal{L}_{\kappa^+, \kappa, \kappa}$, of cardinality at most $\kappa$, are complete with respect to Kripke models, where we extend the notion of forcing adding the following clauses for heterogeneous quantification:

\begin{enumerate}
\item $p \Vdash (\forall\exists)_{\alpha<\gamma}\mathbf{x_{\alpha}} \phi(\mathbf{c})  \iff M_p \models (\forall\exists)_{\alpha<\gamma}\mathbf{x_{\alpha}} (p \Vdash \phi(\mathbf{c}))$
\item $p \Vdash (\exists\forall)_{\alpha<\gamma}\mathbf{x_{\alpha}} \neg \phi(\mathbf{c})  \iff M_p \models (\exists\forall)_{\alpha<\gamma}\mathbf{x_{\alpha}} (p \Vdash \neg \phi(\mathbf{c}))$
\end{enumerate}

where $M_p$ is the underlying structure of the node $p$ and $\phi \in \mathcal{C}$.
\end{thm}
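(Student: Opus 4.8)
The plan is to imitate the proof of Theorem 2.3.2 of \cite{espindola2} for the $(\kappa^+,\kappa,\kappa)$-coherent fragment, interposing Theorem \ref{funct} to carry the heterogeneous quantifiers through the construction. First I would pass to the Morleyization $\mathbb{T}^m$ whose axioms are the $(\kappa^+,\kappa,\kappa)$-coherent sequents (i)--(xi) listed above. Two things need to be recorded about it. On the one hand, the translation $\phi \mapsto C_\phi$ (with $D_\phi$ in place of negative occurrences) reflects provability: a sequent $\phi \vdash_{\mathbf{x}} \psi$ of $\mathbb{T}$ whose heterogeneous quantifiers are applied only to formulas of $\mathcal{C}$ is derivable in $\mathbb{T}$ iff $C_\phi \vdash_{\mathbf{x}} C_\psi$ is derivable in $\mathbb{T}^m$; the one nontrivial direction is checked as in the classical case, by verifying that replacing each $C_\chi$ by $\chi$ and each $D_\chi$ by $\neg\chi$ turns the axioms (i)--(xi) into sequents provable in $\mathbb{T}$ --- for (i)--(iv) this uses the heterogeneous axioms, the preservation axioms and the axiom of determinacy of Definition \ref{sfol}, which the intuitionistic system retains, and for (v)--(xi) it is routine. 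On the other hand, set-valued models of $\mathbb{T}^m$ are precisely the well-determined set-valued models of $\mathbb{T}$, with $[[C_\phi]]=[[\phi]]$ and $[[D_\phi]]=[[\neg\phi]]$; thus a small full subcategory $\mathcal{C}oh(\mathbb{T})$ of such models and their homomorphisms, chosen large enough to be jointly conservative, carries an evaluation functor $ev\colon \mathcal{C}_{\mathbb{T}} \to \mathcal{S}et^{\mathcal{C}oh(\mathbb{T})}$.

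Next, $\mathbb{T}^m$ being $(\kappa^+,\kappa,\kappa)$-coherent with at most $\kappa$ axioms, the completeness theorem for $\kappa$-coherent logic of \cite{espindola} applies, and the argument in the proof of Theorem 2.3.2 of \cite{espindola2} --- this is the step that uses $\kappa^{<\kappa}=\kappa$ --- produces a poset $P$ together with a conservative $\kappa$-Heyting functor $ev'\colon \mathcal{C}_{\mathbb{T}} \to \mathcal{S}et^{P}$, the effect being that a sequent that fails in $\mathcal{C}_{\mathbb{T}}$ is refuted at some world of $P$. Now $\mathcal{S}et^{P}$ is a presheaf topos, hence a $\kappa$-Grothendieck topos (it has all $\kappa$-limits and stable unions of fewer than $\kappa^+$ subobjects, and the trivial coverage vacuously satisfies the transfinite transitivity property), so by Definition \ref{hett} it is $\kappa$-heterogeneous for $\mathcal{C}$; since $ev'$ is $\kappa$-Heyting and the canonical model of $\mathbb{T}$ in $\mathcal{C}_{\mathbb{T}}$ is well-determined by Theorem \ref{synt}, Theorem \ref{funct} guarantees that $ev'$ preserves heterogeneous quantification. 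Consequently $ev'$ sends the canonical model to a well-determined model of $\mathbb{T}$ in $\mathcal{S}et^{P}$, and conservativity says that every sequent not provable in $\mathbb{T}$ fails in it.

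Finally I would identify $\mathcal{S}et^{P}$ together with this model as a Kripke model over $P$ and check that its Kripke--Joyal semantics is the forcing described in the statement. For the $\kappa$-Heyting fragment this is standard: under the trivial topology the only covers are maximal sieves, so $\exists$ and $\bigvee$ are witnessed at the node while $\to$, $\forall$ and $\bigwedge$ quantify over future nodes. For the heterogeneous quantifiers one unwinds Definition \ref{hett} over the site $P$ with trivial coverage: since every covering family is trivial, the transfinite tree built over a node $p$ is a single choice at the existential (odd) levels and branches only at the universal (even) levels, over future nodes $q' \ge q$ and the elements there; reading off the condition at the limit, and using that each $M_p$ is a well-determined $\mathbb{T}^m$-model so that heterogeneous quantification over $M_p$ has its second-order game meaning, together with the preservation axioms at limit stages (which ensure that a position from which a player is still winning belongs to its set of winning plays), one identifies $p \Vdash (\forall\exists)_{\alpha<\gamma}\mathbf{x_{\alpha}} \phi(\mathbf{c})$ with $M_p \models (\forall\exists)_{\alpha<\gamma}\mathbf{x_{\alpha}} (p \Vdash \phi(\mathbf{c}))$, and dually for $(\exists\forall)$. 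I expect this last identification --- matching, level by level and through the limit ordinals below $\gamma$, the tree of Definition \ref{hett} in $\mathcal{S}et^{P}$ with the metatheoretic heterogeneous quantifier of the forcing clauses --- to be the only genuinely new point, and the place where the restriction to $\phi \in \mathcal{C}$ and the preservation and determinacy axioms do real work, just as in the proof of Theorem \ref{synt}; everything else is either the machinery imported from \cite{espindola2} or a routine application of Theorem \ref{funct}. Completeness then follows, since an unprovable sequent is refuted in the Kripke model just produced (and soundness is the same unwinding run backwards, using soundness of the topos semantics of $\mathcal{S}et^{P}$).
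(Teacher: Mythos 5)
Your proposal follows the same route as the paper's (very terse) argument: Morleyize into the $(\kappa^+,\kappa,\kappa)$-coherent fragment via axioms (i)--(xi), use the $\kappa$-coherent completeness theorem and the evaluation functor into a presheaf topos over a suitable poset of models as in Theorem 2.3.2 of \cite{espindola2}, and invoke Theorem \ref{funct} to see that this functor automatically preserves heterogeneous quantification, so that the resulting Kripke model is well-determined and refutes any unprovable sequent. The additional detail you supply (conservativity of the Morleyization, why $\mathcal{S}et^{P}$ is $\kappa$-heterogeneous, and the unwinding of Definition \ref{hett} over the trivial coverage to recover the stated forcing clauses) is consistent with, and if anything more explicit than, what the paper records.
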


\subsection{Completeness of $\kappa$-heterogeneous logic with bounded quantifiers}

The definition of heterogeneous quantification in a Grothendieck topos required a type of heterogeneous quantification in the metatheory in which all quantifiers appeared bounded; it is thus natural to expect that the definition can be actually strengthened to cover this case. In fact, all the development of heterogeneous quantification so far can be cast, mutatis mutandi, to a setting in which we define a special quantification (where quantifiers appear bounded) denoted as $(\forall\exists^{(\psi_{\alpha})})_{\alpha<\gamma} \mathbf{x_{\alpha}} \phi$, and whose intended meaning is the following:

$$\forall \mathbf{x_0} (\psi_0(\mathbf{x_0}) \to \exists \mathbf{x_1} (\psi_1(\mathbf{x_1}) \wedge (\forall \mathbf{x_2} (\psi_2(\mathbf{x_2}) \to ... \phi (\mathbf{x_0}, \mathbf{x_1}, \mathbf{x_2}, ...)...)$$
\\
proceeding similarly with the dual quantification. The axioms of heterogeneous logic are the same for this version, except that we have to adopt the following modifications to the heterogeneous axioms:

$$(\forall\exists^{(\psi_{\alpha})})_{\alpha<\gamma} \mathbf{x_{\alpha}} \phi \vdash_{\mathbf{x}}\forall \mathbf{x_0} (\psi_0(\mathbf{x_0}) \to (\forall\exists^{(\psi_{\alpha})})_{0< \alpha<\gamma} \mathbf{x_{\alpha}} \phi)$$
$$(\exists\forall^{(\psi_{\alpha})})_{\alpha<\gamma} \mathbf{x_{\alpha}} \neg \phi \vdash_{\mathbf{x}}\exists \mathbf{x_0} (\psi_0(\mathbf{x_0}) \wedge (\exists\forall^{(\psi_{\alpha})})_{0<\alpha<\gamma} \mathbf{x_{\alpha}} \neg \phi)$$

This results in a more expressive language for which we can prove analogously the completeness theorems much as before, with respect to heterogeneous categories, Grothendieck toposes, set and Kripke models. Classically this type of quantification is expressible as a Vaught sentence (see \cite{vaught}), but when considering the intuitionistic case, the implication $\to$ ceases to be definable from the rest of the connectives, and this quantification becomes really a new type of expression in which the usage of $\to$ appears infinitely deep in the formula. 

\section{Acknowledgements}

This research has been supported through the grants P201/12/G028 and 19-00902S from the Grant Agency of the Czech Republic.

\bibliographystyle{amsalpha}

\renewcommand{\bibname}{References} 

\bibliography{references}



\end{document}